\theoremstyle{definition}
\theoremstyle{definition}
\theoremstyle{plain}
\theoremstyle{plain}
\theoremstyle{plain}
\theoremstyle{plain}
\theoremstyle{plain}
\newtheorem{thm}{Theorem}[subsection]
\theoremstyle{definition}
\theoremstyle{definition}
\theoremstyle{definition}
\theoremstyle{definition}
\newtheorem{defin}[thm]{Definition}
\theoremstyle{definition}
\newtheorem{rem}[thm]{Remark}
\theoremstyle{plain}
\newtheorem{prop}[thm]{Proposition}
\theoremstyle{plain}
\newtheorem{lem}[thm]{Lemma}
\theoremstyle{plain}
\newtheorem{cor}[thm]{Corollary}
\theoremstyle{definition}
\theoremstyle{definition}
\theoremstyle{definition}
\newtheorem{conje}[thm]{Conjecture}
\theoremstyle{definition}
\theoremstyle{definition}
\numberwithin{equation}{subsection}
\def\Z{\mathbb{Z}}
\def\F{\mathbb{F}}
\newcommand{\GL}{\operatorname{GL}}
\newcommand{\gA}{\operatorname{A}}
\newcommand{\gB}{\operatorname{B}}
\newcommand{\gC}{\operatorname{C}}
\newcommand{\gD}{\operatorname{D}}
\newcommand{\gH}{\operatorname{H}}
\newcommand{\gO}{\operatorname{O}}
\newcommand{\gT}{\operatorname{T}}
\newcommand{\gU}{\operatorname{U}}
\newcommand{\gV}{\operatorname{V}}
\newcommand{\gW}{\operatorname{W}}
\newcommand{\Aut}{\operatorname{Aut}}
\newcommand{\diag}{\operatorname{diag}}
\def\@seccntformat#1{\csname the#1\endcsname. }
\renewcommand\section{\@startsection {section}{1}{\z@}%
 {-3.5ex \@plus -1ex \@minus -.2ex}%
 {2.3ex \@plus.2ex}%
 {\normalfont\large\bfseries}}
\begin{document}

\title
{\bf Automorphism groups of superspecial curves \\of genus $4$ over $\F_{11}$}
\author
{Momonari Kudo\thanks{Institute of Mathematics for Industry, Kyushu University.
E-mail: \texttt{m-kudo@math.kyushu-u.ac.jp}},
\ Shushi Harashita\thanks{Graduate School of Environment and Information Sciences, Yokohama National University.
E-mail: \texttt{harasita@ynu.ac.jp}}
\ and Hayato Senda\thanks{Graduate School of Environment and Information Sciences, Yokohama National University.
E-mail: \texttt{senda-hayato-fw@ynu.jp}}}
\providecommand{\keywords}[1]{\textit{Key words:} #1}
\maketitle

\begin{abstract}
In this paper, we explicitly determine the automorphism group of
every nonhyperelliptic superspecial curve of genus $4$ over $\mathbb{F}_{11}$.
Our algorithm determining automorphism groups works for any nonhyperelliptic curves of genus $4$ over finite fields.
With this computation, we show the compatibility between the enumeration
of superspecial curves of genus $4$ over $\mathbb{F}_{11}$
obtained computationally by the first and second authors in 2017
and an enumeration by Galois cohomology theory.
\end{abstract}

\keywords{Algebraic curve, Superspecial curve, Automorphism

{\it 2010 \ MSC:} 13P10, 14G15, 14H37, 14H45, 14Q05, 68W30}

\if 0
\begin{keyword}
Algebraic curve \sep Superspecial curve \sep Automorphism

\MSC[2010] 13P10 \sep 14G15 \sep 14H37 \sep 14H45 \sep 14Q05 \sep 68W30
\end{keyword}
\fi


\section{Introduction}
For a nonsingular algebraic curve $C$
over a field $K$ of positive characteristic,
we call $C$ {\it superspecial} if its Jacobian is isomorphic to a product of supersingular elliptic curves over
the algebraic closure $\overline{K}$.
It is known that any superspecial curve over a prime field $\F_p$
has an $\F_{p^2}$-form which is a maximal curve over $\F_{p^2}$.
Here for a variety $X$ over a field $K$, 
a variety $Y$ over a field $E$ with $K \subset E \subset \overline K$
is called an {\it $E$-form of $X$}
if $X_{\overline K} \simeq Y_{\overline K}$ (cf. \cite[Chap. III, \S 1]{S}).
Automorphism groups of algebraic curves have been studied
as they are fundamental objects in the theory of algebraic geometry,
and also from the practical point of view
for instance they are necessary to be studied when we construct a secure cryptography from the Jacobian of an algebraic curve.

In \cite{KH17} and \cite{KH17a}, the first and second authors enumerated
nonhyperelliptic superspecial curves of genus $4$
over prime fields of characteristic $p\le 11$.
In this paper, restricting ourselves to the case of $p=11$,
we determine the explicit structure
of the automorphism group of each of them,
where automorphism groups are considered over
$\F_{11}$ and over its algebraic closure $\overline{\F_{11}}$.
The quadratic forms defining
our curves over $\F_{11}$ are divided into three types {\bf (N1)}, {\bf (N2)} and {\bf (Dege)}, see Subsection \ref{subsec:ortho}.
Our main results on the structures of the automorphism groups
over $\F_{11}$ are stated separately for these types:
Theorems \ref{theo:N1Aut}, \ref{theo:N2Aut} and  \ref{theo:DegenerateAut}.
The structures of the automorphism groups over $\overline{\F_{11}}$
are determined in Theorem \ref{theo:AlcAut}. Moreover
we give explicit generators of each automorphism group in
Propositions \ref{prop:N1Aut}, \ref{prop:N2Aut}, \ref{prop:DegeAut} and \ref{prop:ACAut} respectively.
Finally, from the results over $\overline{\F_{11}}$
(Theorem \ref{theo:AlcAut} and Proposition \ref{prop:ACAut}),
we show the compatibility between the enumeration in
\cite{KH17} and \cite{KH17a}
and an enumeration by Galois cohomology theory,
which strongly supports the truth of 
results in \cite{KH17} and \cite{KH17a}
obtained by the computational enumeration,
i.e., that there is no (crucial) bug
in the programs used in our computational proof.

For superspecial curves, here is an extra reason to study automorphism groups.
A fundamental question asks us
whether superspecial curves of a given type
(genus, characteristic, hyperelliptic or not, and so on)
exist or not, and if they exist, then enumerate them.
Determining the automorphism groups contributes to solving these problems.
Recall the mass formula,
which counts principally polarized superspecial abelian varieties with weight by
the orders of automorphisms of them:
\begin{equation}\label{MassFormula}
M_g(p) := \sum_{(A,\Theta)}\frac{1}{\sharp \Aut(A,\Theta)} = \prod_{i=1}^g\frac{(2i-1)!\zeta(2i)}{(2\pi)^{2i}}\prod_{i=1}^g(p^i+(-1)^i),
\end{equation}
where $(A,\Theta)$ runs through isomorphism classes of principally polarized abelian varieties. It is much easier to get the mass than to get the exact number
of the isomorphism classes of principally polarized superspecial abelian varieties.

Let $J(C)$ denote the Jacobian variety with principal polarization
associated to a nonsingular curve $C$.
It is known that $\Aut(C)\simeq \Aut(J(C))$
if $C$ is hyperelliptic and $\Aut(C)\times\{\pm 1\}\simeq \Aut(J(C))$ otherwise.
As the polarizations of $J(C)$ for nonsingular curves $C$ are indecomposable,
we are interested in the indecomposable part
$M'_g(p)$ of $M_g(p)$, which is the sum  \eqref{MassFormula} over 
superspecial abelian varieties with indecomposable principal polarization.
The value $M'_4(p)$ for $g=4$ is computed as
\[
M_4'(p) = M_4(p) - M_3(p)M_1(p) - \frac{1}{2} M_2(p)^2 + M_2(p)M_1(p)^2-\frac{1}{4}M_1(p)^4
\]
by removing the contribution of decomposable ones from $M_4(p)$.
This is done by the same way as in \cite[Lemma 2.2]{Ibukiyama},
but we omit the detail as this is not the main theme of this paper.

The mass from our curves ($C_1,\ldots,C_9$ in Theorem \ref{theo:AlcAut}) is $5/8$.
This amount looks small, compared
with the whole mass $M_4(11)=8485039/497664$ and
also with $M'_4(11) = 1395421/82944$.
This means that there exist
superspecial curves of other types
(hyperelliptic or not descendible to $\F_{11}$)
or (principally polarized) superspecial abelian varieties
which are not obtained as Jacobians.
As a future work,
it would be interesting to know how much of $M_4(11)$ each of them occupies,
in particular the case of $g=4$ and $p=11$ of more general conjecture:
\begin{conje}
Let $p$ be an arbitrary prime number.
If $g\ge 4$, then there would exist
a superspecial abelian variety of dimension $g$
in characteristic $p$
with indecomposable principal polarization
which is not the Jacobian variety of any nonsingular curve.
\end{conje}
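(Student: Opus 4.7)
The plan is to reduce the conjecture to a quantitative comparison of masses. Let $M'_g(p)$ denote the mass of indecomposably principally polarized superspecial abelian varieties of dimension $g$ (defined by the obvious analogue of the inclusion-exclusion formula given for $M'_4(p)$ in the text), and let $M^{\mathrm{Jac}}_g(p)\le M'_g(p)$ be the Jacobian sub-mass. Since any strict inequality $M^{\mathrm{Jac}}_g(p)<M'_g(p)$ immediately produces an indecomposably polarized superspecial abelian variety outside the Torelli locus, it suffices to upper-bound the Jacobian contribution and to lower-bound $M'_g(p)$ enough to force strict inequality.

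For the Jacobian side, using $\Aut(J(C))\simeq \Aut(C)\times\{\pm 1\}$ when $C$ is nonhyperelliptic and $\Aut(J(C))\simeq\Aut(C)$ when $C$ is hyperelliptic,
\[
 M^{\mathrm{Jac}}_g(p) \;\le\; \sum_{[C]} \frac{1}{|\Aut(C)|} \;\le\; N_g(p),
\]
where $[C]$ runs over isomorphism classes of superspecial curves of genus $g$ over $\overline{\F_p}$ and $N_g(p)$ is this finite number. For the lower bound, the inclusion-exclusion producing $M'_4(p)$ in the text extends to every $g$, and since each subtracted term has strictly smaller degree in $p$ than $M_g(p)$, one obtains $M'_g(p)\sim c_g\,p^{g(g+1)/2}$ with an explicit $c_g>0$ read off from \eqref{MassFormula}. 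The standard fact that every superspecial curve descends to $\F_{p^2}$, combined with the Weil bound $|M_g(\F_{p^2})|=O_g(p^{6g-6})$ on the coarse moduli space, yields $N_g(p)=O_g(p^{6g-6})$. Since $g(g+1)/2>6g-6$ exactly when $g\ge 10$, this comparison settles the conjecture for $g\ge 10$ once $p$ exceeds an explicit threshold.

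The main obstacle is the range $4\le g\le 9$, together with the finitely many small primes left over for $g\ge 10$. Two complementary refinements present themselves. First, sharpen the bound on $N_g(p)$: the Weil estimate ignores that the superspecial locus is $0$-dimensional, and a refined argument via Ekedahl--Oort or Newton stratifications, or a direct bound of $N_g(p)$ in terms of $M_g(p)$ and $\min|\Aut|$, should lower the exponent well below $g(g+1)/2$. Second, pursue a geometric attack: for each remaining pair $(g,p)$, exhibit an indecomposably polarized superspecial abelian variety lying in a Newton or Ekedahl--Oort stratum of $\mathcal{A}_g$ provably disjoint from the Torelli image. The case $(g,p)=(4,11)$ of the present paper already provides both concrete evidence---the gap $M'_4(11)-5/8$ is very large---and an algorithmic template: the $\Aut(C)$ computation developed here determines $M^{\mathrm{Jac}}_g(p)$ exactly whenever a complete enumeration of superspecial curves of genus $g$ is available.
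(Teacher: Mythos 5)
The statement you are attempting to prove is a \emph{Conjecture} in the paper; the authors explicitly propose it and do not prove it. Their only supporting evidence is the computational verification for $g=4$ and $p\le 7$ (comparing $M_g'(p)$ with the Jacobian sub-mass $M_g^{\mathrm{curve}}(p)$, as tabulated just after the conjecture statement) together with the expectation that the gap grows with $g$ and $p$. So there is no ``paper's own proof'' to compare against, and any review must assess your proposal on its own terms.

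Your proposal is, correctly, not a proof either, and you say so: the asymptotic comparison $M_g'(p)\sim c_g\,p^{g(g+1)/2}$ versus $N_g(p)=O_g(p^{6g-6})$ only becomes decisive when $g(g+1)/2 > 6g-6$, i.e., $g\ge 10$, and even then only for $p$ above an unspecified threshold, leaving $4\le g\le 9$ and all small primes untouched. This is exactly the content the paper also cannot reach, which is why the statement appears as a conjecture. Two additional cautions are worth naming. First, your bound $N_g(p)=O_g(p^{6g-6})$ requires that every superspecial curve of genus $g$ has a model over $\F_{p^2}$ and that you can compare $\overline{\F_p}$-isomorphism classes to $\F_{p^2}$-points of the \emph{coarse} moduli space; both steps are defensible but need to be spelled out (the descent statement is classical for superspecial abelian varieties, but the coarse-moduli count conflates $\F_{p^2}$-rational points with geometric isomorphism classes, and the implied constant in Lang--Weil is not uniform without further argument). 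Second, your suggested refinements via Ekedahl--Oort or Newton stratifications are reasonable research directions, but as written they are placeholders, not steps. In short: the mass-comparison framework you set up is the right one and matches the heuristic underlying the paper's conjecture, and your asymptotic observation for $g\ge 10$ is a genuine (if partial) addition; but what you have is a program, not a proof, and the honest conclusion is that the conjecture remains open in general, with the paper's computations settling only finitely many $(g,p)$.
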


We propose this conjecture,
as this is true for $g=4$ and $p\le 7$ (cf. \cite{KH16}),
comparing $M'_g(p)$
and the sum $M_g^{\text{curve}}(p)$ of $1/|\Aut(J(C))|$ for superspecial nonsingular curves $C$ of genus $g$ over $\overline{\F_p}$, and is more likely to hold for larger $g$ and $p$. 
\begin{table}[htb]
\begin{center}
  \begin{tabular}{|l||c|c|c|c|c|} \hline
$p$ & $2$ & $3$ & $5$ & $7$ & $11$ \\ \hline\hline
$M'_4(p)$ & $\frac{1}{3317760}$ & $\frac{1}{46080}$ & $\frac{539}{103680}$ & $\frac{173}{1024}$ & $\frac{1395421}{82944}$\rule[-2.4mm]{0mm}{7mm}\\ \hline
$M_4^{\text{curve}}(p)$  & $0$ & $0$ & $\frac{1}{720}$ & $0$ & $\ge \frac{5}{8}$\rule[-2.4mm]{0mm}{7mm} \\ \hline
  \end{tabular}
\end{center}
\end{table}

Let us give an overview of this paper.
In Section 2, we review some facts shown in \cite{KH16} and \cite{KH17}
on superspecial curves of genus 4
and on their automorphisms,
and present an algorithm to determine the points of the variety
defined by a zero-dimensional ideal.
In Section 3 we describe an algorithm determining
the automorphism group of a given nonhyperelliptic curve of genus 4.
In Section 4, our main results are stated with their proofs.
In Section 5, we show that the enumeration
in \cite{KH17} and \cite{KH17a} with our computation of automorphisms
is compatible with that by Galois cohomology theory 
together with our results over an algebraically closed field.

\subsection*{Acknowledgments}
The authors thank Professor Tomoyoshi Ibukiyama for his valuable comments.
This work was supported by
JSPS Grant-in-Aid for Scientific Research (C) 17K05196.
\section{Preliminaries}\label{section2}

In this section, we give a brief review of previous results on the enumeration of (nonhyperelliptic) superspecial curves of genus $4$ over $\F_{11}$.
Moreover we shall introduce computational techniques for solving a system of multivariate polynomial equations via the Gr\"{o}bner basis computation.
The computational techniques described in this section will be used to prove our main results in this paper.

\subsection{Superspecial curves}\label{subsec:ssp}

Let $K$ be a perfect field with $\mathrm{char}(K)=p>0$, and let $\overline{K}$ denote its algebraic closure.
A curve $C$ of genus $g$ over $K$ is said to be {\it superspecial} if its Jacobian $J (C)$ is isomorphic to $E^g$ over $\overline{K}$ for a supersingular elliptic curve $E$.
Here is a classical problem: Given $g$ and $K$, determine the (non-)existence of a superspecial curve of genus $g$ over $K$.
Moreover if such a curve exists, enumerate all $K$-isomorphism classes and all $\overline{K}$-isomorphism classes of superspecial curves of genus $g$ over $K$.

In the case of $g=4$ and $K = \mathbb{F}_{11}$, the $K$-isomorphism classes and the $\overline{K}$-isomorphism classes of all nonhyperelliptic superspecial curves of genus $4$ over $K$ are determined as follows:

\begin{thm}[\cite{KH17}, Theorem B]\label{thm:KH17overF11}
There exist precisely $30$ nonhyperelliptic superspecial curves of genus $4$ over $\mathbb{F}_{11}$ up to isomorphism over $\mathbb{F}_{11}$. 
The thirty isomorphism classes are given by
\begin{description}
	\item[{\bf (N1)}] $V(Q^{\rm (N1)}, P_i^{{\rm (N1)}})$ with $Q^{\rm (N1)}=2 x w + 2 y z$ for $1 \leq i \leq 8$,
	\item[{\bf (N2)}] $V(Q^{\rm (N2)}, P_j^{\rm (N2)})$ with $Q^{\rm (N2)}=2 x w + y^2 - \epsilon z^2$ for $\epsilon \in \mathbb{F}_{11}^{\times} \smallsetminus (\mathbb{F}_{11}^{\times})^2$ and for $1 \leq j \leq 5$, and
	\item[{\bf (Dege)}] $V(Q^{\rm (Dege)},P_k^{{\rm (Dege)}})$ with $Q^{\rm (Dege)}= 2 y w + z^2$ for $1 \leq k \leq 17$.
\end{description}
Here each of $P_i^{\rm (N1)}$, $P_j^{\rm (N2)}$ and $P_k^{\rm (Dege)}$ is a cubic form in $\mathbb{F}_{11}[x,y,z,w]$, and is given at the beginning of {\rm Section \ref{section4}}.
\end{thm}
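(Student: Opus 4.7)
The plan is to exploit the canonical embedding: any nonhyperelliptic curve $C$ of genus $4$ embeds canonically in $\P^3$ as the complete intersection $V(q)\cap V(p)$ of a unique quadric surface $V(q)$ and a cubic surface $V(p)$ (the cubic being unique modulo $(q)$). The first step is to classify $q$ up to the action of $\PGL_4(\F_{11})$. A smooth canonical curve of genus $4$ lies on a quadric of rank $\ge 3$, and over $\F_{11}$ such quadrics fall into exactly three $\PGL_4$-orbits: the two rank-$4$ classes (split versus non-split, distinguished by whether the discriminant is a square in $\F_{11}^{\times}/(\F_{11}^{\times})^2$), represented by $Q^{\mathrm{(N1)}}$ and $Q^{\mathrm{(N2)}}$, and the unique rank-$3$ class represented by $Q^{\mathrm{(Dege)}}$. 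So one may assume $q$ is one of these three forms.

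For each fixed $q$, the next step is to translate superspeciality of $C=V(q,p)$ into explicit polynomial conditions on the coefficients of the generic cubic $p=\sum_{|\alpha|=3} a_\alpha x^\alpha\in\F_{11}[x,y,z,w]$. Via the Hasse--Witt formula for complete intersections in $\P^3$, $C$ is superspecial if and only if the $4\times 4$ Hasse--Witt matrix of $C$, whose entries are specific coefficients in the monomial expansion of $(qp)^{p-1}=(qp)^{10}$, vanishes identically. This produces an explicit polynomial system in the $a_\alpha$ over $\F_{11}$. One then cuts down the number of free parameters by exploiting the stabilizer $G_q\subset\PGL_4(\F_{11})$ of $q$: by choosing a normal form for $p$ modulo $G_q$ and modulo the ideal $(q)\cdot(x,y,z,w)$, one obtains a slice on which distinct $\F_{11}$-points correspond (after further group-action bookkeeping) to distinct $\F_{11}$-isomorphism classes.

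The third step is to enumerate all solutions of the resulting zero-dimensional ideal. This is done by computing a Gr\"obner basis with respect to a lex order over $\F_{11}$ and reading off the finitely many $\F_{11}$-points by back-substitution, using exactly the algorithm reviewed in Section \ref{section2}. For each solution the nonsingularity of $V(q,p)$ is verified by the Jacobian criterion in $\P^3$, and the residual $G_q$-action is then used to collapse the list of smooth superspecial models into $\F_{11}$-isomorphism classes. The target conclusion is that this yields $8$, $5$, and $17$ classes in types \textbf{(N1)}, \textbf{(N2)}, \textbf{(Dege)} respectively, hence $30$ in total.

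The main obstacle is computational rather than conceptual. The Hasse--Witt condition lives inside the degree-$40$ expansion $(qp)^{10}$, so writing down the defining equations already requires careful monomial bookkeeping, and a direct Gr\"obner basis calculation in more than a dozen variables over $\F_{11}$ is on the edge of feasibility without splitting into subcases (for instance, by branching on vanishing versus nonvanishing of specific coefficients of $p$ to trigger further normalizations under $G_q$). A secondary subtlety is to certify that no two candidate models in different normal-form strata are $\F_{11}$-isomorphic: the type of $C$ is determined by the $\PGL_4(\F_{11})$-orbit of its unique quadric $q$, so the three families are automatically disjoint, but one still has to check that within each family the remaining $G_q$-bookkeeping is exhaustive and non-redundant.
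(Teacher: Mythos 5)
This statement is imported from the earlier paper \cite{KH17} (``Theorem B'') and is not proved in the present paper at all; the present paper cites it as a known result and then builds on it. So there is no proof here for your outline to be compared against directly.

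That said, your sketch does reconstruct the overall strategy that \cite{KH17} is known (from Section~\ref{section2} and the companion reference \cite{KH16}) to follow: canonical embedding of a nonhyperelliptic genus-$4$ curve as a $(2,3)$-complete intersection in $\P^3$; classification of the unique quadric $q$ into the three $\PGL_4(\F_{11})$-orbits giving types \textbf{(N1)}, \textbf{(N2)}, \textbf{(Dege)}; translating superspeciality into vanishing of the $4\times 4$ Hasse--Witt matrix, whose entries are extracted from the expansion of $(qp)^{p-1}$; and enumeration of the resulting zero-dimensional systems by Gr\"obner basis methods with a stabilizer reduction. This is indeed the method of the cited paper. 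The part you leave as ``group-action bookkeeping'' --- choosing a slice in the space of cubics modulo $(q)$ and $\mathrm{Stab}(q)$, branching on vanishing/nonvanishing of coefficients to keep the Gr\"obner systems tractable, and certifying that the final list of $30$ has no omissions or duplications up to $\F_{11}$-isomorphism --- is precisely where the real work lies, and you acknowledge it without giving a concrete reduction. Since the statement is quoted here without proof, a faithful review can only say: your reconstruction matches the cited approach at the level of strategy, but it is not a proof, and in any case this paper never intended to supply one.
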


\begin{cor}\label{cor:KH17overF11}
Any nonhyperelliptic superspecial curve of genus $4$ over $\mathbb{F}_{11}$ is isomorphic over $\overline{\mathbb{F}_{11}}$ to one of the curves $V(Q^{{\rm (N1)}}, P_i^{({\rm alc})})$ for $1 \leq i \leq 3$, or $V(Q^{({\rm Dege})}, P_j^{({\rm alc})})$ for $4 \leq j \leq 9$, where $Q^{{\rm (N1)}} := 2 x w + 2 y z$, $Q^{({\rm Dege})}:= 2 yw + z^2$ and
\begin{eqnarray}
P_1^{(\mathrm{alc})}&:=& x^2 y + x^2 z + 2 y^2 z + 5 y^2 w + 9 y z^2 + y z w + 4 z^3 + 3 z^2 w + 10 z w^2 + w^3, \nonumber \\
P_2^{(\mathrm{alc})} &:=& x^2 y + x^2 z + y^3 + y^2 z + 7 y z^2 + 4 y w^2 + 2 z^3 + 9 z w^2, \nonumber \\
P_3^{(\mathrm{alc})} &:=& x^2 y + x^2 z + y^3 + 8 y^2 z + 3 y z^2 + 10 y w^2 + 10 z^3 + 10 z w^2, \nonumber \\
P_4^{(\mathrm{alc})} &:=& x^3 + y^3 + w^3, \nonumber \\
P_5^{(\mathrm{alc})} &:=& x^3 + y^3 + z^3 + 5 w^3, \nonumber \\
P_6^{(\mathrm{alc})} &:=& x^3 + x w^2 + y^3, \nonumber \\
P_7^{(\mathrm{alc})} &:=& x^3 + x z w + y^3 + 7 z^3 + w^3, \nonumber \\
P_8^{(\mathrm{alc})} &:=& x^3 + x y z + x w^2 + y^3 + 5 z^3 + 4 w^3, \nonumber \\
P_9^{(\mathrm{alc})} &:=& x^3 + x y z + 6 x w^2 + y^3 + 8 z^3 + 8 w^3. \nonumber 
\end{eqnarray}
\end{cor}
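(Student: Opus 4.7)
By Theorem \ref{thm:KH17overF11}, every nonhyperelliptic superspecial curve of genus $4$ over $\mathbb{F}_{11}$ is $\mathbb{F}_{11}$-isomorphic, hence in particular $\overline{\mathbb{F}_{11}}$-isomorphic, to one of the thirty listed curves $V(Q,P)$. The plan is to identify which of these thirty become isomorphic after base change to $\overline{\mathbb{F}_{11}}$ and to select one representative in each resulting $\overline{\mathbb{F}_{11}}$-orbit. A first reduction comes from the fact that the canonical embedding of a nonhyperelliptic curve of genus $4$ lies on a unique quadric in $\mathbb{P}^3$, so the rank of that quadric is an $\overline{\mathbb{F}_{11}}$-isomorphism invariant. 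Hence the seventeen curves of type $(\mathrm{Dege})$ (rank $3$) cannot be $\overline{\mathbb{F}_{11}}$-isomorphic to the thirteen curves of types $(\mathrm{N1})$ and $(\mathrm{N2})$ (rank $4$), and the two groups may be handled independently.

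For the nondegenerate case, over $\overline{\mathbb{F}_{11}}$ the nonsquare $\epsilon$ becomes a square, so there is an explicit linear change of coordinates turning $Q^{(\mathrm{N2})} = 2xw + y^2 - \epsilon z^2$ into $Q^{(\mathrm{N1})} = 2xw + 2yz$ (up to scalar). Applying this substitution to each $P_j^{(\mathrm{N2})}$ and reducing modulo $Q^{(\mathrm{N1})}$ yields five cubics over $\overline{\mathbb{F}_{11}}$; together with $P_1^{(\mathrm{N1})}, \ldots, P_8^{(\mathrm{N1})}$ one then has thirteen cubics all paired with the single quadric $Q^{(\mathrm{N1})}$. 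Two such curves $V(Q^{(\mathrm{N1})}, P)$ and $V(Q^{(\mathrm{N1})}, P')$ are $\overline{\mathbb{F}_{11}}$-isomorphic if and only if there exists $\sigma \in \mathrm{PGL}_4(\overline{\mathbb{F}_{11}})$ with $\sigma^{\ast}Q^{(\mathrm{N1})} = \lambda\, Q^{(\mathrm{N1})}$ and $\sigma^{\ast}P \equiv \mu\, P' \pmod{Q^{(\mathrm{N1})}}$ for some $\lambda,\mu \in \overline{\mathbb{F}_{11}}^{\times}$. I would encode this as a polynomial ideal in the entries of $\sigma$ and test its solvability using the Gr\"obner basis methods of Section \ref{section2}. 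Performing all pairwise tests should partition the thirteen cubics into three orbits, represented by $P_1^{(\mathrm{alc})}, P_2^{(\mathrm{alc})}, P_3^{(\mathrm{alc})}$.

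The same procedure is then applied to the seventeen cubics of type $(\mathrm{Dege})$, all sharing the common quadric $Q^{(\mathrm{Dege})} = 2yw + z^2$, and should collapse them into six $\overline{\mathbb{F}_{11}}$-orbits with representatives $P_4^{(\mathrm{alc})}, \ldots, P_9^{(\mathrm{alc})}$. A final round of the same isomorphism test, now run between the nine chosen representatives, confirms they are pairwise non-isomorphic over $\overline{\mathbb{F}_{11}}$ and yields the claimed count.

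The main obstacle will be computational cost: each pairwise test requires eliminating the sixteen entries of the matrix representing $\sigma$ (together with the scalars $\lambda,\mu$) from a large polynomial system, and many such pairs must be checked. To keep the Gr\"obner bases tractable I would exploit the explicit structure of the stabilizer of $Q$ in $\mathrm{PGL}_4$ --- an orthogonal-type group in the nondegenerate case and a parabolic-type subgroup in the degenerate case --- to fix a convenient parametrization and cut down the number of free variables before elimination.
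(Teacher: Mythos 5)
The paper states this corollary without proof, relying implicitly on the enumeration carried out in \cite{KH17} where both the $\mathbb{F}_{11}$- and $\overline{\mathbb{F}_{11}}$-isomorphism classes were determined computationally. Your reconstruction is a sound account of how that classification would be obtained: starting from the $30$ curves of Theorem \ref{thm:KH17overF11}, separating the degenerate and nondegenerate quadrics via the rank of the unique quadric through the canonical model (a geometric invariant preserved under base change), merging the types \textbf{(N1)} and \textbf{(N2)} once $\epsilon$ becomes a square over $\overline{\mathbb{F}_{11}}$, and then running pairwise Gr\"obner-basis isomorphism tests of exactly the kind developed in Sections \ref{section2} and \ref{section3} --- the parametrization via the stabilizer of $Q$ (the orthogonal similitude group or the parabolic in the degenerate case) being precisely what keeps the elimination tractable. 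This matches the method and spirit of the cited source, so I regard the proposal as correct and essentially the same approach; the only caveat is that, like the paper, you defer the actual pairwise computations to machine verification rather than exhibiting the explicit transformations that fuse the thirteen rank-$4$ curves into three $\overline{\mathbb{F}_{11}}$-classes and the seventeen rank-$3$ curves into six.
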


\subsection{Orthogonal groups}\label{subsec:ortho}

Let $p$ be an odd prime number, and $K$ a (perfect) field of characteristic $p$.
Let $Q$ be a quadratic form over $K$ in $x$, $y$, $z$ and $w$.
We denote by $\varphi$ the coefficient matrix of $Q$.
The orthogonal group of the quadratic form $Q$ is defined as follows:
\[ 
\gO_{\varphi}(K) := \{\, g \in \GL_{n}(K) \mid {}^{t}g \varphi g = \varphi \,\} , 
\]
where $\mathrm{GL}_n (K)$ denotes the general linear group of degree $n$ over $K$.
In addition, we define the orthogonal similitude group of the quadratic form $Q$ as follows:
\[ 
\tilde{\gO}_{\varphi}(K) := \{\, g \in \GL_{n}(K) \mid {}^{t}g \varphi g = \mu\varphi \mbox{ for some } \mu \in K^{\times} \,\}. 
\]
The quadratic forms defining nonhyperelliptic curves of genus $4$ over $K$ are classified into the following three types: {\bf (N1)} $Q^{({\rm N1})}=2 x w + 2 y z$, {\bf (N2)} $Q^{({\rm N2})} = 2 x w + y^2 - \epsilon z^2$ with $\epsilon \in K^{\times} \smallsetminus (K^{\times})^2$, and {\bf (Dege)} $Q^{({\rm Dege})} = 2 y w + z^2$ (cf. \cite[Remark 2.1.1]{KH16}). 
To compute the automorphism group of a (nonhyperelliptic) curve of genus $4$, we will give the Bruhat decomposition of the orthogonal group of each quadratic form.
Using the decomposition, computing automorphisms of a curve is reduced into solving a system of multivariate algebraic equations.

\subsubsection{The orthogonal group in (N1) case}\label{subsec:orthoN1}

The quadratic form $Q^{({\rm N1})}=2 x w + 2 y z$ is non-degenerate and its coefficient matrix is
\[
\begin{pmatrix}
0&0&0&1\\
0&0&1&0\\
0&1&0&0\\
1&0&0&0\\
\end{pmatrix}.
\]
By \cite[Subsection 4.1]{KH16}, we have the following decompositions:
\[
\gO_{\varphi}(K)=\gB \gW \gU \quad \text{and} \quad \tilde\gO_{\varphi}(K)=\tilde \gB \gW \gU
\]
with $\gB = \gA \gT \gU$ and $\tilde \gB = \gA \tilde \gT \gU$, where $\gA$, $\gT$, $\tilde\gT$, $\gW$ and $\gU$ are given as follows.
We set $\gT = \{ \diag(a,b,b^{-1},a^{-1}) \mid a,b \in K^\times\}$ , $\tilde \gT = \{\diag(a,b,cb^{-1},ca^{-1}) \mid a, b, c \in K^\times \}$,
\[
\gU = \left\{ \left. 
\begin{pmatrix}
1&a&0&0\\
0&1&0&0\\
0&0&1&-a\\
0&0&0&1
\end{pmatrix}
\begin{pmatrix}
1&0&b&0\\
0&1&0&-b\\
0&0&1&0\\
0&0&0&1
\end{pmatrix}
\right| a,b \in K \right\},\quad \gA = \left\{1_4, 
\begin{pmatrix}
1&0&0&0\\
0&0&1&0\\
0&1&0&0\\
0&0&0&1
\end{pmatrix}\right\}
\]
and $\gW=\{1_4, s_1, s_2, s_1 s_2 \}$ with
\[
s_1 = \begin{pmatrix}
0&1&0&0\\
1&0&0&0\\
0&0&0&1\\
0&0&1&0
\end{pmatrix}, \ \mbox{ and }
s_2 = \begin{pmatrix}
0&0&1&0\\
0&0&0&1\\
1&0&0&0\\
0&1&0&0
\end{pmatrix}.
\]

\subsubsection{The orthogonal group in (N2) case}\label{subsec:orthoN2}

The quadratic form $Q^{({\rm N2})} = 2 x w + y^2 - \epsilon z^2$ with $\epsilon \in K^{\times} \smallsetminus (K^{\times})^2$ is non-degenerate and its coefficient matrix is
\[
\begin{pmatrix}
0&0&0&1\\
0&1&0&0\\
0&0&-\epsilon &0\\
1&0&0&0
\end{pmatrix}. 
\]
By \cite[Subsection 4.1]{KH16}, we have the following decompositions:
\[
\gO_{\varphi}(K) = \gB \gW \gU \quad \text{and} \quad \tilde\gO_{\varphi}(K) = \tilde \gB \gW \gU
\]
with $\gB = \gA \gT \gU$ and $\tilde \gB = \gA \tilde \gT \gU$, where $\gA$, $\gT$, $\tilde\gT$, $\gW$ and $\gU$ are given as follows.
We set
$\gA =\{1_4, \diag(1,1,-1,1) \}$,
\[
\gU = \left\{\left.
\begin{pmatrix}
1 & a & 0 & -a^2/2\\
0 & 1 & 0 & -a\\
0 & 0 & 1 & 0\\
0 & 0 & 0 & 1
\end{pmatrix}
\begin{pmatrix}
1 & 0 & b & b^2/(2\epsilon )\\
0 & 1 & 0 & 0\\
0 & 0 & 1 & b/ \epsilon \\
0 & 0 & 0 & 1
\end{pmatrix}
\right| a,b \in K
\right\},
\]
\[
\gW = \left\{ 1_4, 
\begin{pmatrix}
0 & 0 & 0 & 1\\
0 & 1 & 0 & 0\\
0 & 0 & -1 & 0\\
1 & 0 & 0 & 0
\end{pmatrix}\right\},\quad
\tilde \gC=\left\{\left.
R(a,b):=
\begin{pmatrix}
1 & 0 & 0 & 0\\
0 & a & \epsilon b & 0\\
0 & b & a & 0\\
0 & 0 & 0 & a^2-\epsilon b^2
\end{pmatrix} \right|
\begin{matrix}
a,b \in K,\\
a^2 -\epsilon b^2 \ne 0
\end{matrix}
\right\}.
\]
Put $\gC = \{ R(a,b) \in \tilde \gC \mid a^2 - \epsilon b^2 = 1 \}$, $\gT = \gH \gC$ and $\tilde \gT= \gH\tilde \gC$, where $\gH = \{ \diag(a,1,1,a^{-1}) \mid a \in K^\times \}$.

\subsubsection{The orthogonal group in (Dege) case}\label{subsec:orthoDege}
The quadratic form $Q^{({\rm Dege})} = 2 y w + z^2$ is degenerate and its coefficient matrix is
\[
\begin{pmatrix}
0&0&0&0\\
0&0&0&1\\
0&0&1 &0 \\
0&1&0&0
\end{pmatrix}. 
\]
By \cite[Lemma 4.2.1]{KH16}, we have the following decompositions:
\[
\gO_{\varphi}(K) = (\gB \sqcup \gB M_{\rm W} \gU) \gV \quad \text{and} \quad 
\tilde \gO_{\varphi}(K) = (\tilde \gB \sqcup \tilde \gB M_{\rm W} \gU) \gV
\]
with $\gB = \gA \gT \gU$ and $\tilde \gB = \gA \tilde \gT \gU$, where $\gA = \{1_4, \diag(1,1,-1,1) \}$,
\[
\gT = \left\{\left. T(a):=
\begin{pmatrix}
1&0&0&0\\
0&a&0&0\\
0&0& 1&0\\
0&0&0&a^{-1}
\end{pmatrix} \right| a \in K^\times \right\},\ 
\gU = \left\{\left. U(a):=
\begin{pmatrix}
1&0&0&0\\
0&1&a&-a^2/2\\
0&0&1&-a\\
0&0&0&1
\end{pmatrix} \right| a \in K \right\},
\]
\[
M_{\rm W} =
\begin{pmatrix}
1&0&0&0\\
0&0&0&1\\
0&0&1&0\\
0&1&0&0
\end{pmatrix},\ 
\gV = \left\{\left.
\begin{pmatrix}
a&b&c&d\\
0&1&0&0\\
0&0&1&0\\
0&0&0&1
\end{pmatrix} \right| a \in K^\times \text{ and } b, c, d \in K \right\}
\]
and $\tilde \gT = \{ \diag(1,b,b,b) \mid b \in K^\times \} \gT$.

\subsection{Solving a system of polynomial equations of zero dimension}\label{subsec:solving}

In this subsection, we introduce methods for computing the points of the variety defined by a zero-dimensional ideal.
The computational methods given in this subsection shall be used to prove our main results in Section \ref{section4}.

Let $K$ be a field, and let $S:= K [X_1, \ldots , X_n]$ denote the polynomial ring of $n$ variables over $K$.
Put $S_{\overline{K}} := \overline{K} [ X_1, \ldots , X_n ]$.
For a subset $T \subset S$ and an extension field $L$ of $K$, we denote by $V_{L}(T)$ the zero-locus of $T$ over $L$, that is,
\[
V_{L} (T) = \{ (a_1, \ldots , a_n) \in L^{n} \mid f (a_1, \ldots , a_n) = 0 \mbox{ for all } f \in T \} .
\]
For an ideal $I \subset S$, put $R = S / I$, $I_{\overline{K}}:= \langle I \rangle_{S_{\overline{K}}}$, and $R_{\overline{K}}:=  S_{\overline{K}} / I_{\overline{K}} $.
Let $G$ be a Gr\"{o}bner basis of $I$ with respect to a term order $\succ$ on $X_1, \ldots , X_n$.
We set
\[
L (I, G) = \{ X \in \mathcal{M}_S \mid \mathrm{LM}_{\succ} (g) \nmid X \mbox{ for all } g \in G \} ,
\]
where $\mathcal{M}_S$ denotes the set of monomials in $S$, and $ \mathrm{LM}_{\succ} (g)$ denotes the leading monomial of $g$ with respect to $\succ$.
By the definition of the normal form, the set $L (I, G)$ gives rise to bases of the $K$-vector space $R=K[X_1, \ldots , X_n]/I$ and the $\overline{K}$-vector space $R_{\overline{K}}=\overline{K}[X_1, \ldots , X_n]/  I_{\overline{K}}$ with $I_{\overline{K}} = \langle I \rangle_{S_{\overline{K}}}$.

We first define a zero-dimensional ideal.

\begin{defin}[Zero-dimensional ideals]
An ideal $I \subset S=K [ X_1, \ldots , X_n ]$ is said to be {\it zero-dimensional} if $\# V_{\overline{K}} ( I) < \infty$.
\end{defin}

\begin{lem}\label{lem:elim2}
Let $I$ be an ideal in $S=K [ X_1, \ldots , X_n ]$, and $\succ$ a term order on $X_1, \ldots , X_n$.
Let $G$ be a Gr\"{o}bner basis for $I$ with respect to $\succ$.
Put $S_{\overline{K}} := \overline{K} [ X_1, \ldots , X_n ]$, $I_{\overline{K}}:= \langle I \rangle_{S_{\overline{K}}}$, and $R_{\overline{K}}:=  S_{\overline{K}} / I_{\overline{K}} $.
The following are equivalent:
\begin{enumerate}
\item[$(1)$] $\mathrm{dim}_{\overline{K}} R_{\overline{K}} < \infty$.
\item[$(2)$] For each $1 \leq i \leq n$, there exists $g_i \in G$ such that $\mathrm{LM}_{\succ} (g_i) = X_i^{k_i}$ for some $k_i \geq 0$.
\end{enumerate}
In particular, we have $I \cap K [ X_{i+1}, \ldots , X_{n} ] \neq \{ 0 \}$ for $0 \leq i \leq n-1$ by taking $\succ$ to be the lexicographical $($lex$)$ order with $X_1 \succ \cdots \succ X_n$.
\end{lem}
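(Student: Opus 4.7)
The plan is to argue via the well-known basis property of the set $L(I,G)$. First I would recall the standard fact (Macaulay's theorem, a consequence of the division algorithm relative to $G$) that $L(I,G)$ descends to a $K$-basis of $R=S/I$ and, since $G$ remains a Gr\"obner basis of $I_{\overline{K}}$ with respect to $\succ$, also to an $\overline{K}$-basis of $R_{\overline{K}}$. In particular, $\dim_{\overline{K}} R_{\overline{K}}=\#L(I,G)$.

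For the implication $(2)\Rightarrow(1)$, I would observe that if each $g_i\in G$ satisfies $\mathrm{LM}_\succ(g_i)=X_i^{k_i}$, then every monomial $X_1^{e_1}\cdots X_n^{e_n}\in L(I,G)$ must satisfy $e_i<k_i$ for each $i$; hence $\#L(I,G)\le \prod_{i=1}^n k_i<\infty$. Conversely, for $(1)\Rightarrow(2)$, assuming $\dim_{\overline{K}} R_{\overline{K}}<\infty$ the set $L(I,G)$ is finite, so for each $i$ the infinite family $\{1,X_i,X_i^2,\ldots\}$ cannot lie entirely in $L(I,G)$. Thus some $X_i^k$ is divisible by $\mathrm{LM}_\succ(g)$ for a suitable $g\in G$; but every divisor of $X_i^k$ is of the form $X_i^{k_i}$, which gives the required $g_i$.

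For the ``in particular'' assertion, I would now specialize $\succ$ to the lex order with $X_1\succ\cdots\succ X_n$. By the equivalence just proved there exists, for each $1\le i\le n-1$, an element $g_{i+1}\in G$ with $\mathrm{LM}_\succ(g_{i+1})=X_{i+1}^{k_{i+1}}$. The key observation is that in lex order, any monomial strictly smaller than $X_{i+1}^{k_{i+1}}$ is a monomial in the variables $X_{i+1},\ldots,X_n$ only (any appearance of some $X_j$ with $j\le i$ would force the monomial to exceed $X_{i+1}^{k_{i+1}}$). Hence every term of $g_{i+1}$ lies in $K[X_{i+1},\ldots,X_n]$, giving $0\ne g_{i+1}\in I\cap K[X_{i+1},\ldots,X_n]$.

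I do not expect a real obstacle here: the only subtlety is to verify that a Gr\"obner basis of $I$ over $K$ is also one of $I_{\overline{K}}$ (which follows because the $S$-polynomial criterion and reductions are coefficient-free) so that the same $L(I,G)$ controls both quotients, and to be careful that in the implication $(1)\Rightarrow(2)$ we invoke the single variable case (divisors of $X_i^k$ are pure powers of $X_i$) rather than a general divisibility argument.
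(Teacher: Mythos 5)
Your proof is correct and follows the same route as the paper's: both rely on the fact that $L(I,G)$ gives an $\overline{K}$-basis of $R_{\overline{K}}$ (and that $G$ remains a Gr\"obner basis over $\overline{K}$), prove $(1)\Rightarrow(2)$ by observing that absence of a pure-power leading term in $X_i$ would force $\{X_i^k\}_{k\ge 0}\subseteq L(I,G)$, and prove $(2)\Rightarrow(1)$ via the inclusion $L(I,G)\subseteq\{X_1^{e_1}\cdots X_n^{e_n}\mid e_j<k_j\}$. You also supply the short lex-order argument for the ``in particular'' assertion, which the paper states without proof; note only that the index range there should run from $i=0$ so as to also produce $g_1\in I\setminus\{0\}$, but this is immediate from the same observation.
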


\begin{proof}
(1) $\Rightarrow$ (2): Assume (1).
We assume for a contradiction that for some $1 \leq i \leq n$, there does not exist $g \in G$ such that $\mathrm{LM}_{\succ} (g)$ is not divided by $X_{i^{\prime}}$ for any $i^{\prime} \neq i$.
In this case, the set
\[
L (I, G) = \{ X \in \mathcal{M}_S \mid \mathrm{LM}_{\succ} (g) \nmid X \mbox{ for all } g \in G \}
\]
includes the set $\{ X_i^{k} \mid k \geq 0 \}$ as a subset, where $\mathcal{M}_S$ denotes the set of monomials in $S$.
Since $L ( I, G )$ is a basis of the $\overline{K}$-vector space $R_{\overline{K}}$, we have $\mathrm{dim}_{\overline{K}} R_{\overline{K}} = \infty$.

(2) $\Rightarrow$ (1):
Assume (2).
We have $L (I, G) \subset \{ X_1^{i_1} \cdots X_n^{i_n} \mid i_j < k_j \mbox{ for all } 1 \leq j \leq n \}$, and hence $\mathrm{dim}_{\overline{K}} R_{\overline{K}} < \infty$.
\end{proof}

By using Hilbert's Nullstellensatz and Lemma \ref{lem:elim2}, we can show that $\# V_{\overline{K}} ( I ) < \infty$ if and only if $\mathrm{dim}_{\overline{K}} R_{\overline{K}} < \infty$.

\begin{lem}\label{lem:extension}
Let $I$ be a zero-dimensional ideal in $S=K [ X_1, \ldots , X_n ]$.
If there exists an element $(a_{i+1}, \ldots , a_{n} ) \in V_{\overline{K}} (I_{i}) $ with $I_i :=I \cap K [ X_{i+1}, \ldots , X_{n} ]$, then there exists $a_i \in \overline{K}$ such that $(a_i, \ldots , a_{n}) \in V_{\overline{K}} ( I_{i-1} )$ with $I_{i-1} :=I \cap K [ X_{i}, \ldots , X_{n} ]$.
\end{lem}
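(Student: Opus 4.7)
The plan is to extract from $I$ a polynomial which, viewed as a polynomial in $X_i$, is monic with coefficients in $K[X_{i+1},\ldots,X_n]$. This turns the inclusion $A := K[X_{i+1},\ldots,X_n]/I_i \hookrightarrow B := K[X_i,\ldots,X_n]/I_{i-1}$ into an integral ring extension and reduces the problem to a standard extension-of-homomorphisms statement: the given partial zero corresponds to a $K$-algebra homomorphism $\psi : A \to \overline{K}$ sending $X_j$ to $a_j$, and any extension $\tilde\psi : B \to \overline{K}$ produces $a_i := \tilde\psi(X_i) \in \overline{K}$ with $(a_i, a_{i+1}, \ldots, a_n) \in V_{\overline{K}}(I_{i-1})$, since $\tilde\psi$ then annihilates every element of $I_{i-1}$ by construction.

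First I would fix a Gr\"obner basis $G$ of $I$ with respect to the lex order $X_1 \succ \cdots \succ X_n$, assuming $I \ne K[X_1,\ldots,X_n]$ (otherwise the hypothesis is vacuous). Lemma \ref{lem:elim2} then furnishes $g_i \in G$ with $\mathrm{LM}_{\succ}(g_i) = X_i^{k_i}$ for some $k_i \ge 1$. The lex order forces every monomial of $g_i$ to have zero exponent in $X_1,\ldots,X_{i-1}$, so $g_i \in I_{i-1}$ and, regarded as a polynomial in $X_i$ over $K[X_{i+1},\ldots,X_n]$, is monic of degree $k_i$. Consequently $X_i \in B$ satisfies a monic relation over $A$, so $B$ is generated as an $A$-module by $1, X_i, \ldots, X_i^{k_i-1}$; in particular $B$ is a finitely generated $A$-module. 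The natural map $A \to B$ is moreover injective, since $I_i = I_{i-1} \cap K[X_{i+1},\ldots,X_n]$.

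It then remains to extend $\psi$ to $\tilde\psi : B \to \overline{K}$, which amounts to showing $B \otimes_A \overline{K} \ne 0$: any nonzero finite-dimensional $\overline{K}$-algebra admits a quotient by a maximal ideal whose residue field is algebraic over $\overline{K}$, hence equal to $\overline{K}$. The essential step is to verify $\mathfrak{m} B \ne B$ for $\mathfrak{m} := \ker\psi$, and I plan to do this by the determinant form of Nakayama's lemma: if $\mathfrak{m} B = B$ then there would exist $s \in 1 + \mathfrak{m}$ with $sB = 0$, so any lift $\tilde s \in K[X_{i+1},\ldots,X_n]$ of $s$ would lie in $I_{i-1} \cap K[X_{i+1},\ldots,X_n] = I_i$, forcing $s = 0$ in $A$ and contradicting $\psi(s) = 1$. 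The main obstacle is really just producing the monic polynomial $g_i$, which is exactly what the zero-dimensionality via Lemma \ref{lem:elim2} supplies; once that is in place the remaining steps are a routine application of the lying-over property for finite ring extensions.
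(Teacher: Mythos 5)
Your proof is correct, and it shares its key first step with the paper's proof: both invoke Lemma \ref{lem:elim2} to extract a $g_i$ from the lex Gr\"obner basis that is monic of positive degree in $X_i$ with coefficients in $K[X_{i+1},\ldots,X_n]$ (and, as you note, lies in $I_{i-1}$ because the lex leading monomial $X_i^{k_i}$ rules out any occurrence of $X_1,\ldots,X_{i-1}$). The two proofs diverge after that. The paper simply cites the Extension Theorem of elimination theory: since $g_i\in I_{i-1}$ has constant leading coefficient $1$ in $X_i$, that coefficient cannot vanish at $(a_{i+1},\ldots,a_n)$, so the partial solution lifts. You instead unwind the Extension Theorem into a self-contained commutative-algebra argument: $g_i$ makes $B=K[X_i,\ldots,X_n]/I_{i-1}$ a finite module over $A=K[X_{i+1},\ldots,X_n]/I_i$, the inclusion $A\hookrightarrow B$ is injective because $I_{i-1}\cap K[X_{i+1},\ldots,X_n]=I_i$, and then lying-over (via Nakayama's lemma applied to $\mathfrak m=\ker\psi$, plus the observation that a nonzero finite-dimensional $\overline K$-algebra has a $\overline K$-valued point) produces the extended homomorphism $\tilde\psi\colon B\to\overline K$ and hence $a_i=\tilde\psi(X_i)$. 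Your route is longer but avoids treating the Extension Theorem as a black box and makes the role of integrality explicit; the paper's route is shorter at the cost of appealing to a standard but nontrivial result. One small point worth being careful about: Lemma \ref{lem:elim2} only guarantees $k_i\ge 0$; you correctly handle this by noting $k_i=0$ would force $I=S$, which you exclude at the outset because the hypothesis $V_{\overline K}(I_i)\ne\emptyset$ would then be vacuous.
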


\begin{proof}
Assume $(a_{i+1}, \ldots , a_{n} ) \in V_{\overline{K}} (I_{i}) $ with $I_i :=I \cap K [ X_{i+1}, \ldots , X_{n} ]$.
Let $G$ be a Gr\"{o}bner basis for $I$ with respect to the lex order with $X_1 \succ \cdots \succ X_n$.
Since $\# V_{\overline{K}} ( I ) < \infty$, it follows from Lemma \ref{lem:elim2} that there exists $g_i \in G$ such that $g_i$ is of the form
\[
g_{i} = X_i^{k_i} + \sum_{j=0}^{k_i-1} h_{j}^{(i)} (X_{i+1}, \ldots , X_n ) X_i^j
\]
for some $k_i \geq 0$ and $h_j^{(i)} \in K [X_{i+1}, \ldots , X_n]$.
By the extension theorem, one has that there exists an element $a_i \in \overline{K}$ such that $(a_i, \ldots , a_{n}) \in V_{\overline{K}} ( I_{i-1} )$.
\end{proof}

\begin{lem}\label{lem:elim3}
Let $I$ be an ideal in $S=K [ X_1, \ldots , X_n ]$, and $\succ$ the lex order with $X_1 \succ \cdots \succ X_n$.
Let $G$ be a Gr\"{o}bner basis for $I$ with respect to $\succ$.
The following are equivalent:
\begin{enumerate}
\item[$(1)$] The ideal $I$ is zero-dimensional, i.e., $\# V_{\overline{K}} ( I ) < \infty$.
\item[$(2)$] There exists $g \in G$ such that $\mathrm{LM}_{\succ} (g) = X_1^{k_1}$ for some $k_1 \geq 0$, and $I_1 := I \cap K [ X_{2}, \ldots , X_{n} ]$ is a zero-dimensional ideal in $K[X_2, \ldots , X_n]$.
\end{enumerate}
\end{lem}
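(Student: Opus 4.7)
My plan is to deduce both implications directly from Lemma \ref{lem:elim2} together with the Elimination Theorem, which ensures that $G \cap K[X_2,\ldots,X_n]$ is a Gr\"obner basis of $I_1 = I \cap K[X_2,\ldots,X_n]$ with respect to the induced lex order on $X_2 \succ \cdots \succ X_n$. The crucial observation is that under the lex order with $X_1 \succ \cdots \succ X_n$, a polynomial $g \in G$ whose leading monomial is a pure power $X_i^{k_i}$ with $i \geq 2$ cannot contain $X_1$ in any of its terms, so it automatically lies in $K[X_2,\ldots,X_n]$. This is what makes the induction on variables work cleanly.

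For the direction $(1)\Rightarrow(2)$, I will assume that $I$ is zero-dimensional and apply Lemma \ref{lem:elim2} to obtain, for every $1 \leq i \leq n$, an element $g_i \in G$ with $\mathrm{LM}_{\succ}(g_i) = X_i^{k_i}$. The case $i=1$ gives the first part of (2). For $i \geq 2$, by the observation above, $g_i \in K[X_2,\ldots,X_n]$, so $g_i \in G \cap K[X_2,\ldots,X_n]$. By the Elimination Theorem this intersection is a Gr\"obner basis of $I_1$, so the elements $g_2,\ldots,g_n$ witness the condition of Lemma \ref{lem:elim2} for the ideal $I_1$ in $K[X_2,\ldots,X_n]$, forcing $I_1$ to be zero-dimensional.

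For the direction $(2)\Rightarrow(1)$, I will again use the Elimination Theorem to see that $G \cap K[X_2,\ldots,X_n]$ is a Gr\"obner basis of $I_1$. Since $I_1$ is zero-dimensional by assumption, Lemma \ref{lem:elim2} applied inside $K[X_2,\ldots,X_n]$ produces, for each $2 \leq i \leq n$, some $g'_i \in G \cap K[X_2,\ldots,X_n] \subset G$ with $\mathrm{LM}_{\succ}(g'_i) = X_i^{k_i}$. Together with the given element of $G$ with leading monomial $X_1^{k_1}$, this supplies, for every $1 \leq i \leq n$, an element of $G$ whose leading monomial is a pure power of $X_i$. Applying Lemma \ref{lem:elim2} in the reverse direction to $I$ itself then yields $\dim_{\overline{K}} R_{\overline{K}} < \infty$, which is equivalent to $\# V_{\overline{K}}(I) < \infty$ via Hilbert's Nullstellensatz as already noted in the text.

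There is no real obstacle; the only subtle point is checking that the Elimination Theorem genuinely applies here (which it does because $\succ$ is the lex order and $X_1$ is the largest variable), and that elements of $G$ whose leading monomial is a pure power $X_i^{k_i}$ with $i \geq 2$ lie entirely in $K[X_2,\ldots,X_n]$. Both facts are standard in lex Gr\"obner basis theory, so the proof will essentially be bookkeeping between Lemma \ref{lem:elim2}, applied twice, and the Elimination Theorem.
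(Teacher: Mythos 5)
Your proof is correct, and it takes a noticeably different and somewhat slicker route than the paper's. For $(1)\Rightarrow(2)$ the paper proves that $I_1$ is zero-dimensional \emph{geometrically}: it invokes Lemma~\ref{lem:extension} to show that every point of $V_{\overline K}(I_1)$ lifts to a point of the finite set $V_{\overline K}(I)$, so $V_{\overline K}(I_1)$ must itself be finite. You instead argue \emph{algebraically}: Lemma~\ref{lem:elim2} supplies $g_i\in G$ with pure-power leading monomials $X_i^{k_i}$ for all $i$, the lex structure forces $g_i\in K[X_2,\dots,X_n]$ for $i\ge 2$ (since in lex order a monomial free of $X_1$ dominates all monomials of $g_i$, they are all free of $X_1$), and the Elimination Theorem identifies these $g_i$ as members of the Gr\"obner basis $G_1$ of $I_1$, so Lemma~\ref{lem:elim2} applies again to give zero-dimensionality of $I_1$. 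This bypasses Lemma~\ref{lem:extension} entirely. For $(2)\Rightarrow(1)$ the paper peels off one variable at a time in an implicit induction (applying its own $(1)\Rightarrow(2)$ repeatedly to $I_1, I_2, \dots$), whereas you finish in a single pass by applying Lemma~\ref{lem:elim2} to $I_1$ to harvest all the pure-power elements for $i\ge 2$ at once, adjoining the given $X_1^{k_1}$ element, and applying Lemma~\ref{lem:elim2} a final time to $I$. Both arguments are sound; yours is more economical in that it rests only on Lemma~\ref{lem:elim2} and the Elimination Theorem, plus the standard lex-order observation, and avoids any reliance on the extension/lifting machinery or recursion. One small thing worth making explicit in a write-up is the tacit use, at both ends, of the equivalence $\#V_{\overline K}(J)<\infty \iff \dim_{\overline K}(S_{\overline K}/J_{\overline K})<\infty$ that the paper records just after Lemma~\ref{lem:elim2}; you do mention it for the last step, but it is also needed at the start of each direction to pass from ``zero-dimensional'' to ``finite-dimensional quotient.''
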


\begin{proof}
(1) $\Rightarrow$ (2): This is a special case of Lemma \ref{lem:elim2}.
We show $\# V_{\overline{K}}(I_1) < \infty$.
Let $(a_2, \ldots , a_n) \in V_{\overline{K}}(I_1)$.
Since $I$ is zero-dimensional, it follows from Lemma \ref{lem:extension} that there exists $a_1 \in \overline{K}$ such that $(a_1, \ldots , a_{n}) \in V_{\overline{K}} ( I )$.
Since $V_{\overline{K}}(I)$ is finite, $V_{\overline{K}}(I_1)$ is also finite.

(2) $\Rightarrow$ (1):
Assume (2).
By the elimination theorem, $G_1 := G \cap K[X_2, \ldots , X_n]$ is a Gr\"{o}bner basis for $I_1$ with respect to the order on $X_2 \succ \cdots \succ X_n$ induced by $\succ$.
Since $I_1$ is zero-dimensional, it follows from (1) $\Rightarrow$ (2) that there exists $g_2 \in G_1 \subset G$ such that $\mathrm{LM}_{\succ} (g_2)=X_2^{k_2}$ for some $k_2 \geq 0$, and $\# V_{\overline{K}} (I_2) < \infty $.
Here we set $I_2 = I_1 \cap K [X_3, \ldots , X_n]$.
Inductively for each $1 \leq i \leq n$, there exists $g_i \in G$ such that $\mathrm{LM}_{\succ} (g_i) = X_i^{k_i}$ for some $k_i \geq 0$ (and $I_{i-1} := I \cap K [ X_{i}, \ldots , X_{n} ]$ is a zero-dimensional ideal in $K[X_{i}, \ldots , X_n]$).
By Lemma \ref{lem:elim2}, the ideal $I$ is zero-dimensional.
\end{proof}

\begin{lem}\label{lem:extension2}
Let $I$ be an ideal in $S=K [ X_1, \ldots , X_n ]$, and let $I_1 := I \cap K [X_2, \ldots , X_n]$.
For an element $(a_1, \ldots , a_n) \in K^{n}$, the following are equivalent:
\begin{enumerate}
\item[$(1)$] $(a_1, \ldots , a_n) \in V_{\overline{K}}(I)$,
\item[$(2)$] $(a_2, \ldots , a_n) \in V_{\overline{K}}(I_1)$ and $a_1 \in V_{\overline{K}}( I (a_2, \ldots , a_n) )$, where $I (a_2, \ldots , a_n)$ denotes the ideal in $K [X_1]$ generated by $\{ f (X_1, a_2, \ldots , a_n ) : f \in I \}$.
\end{enumerate}
\end{lem}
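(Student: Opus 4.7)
The plan is to prove both implications by a direct unpacking of definitions; no deep machinery (extension theorem, Gröbner bases, Nullstellensatz) is needed here, since the statement is essentially the observation that evaluation at $(a_2,\ldots,a_n)$ commutes with taking the ideal generated by $I$.

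For (1) $\Rightarrow$ (2), assume that every $f \in I$ satisfies $f(a_1,\ldots,a_n)=0$. First, for any $f \in I_1 = I \cap K[X_2,\ldots,X_n]$, the polynomial $f$ does not involve $X_1$, so $f(a_2,\ldots,a_n) = f(a_1,\ldots,a_n) = 0$; thus $(a_2,\ldots,a_n) \in V_{\overline K}(I_1)$. Next, by definition $I(a_2,\ldots,a_n) \subset K[X_1]$ is generated by the polynomials $f(X_1, a_2,\ldots,a_n)$ as $f$ ranges over $I$; evaluating such a generator at $X_1=a_1$ gives $f(a_1,a_2,\ldots,a_n) = 0$. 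Hence $a_1$ is a root of every generator of $I(a_2,\ldots,a_n)$, and therefore of every element of that ideal, so $a_1 \in V_{\overline K}(I(a_2,\ldots,a_n))$.

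For (2) $\Rightarrow$ (1), fix an arbitrary $f \in I$. Then $f(X_1,a_2,\ldots,a_n)$ is by definition one of the generators of $I(a_2,\ldots,a_n)$, so the hypothesis $a_1 \in V_{\overline K}(I(a_2,\ldots,a_n))$ yields $f(a_1,a_2,\ldots,a_n) = 0$. Since $f \in I$ was arbitrary, $(a_1,\ldots,a_n) \in V_{\overline K}(I)$. Note that the first condition of (2) is not actually used in this direction — it is implied by (1) but automatic from the second condition via the containment $I_1 \subset I$ combined with the observation above. (It is nevertheless the useful form for the application, since in practice one computes $V_{\overline K}(I_1)$ first and then specializes.)

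There is no real obstacle; the only tiny point to notice is that the ideal $I(a_2,\ldots,a_n)$ is defined via the explicit generating set $\{f(X_1,a_2,\ldots,a_n) : f \in I\}$, which makes the two implications symmetric and immediate. The lemma will be invoked in Section \ref{section4} in tandem with Lemma \ref{lem:extension} and Lemma \ref{lem:elim3} to reduce the problem of listing the $\overline{\F_{11}}$-points of a zero-dimensional ideal (coming from the equations $\varphi(g) = \mu \varphi$ together with $P \circ g = \lambda P$) to successive univariate root-finding in the variables $X_n, X_{n-1}, \ldots, X_1$ using a lex Gröbner basis.
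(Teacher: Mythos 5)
Your proof is correct. The paper states Lemma \ref{lem:extension2} without proof, evidently regarding it as an immediate consequence of the definitions, which is exactly the approach you take: both implications come from the observation that evaluating a polynomial $f\in I$ at $(a_1,\ldots,a_n)$ can be factored through first substituting $(a_2,\ldots,a_n)$ to obtain the generator $f(X_1,a_2,\ldots,a_n)$ of $I(a_2,\ldots,a_n)$ and then evaluating at $X_1=a_1$. Your side remark that the condition $(a_2,\ldots,a_n)\in V_{\overline K}(I_1)$ is actually redundant in the $(2)\Rightarrow(1)$ direction is also correct: for $f\in I_1\subset I$ the generator $f(X_1,a_2,\ldots,a_n)$ is the constant $f(a_2,\ldots,a_n)$, so if it were nonzero then $I(a_2,\ldots,a_n)=K[X_1]$ and $V_{\overline K}(I(a_2,\ldots,a_n))=\emptyset$, contradicting the existence of $a_1$; the lemma keeps the first condition of $(2)$ because it mirrors the structure of the recursive algorithm that follows.
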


Based on Lemmas \ref{lem:elim3} and \ref{lem:extension2}, we give an algorithm for computing the zero-locus of a given ideal.
Given an ideal $I$, the following algorithm computes the zero-locus $V_{\overline{K}} (I)$ if $I$ is zero-dimensional.

\begin{description}
\item[{\it Step 1.}] First we compute a Gr\"{o}bner basis $G = \{ g_1, \ldots , g_k \}$ for $I$ with respect to the lex order with $X_1 \succ \cdots \succ X_n$.
For each $0 \leq i \leq n-1$, we set $G_i := G \cap K [X_{i+1}, \ldots , X_n]$.

\item[{\it Step 2.}] To determine $V_{\overline{K}}(I_{n-1}) = V_{\overline{K}}(G_{n-1})$, we proceed with the following steps:
	\begin{description}
	\item[{\it Step 2-1.}] From $G_{n-1} = G \cap K [X_n]$, compute a polynomial $h_n ( X_n )$ such that $\langle h_n \rangle = I \cap K [X_n]$.
	Note that $I_{n-1}$ is zero-dimensional and $G_{n-1} \neq \emptyset$ since $I$ is zero-dimensional.
	Indeed, if $G_{n-1} = \emptyset$, then $I_{n-1}$ is not zero-dimensional, and so is $I_{n-2}$ by Lemma \ref{lem:elim3}.
	Recursively $I$ is not zero-dimensional.
	\item[{\it Step 2-2.}] Computing the splitting field $K^{\prime}$ of $h_n$ together with its roots in $K^{\prime}$, we determine $V_{\overline{K}}(I_{n-1})$.
	Replace $K$ by $K^{\prime}$.
	\end{description}

\item[{\it Step 3.}] To determine $V_{\overline{K}}(I_{n-2}) = V_{\overline{K}}(G_{n-2})$, we proceed with the following steps for each element $a_n \in V_{\overline{K}}(I_{n-1})$:
	\begin{description}
	\item[{\it Step 3-1.}] Compute the set $G_{n-2}(a_n):=\{ h ( X_{n-1}, a_n ) \mid h \in G_{n-2} \}$.
	If $G_{n-2}(a_n) = \{ 0 \}$, then $I$ is not zero-dimensional.
Indeed, if $h ( X_{n-1}, a_n ) = 0$ for all $h \in G_{n-2}$, one has $h ( a_{n-1}, a_n ) = 0$ for all $a_{n-1} \in \overline{K}$.
In this case, $V_{\overline{K}} (I_{n-2})$ has infinitely many elements, and hence so is $V_{\overline{K}} (I_{n-3})$ by Lemma \ref{lem:elim3}.
Recursively $V_{\overline{K}} (I)$ has infinitely many elements.
	\item[{\it Step 3-2.}] Computing the splitting field $K^{\prime}$ of the polynomial $\mathrm{GCD} \{ h^{\prime}(X_{n-1}) \mid h^{\prime} \in G_{n-2}(a_n) \}$ together with its roots in $K^{\prime}$, we determine $V_{\overline{K}}(I_{n-1}(a_n))$.
	Replace $K$ by $K^{\prime}$.
	\end{description}
	By Lemma \ref{lem:extension2}, we compute $V_{\overline{K}}(I_{n-2})$.
	
\item[{\it Step 4.}] Proceed as before until $V_{\overline{K}}(I)$ is computed.
\end{description}

In the following, we give a pseudocode for the above algorithm to compute all solutions to a system of algebraic equations defined by a zero-dimensional ideal.
\begin{algorithm}[H] %
\caption{$\texttt{ZeroDimensionalIdealVariety} ( f_1, \ldots , f_s )$}
\label{alg:VarietyAC1}
\begin{algorithmic}[1]
\REQUIRE{An ordered $s$-tuple $( f_1, \ldots , f_s ) \in S^s$ such that $I:= \langle f_1, \ldots , f_s \rangle_S$ satisfies $\# V_{\overline{K}} (I) < \infty$}
\ENSURE{The set $V_{\overline{K}} ( I ) = \{ ( a_1, \ldots , a_n ) \in \overline{K}^n \mid f ( a_1, \ldots , a_n ) = 0 \mbox{ for all } f \in I \}$, and the smallest extension field $K^{\prime}$ of $K$ such that $V_{\overline{K}} (I) = V_{K^{\prime}}(I)$}
\STATE Compute a Gr\"{o}bner basis $G$ for $I$ with respect to the lex order with $X_1 \succ \cdots \succ X_n$
\STATE $G_{n-1}$ $\leftarrow$ $G \cap K [ X_{n}]$
\STATE $h_n ( X_n )$ $\leftarrow$ the minimal polynomial of the ideal generated by $\{ h ( X_{n}) \mid h \in G_{n-1} \}$
\STATE Construct the minimal splitting field $K^{\prime}$ of $h_n ( X_n ) $, and replace $K$ by $K^{\prime}$
\STATE $V_{n-1}$ $\leftarrow$ $\{ a_n \mid a_{n} \in K \mbox{ and } h_n ( a_{n} ) = 0 \}$
\FOR{$i=n-2$ \textbf{down to} $0$}
	\STATE $G_i$ $\leftarrow$ $G \cap K [X_{i+1}, \ldots , X_{n}]$
	\STATE $V_i$ $\leftarrow$ $\emptyset$
	\FOR{$(a_{i+2}, \ldots , a_n ) \in V_{i+1}$}
		\STATE Compute $\{ h ( X_{i+1}, a_{i+2}, \ldots , a_n ) \mid h \in G_i \}$
		\STATE $h_{i+1}(X_{i+1})$ $\leftarrow$ the minimal polynomial of the ideal generated by the set of the polynomial $h ( X_{i+1}, a_{i+2}, \ldots , a_n )$ for $h \in G_i$
		\STATE Construct the minimal splitting field $K^{\prime}$ of $h_{i+1}(X_{i+1})$, and replace $K$ by $K^{\prime}$
		\STATE $V_{i}$ $\leftarrow$ $V_{i} \cup \{ ( a_{i+1}, a_{i+2}, \ldots , a_n ) \mid a_{i+1} \in K \mbox{ and } h_{i+1} ( a_{i+1} ) = 0 \}$
	\ENDFOR
\ENDFOR
\RETURN $V_0$ and $K$
\end{algorithmic}
\end{algorithm}

\section{Algorithm to compute automorphism groups}\label{section3}
In this section, we shall give an algorithm for computing the automorphism groups of (nonhyperelliptic) superspecial curves of genus $4$.
Specifically, we reduce computing the automorphism groups into solving multivariate systems over a finite field via the Bruhat decomposition given in Subsection \ref{subsec:ortho}.
Each automorphism computed by our algorithm is represented as an element of the general linear group of degree $4$ (in fact, an element of the orthogonal group associated to some quadratic form).
Moreover, we present a method to determine the finite group structure of a given subgroup of the general linear group of degree $4$.
Our method is heuristic, but useful to determine the finite group structure of computed automorphism groups.

\subsection{Computing the automorphisms}\label{subsec:compaut}

Throughout this section, let $K$ be a finite field $\mathbb{F}_q$, or its algebraic closure $\overline{\mathbb{F}_q}$.
Let $C=V ( Q, P )$ be a (superspecial) curve defined by an irreducible quadratic form $Q$ and an irreducible cubic form $P$ in $K[x,y,z,w]$.
Note that $Q$ is assumed to be one of the following three types: {\bf (N1)} $Q^{\rm (N1)} = 2 x w + 2 y z$, {\bf (N2)} $Q^{\rm (N2)} = 2 x w + y^2 - \epsilon z^2$ with $\epsilon \in K^{\times} \smallsetminus (K^{\times})^2$, and {\bf (Dege)} $Q^{\rm (Dege)} = 2 y w + z^2$.
We denote by $\Aut_K (C)$ the automorphism group of the curve $C$ over $K$, and denote it simply by $\Aut(C)$ if $K$ is algebraically closed.
Putting $G_K := \{\, g \in \tilde{\gO}_{\varphi}(K) \mid g \cdot P \equiv r P \bmod Q \mbox{ for some } r \in K^{\times} \,\}$, we have $\Aut_K(C)=G_K/{\sim}$, where $\varphi$ denotes the matrix associated to $Q$, and we write $g \sim cg$ for $g \in \tilde{\gO}_{\varphi}(K)$ and $c \in K^{\times}$.
Note that $g \cdot P := P ( (x, y, z, w) \cdot {}^t g )$ for $g \in \mathrm{GL}_{4} (K)$.
This subsection gives an algorithm to enumerate all representatives $g$ of the elements in $\mathrm{Aut}_{K}(C)$.

Here, we reduce computing all representatives of the elements in $\Aut_K(C)=G_K/{\sim}$ into solving multivariate systems over $K$ via the Bruhat decomposition given in Subsection \ref{subsec:ortho}.
To do this, we first regard some entries of elements in $\tilde{\mathrm{O}}_{\varphi}(K)$ as variables.
Specifically, using the Bruhat decomposition, we represent unknown elements $g$ of the group $G_K$ as a multiple of matrices with some unknown variables in their entries.
As we describe below, we have some patterns representing $g$ (e.g., in the case {\bf (N1)}, we have $8$ patterns $g_1, \ldots , g_8$).
For each element of each pattern, we construct a multivariate system from the equation
\[
g_i \cdot P \equiv r P \bmod Q \mbox{ with } r \neq 0 ,
\]
where $r$ is an extra variable.
Hence we obtain all elements $g$ in $G_K$ by solving the multivariate system for each $g_i$.
Moreover, for the computed set $G_K$, we can enumerate all representatives of the elements of $\Aut_K(C)=G_K /{\sim}$ by identifying $g$ with $c g$ for $g \in G_K$ and $c \in K^{\times}$.
Here, based on the above strategy, we give an algorithm for computing all elements $g$ in $G_K$.

\paragraph{Main algorithm}
Put $G_K = \emptyset$.
With notation as above, proceed with the following procedures for each $g_i \in \mathcal{G}_Q$ (we describe the set $\mathcal{G}_Q$ of matrices for each of {\bf (N1)}, {\bf (N2)} and {\bf (Dege)} below):
\begin{enumerate}
	\item[{\rm 1}.] Compute the zeros of a multivariate system derived from the equation
	\[
	g_i \cdot P \equiv r P \bmod Q ,
	\]
	where $r$ is an extra variable.
	Specifically, take the following three procedures:
	\begin{enumerate}
		\item[{\rm 1-1}.] Compute $P^{\prime}:=( g_i \cdot P -  r P ) \bmod Q$, where $g_i \cdot P := P ( (x, y, z, w) \cdot {}^t g_i )$.
		\item[{\rm 1-2}.] Let $\mathcal{P}_0$ be the set of the coefficients in $P^{\prime}$, and set $\mathcal{P}:= \mathcal{P}_0 \cup \mathcal{P}_Q$ (we describe the set $\mathcal{P}_Q$ of polynomials for each of {\bf (N1)}, {\bf (N2)} and {\bf (Dege)} below).
		\item[{\rm 1-3}.] Solve the multivariate system constructed in Step 1-2. Specifically solve the multivariate system $f=0$ for all $f \in \mathcal{P}$ over $K$ by Algorithm \ref{alg:VarietyAC1} in Subsection \ref{subsec:solving}.
	\end{enumerate}
	\item[{\rm 2}.] For each root, substitute it into $g_i$ and replace $G_K$ by $G_K \cup \{ g_i \}$.
\end{enumerate}

In Algorithm \ref{alg:AutoN1} below, we give a pseudocode of Main algorithm for $K=\mathbb{F}_q$ or $\overline{\mathbb{F}_q}$. 

\begin{algorithm}[H]
\caption{$\texttt{AutomorphismGroup}(Q,P,q,q^{\prime})$}
\label{alg:AutoN1}
\begin{algorithmic}[1]
\REQUIRE{A quadratic form $Q \in \mathbb{F}_q[x,y,z,w]$, a cubic form $P \in \mathbb{F}_q[x,y,z,w]$, the order $q$ of the finite field $\mathbb{F}_q$, and an integer $q^{\prime} \in \{ 0, q \}$}
\ENSURE{The set $G_K:= \{\, g \in \tilde{\gO}_{\varphi}(K) \mid g \cdot P \equiv r P \bmod{Q} \mbox{ for some } r \in K^{\times} \,\}$, where $K = \mathbb{F}_q$ (resp.\ $K = \overline{\mathbb{F}_q}$) if $q^{\prime}=q$ (resp.\ $q^{\prime}=0$)}
\STATE $G$ $\leftarrow$ $\emptyset$
\STATE $\varphi$ $\leftarrow$ the coefficient matrix of the quadratic form $Q$
\FOR{$g_i \in \mathcal{G}_Q$}
	\STATE $\mathcal{P}_0$ $\leftarrow$ $\emptyset$
	\STATE $(x^{\prime}, y^{\prime}, z^{\prime}, w^{\prime})$ $\leftarrow$ $(x,y,z,w) \cdot {}^t g_i$
	\STATE $P^{\prime}$ $\leftarrow$ $\left( P (x^{\prime}, y^{\prime}, z^{\prime}, w^{\prime}) - r P \right) \bmod Q$
	\STATE /* $r$ is a variable */
	\FOR{$i$, $j$, $k$, $\ell$ in $\mathbb{Z}_{\geq 0}$ with $i + j + k + \ell =3$}
		\STATE $\mathcal{P}_0$ $\leftarrow$ $\mathcal{P}_0 \cup \{ \mbox{the coefficient of } x^i y^j z^k w^{\ell} \mbox{ in } P^{\prime} \}$
	\ENDFOR
	\STATE $\mathcal{P}$ $\leftarrow$ $\mathcal{P}_0 \cup \mathcal{P}_Q$
	\STATE /* $\mathcal{P}_Q$ for each of {\bf (N1)}, {\bf (N2)} and {\bf (Dege)} is described below */
	\STATE Solve the multivariate system $f=0$ for all $f \in \mathcal{P}$ over $\overline{\mathbb{F}_{q}}$ (if $q^{\prime} =0$), or $\mathbb{F}_{q}$ (if $q^{\prime} =q$)
\STATE /* Use Algorithm \ref{alg:VarietyAC1} in Subsection \ref{subsec:solving} */
	\IF{$q^{\prime} = 0$} 
		\STATE $V$ $\leftarrow$ $V_{\overline{\mathbb{F}_q}}( \mathcal{P})$ 
	\ELSE
		\STATE $V$ $\leftarrow$ $V_{\mathbb{F}_q} ( \mathcal{P})$ 
	\ENDIF
	\FOR{$\underline{a}  \in V$}
		\STATE Substitute $\underline{a}$ into corresponding variables of unknown coefficients in $g_i$
		\STATE $G$ $\leftarrow$ $G \cup \{g_i \}$
	\ENDFOR
\ENDFOR
\RETURN $G$
\end{algorithmic}
\end{algorithm}

\paragraph{Elements in $\tilde\gO_{\varphi}(K)$ in (N1) case}
Let $Q = 2 x w + 2 y z$, and $\varphi$ its coefficient matrix.
Recall from Subsection \ref{subsec:orthoN1} that we have the Bruhat decomposition of the orthogonal similitude group associated to $\varphi$, say
\begin{eqnarray}
 \tilde\gO_{\varphi}(K)= \gA \tilde \gT \gU \gW \gU ,\label{eq:BruhatN1}
\end{eqnarray}
where $\gA$, $\tilde\gT$, $\gU$ and $\gW$ are the same as in Subsection \ref{subsec:orthoN1}.
Here writing $\gA = \{ 1_4, M_{\gA}:= E_{1,1} + E_{2,3} + E_{3,2} + E_{4,4} \}$ and $\gW = \{1_4, s_1, s_2, s_1 s_2 \}$, it follows from \eqref{eq:BruhatN1} that we have $\tilde\gO_{\varphi}(K) = \bigcup_{i=1}^8 \Omega_i$, where
\begin{equation*}
\begin{array}{llll}
\Omega_1 := \tilde{\gT} \gU \gU, & \Omega_2 := \tilde{\gT} \gU s_1 \gU, & \Omega_3 := \tilde{\gT} \gU s_2 \gU, & \Omega_4 := \tilde{\gT} \gU s_1 s_2 \gU, \\
\Omega_5 := M_{\gA} \tilde{\gT} \gU \gU, & \Omega_6 := M_{\gA} \tilde{\gT} \gU s_1 \gU, & \Omega_7 := M_{\gA} \tilde{\gT} \gU s_2 \gU, & \Omega_8 := M_{\gA} \tilde{\gT} \gU s_1 s_2 \gU .
\end{array}
\end{equation*}
Put $\tilde{T} (a_1,a_2, b_1,b_2,c) = \diag(a_1, b_1, c b_2, c a_2)$,
\[
U_1(d_1)=\begin{pmatrix}1&d_1&0&0\\0&1&0&0\\0&0&1&-d_1\\0&0&0&1\end{pmatrix}, \quad
U_2(d_2)=\begin{pmatrix}1&0&d_2&0\\0&1&0&-d_2\\0&0&1&0\\0&0&0&1\end{pmatrix}, \quad
U(d_1,d_2) = U_1(d_1) U_2(d_2),
\]
\[
U_1(e_1)=\begin{pmatrix}1&e_1&0&0\\0&1&0&0\\0&0&1&-e_1\\0&0&0&1\end{pmatrix}, \quad
U_2(e_2)=\begin{pmatrix}1&0&e_2&0\\0&1&0&-e_2\\0&0&1&0\\0&0&0&1\end{pmatrix}, \quad
U(e_1,e_2) = U_1(e_1) U_2(e_2),
\]
where $a_1$, $a_2$, $b_1$, $b_2$, $c$, $d_1$, $d_2$, $e_1$ and $e_2$ are elements in $K$ with $a_1 a_2 =1$, $b_1 b_2 =1$ and $c \neq 0$.
One can verify that elements of $\Omega_{i}$ for each $1 \leq i \leq 8$ are written as follows:
\begin{description}
\item[{\it Elements in $\Omega_1$.}]
$g_1 := \tilde{T} (a_1,a_2, b_1,b_2,c) U (d_1, d_2 )$,
\item[{\it Elements in $\Omega_2$.}]
$g_2 := \tilde{T} (a_1,a_2, b_1,b_2,c) U (d_1, d_2) s_1 U_1 (e_1)$,
\item[{\it Elements in $\Omega_3$.}]
$g_3 := \tilde{T} (a_1,a_2, b_1,b_2,c) U (d_1, d_2) s_2 U_2 (e_2)$,
\item[{\it Elements in $\Omega_4$.}]
$g_4 := \tilde{T} (a_1,a_2, b_1,b_2,c) U (d_1, d_2) s_1 s_2 U (e_1, e_2 )$,
\item[{\it Elements in $\Omega_5$.}]
$g_5 := M_{\gA} \tilde{T} (a_1,a_2, b_1,b_2,c) U (d_1, d_2 )$,
\item[{\it Elements in $\Omega_6$.}]
$g_6 := M_{\gA} \tilde{T} (a_1,a_2, b_1,b_2,c) U (d_1, d_2) s_1 U_1 (e_1)$,
\item[{\it Elements in $\Omega_7$.}]
$g_7 := M_{\gA} \tilde{T} (a_1,a_2, b_1,b_2,c) U (d_1, d_2) s_2 U_2 (e_2)$,
\item[{\it Elements in $\Omega_8$.}]
$g_8 := M_{\gA} \tilde{T} (a_1,a_2, b_1,b_2,c) U (d_1, d_2) s_1 s_2 U (e_1, e_2 )$.
\end{description}
Regarding $a_1$, $a_2$, $b_1$, $b_2$, $c$, $d_1$, $d_2$, $e_1$ and $e_2$ as variables, we set $\mathcal{G}_Q = \{ g_i \mid 1 \leq i \leq 8 \}$ in Main algorithm and Algorithm \ref{alg:AutoN1}. 
Moreover, we set $\mathcal{P}_Q = \{ a_1 a_2 - 1, b_1 b_2 - 1, c s - 1, r v -1 \}$, where $s$, $r$ and $v$ are extra variables.
If the $3$-rd power map is surjective over $K$, we may assume $\mathcal{P}_Q = \{ a_1 a_2 - 1, b_1 b_2 - 1, c s - 1, r -1 \}$.

\paragraph{Elements in $\tilde\gO_{\varphi}(K)$ in (N2) case}
Let $Q =2 x w + y^2 - \epsilon z^2$ with $\epsilon \in K^{\times} \smallsetminus (K^{\times})^2$, and $\varphi$ its coefficient matrix.
Recall from Subsection \ref{subsec:orthoN2} that we have the Bruhat decomposition of the orthogonal similitude group associated to $\varphi$, say
\begin{eqnarray}
 \tilde\gO_{\varphi}(K)= \gA \gH \tilde \gC \gU \gW \gU ,\label{eq:BruhatN2}
\end{eqnarray}
where $\gA$, $\gH$, $\tilde \gC$, $\gU$ and $\gW$ are the same as in Subsection \ref{subsec:orthoN2}.
Here writing $\gA = \{ 1_4, M_{\gA}:= \mathrm{diag}(1,1,-1,1) \}$ and $\gW =\{1_4,  M_{\gW}:= E_{1,4} + E_{2,2} - E_{3,3} + E_{4,1} \}$, it follows from \eqref{eq:BruhatN2} that we have $\tilde\gO_{\varphi}(K) = \bigcup_{i=1}^4 \Omega_i$, where
\begin{equation*}
\begin{array}{llll}
\Omega_1 := \gH \tilde \gC \gU \gU, & \Omega_2 := \gH \tilde \gC \gU M_{\gW} \gU, & \Omega_3 := M_{\gA} \gH \tilde \gC \gU \gU, & \Omega_4 := M_{\gA} \gH \tilde \gC \gU M_{\gW} \gU .
\end{array}
\end{equation*}
We set
\[
U (c_1,c_2) = 
\begin{pmatrix}
1&c_1&0&-c_1^2/2\\
0&1&0&-c_1\\
0&0&1&0\\
0&0&0&1
\end{pmatrix}
\begin{pmatrix}
1&0&c_2&c_2^2/(2 \epsilon)\\
0&1&0&0\\
0&0&1&c_2/ \epsilon \\
0&0&0&1
\end{pmatrix},
\]
\[
U (d_1, d_2) = 
\begin{pmatrix}
1&d_1&0&-d_1^2/2\\
0&1&0&-d_1\\
0&0&1&0\\
0&0&0&1
\end{pmatrix}
\begin{pmatrix}
1&0&d_2&d_2^2/(2 \epsilon)\\
0&1&0&0\\
0&0&1&d_2/ \epsilon \\
0&0&0&1
\end{pmatrix},
\]
\[
R(b_1,b_2) = \begin{pmatrix}
1 & 0 & 0 & 0\\
0 & b_1 & \epsilon b_2 & 0\\
0 & b_2 & b_1 & 0\\
0 & 0 & 0 & b_1^2-\epsilon b_2^2
\end{pmatrix},
\]
$H(a_1,a_2) = \diag(a_1,1,1, a_2)$ and $\tilde{T}(a_1,a_2,b_1,b_2) = H (a_1,a_2) R (b_1,b_2)$, where $a_1$, $a_2$, $b_1$, $b_2$, $c_1$, $c_2$, $d_1$ and $d_2$ are elements in $K$ with $a_1 a_2 =1$ and $b_1^2-\epsilon b_2^2 \neq 0$.
One can verify that elements of $\Omega_{i}$ for each $1 \leq i \leq 4$ are written as follows:
\begin{description}
\item[{\it Elements in $\Omega_1$.}]
$g_1 := \tilde{T}(a_1,a_2,b_1,b_2) U (c_1, c_2 )$,
\item[{\it Elements in $\Omega_2$.}]
$g_2 := \tilde{T}(a_1,a_2,b_1,b_2) U (c_1, c_2 ) M_{\gW} U (d_1, d_2 )$,
\item[{\it Elements in $\Omega_3$.}]
$g_3 := M_{\gA} \tilde{T}(a_1,a_2,b_1,b_2) U (c_1, c_2 )$,
\item[{\it Elements in $\Omega_4$.}]
$g_4 := M_{\gA} \tilde{T}(a_1,a_2,b_1,b_2) U (c_1, c_2 ) M_{\gW} U (d_1, d_2 )$.
\end{description}
Regarding $a_1$, $a_2$, $b_1$, $b_2$, $c_1$, $c_2$, $d_1$ and $d_2$ as variables, we set $\mathcal{G}_Q = \{ g_i \mid 1 \leq i \leq 4 \}$ in Main algorithm and Algorithm \ref{alg:AutoN1}. 
Moreover, we set $\mathcal{P}_Q = \{ a_1 a_2 - 1, (b_1^2 - \epsilon b_2^2) t - 1 , r v - 1\}$, where $t$, $r$ and $v$ are extra variables.
If the $3$-rd power map is surjective over $K$, we may assume $\mathcal{P}_Q = \{ a_1 a_2 - 1, (b_1^2 - \epsilon b_2^2) t - 1, r -1 \}$.

\paragraph{Elements in $\tilde\gO_{\varphi}(K)$ in (Dege) case}
Let $Q =2 y w + z^2$, and $\varphi$ its coefficient matrix.
Recall from Subsection \ref{subsec:orthoDege} that we have the Bruhat decomposition of the orthogonal similitude group associated to $\varphi$, say
\begin{eqnarray}
 \tilde\gO_{\varphi}(K)= (\gA \tilde \gT \gU \sqcup \gA \tilde \gT \gU M_{\rm W} \gU) \gV ,\label{eq:BruhatDege}
\end{eqnarray}
where $\gA$, $\tilde\gT$, $\gU$, $M_{\rm W}$ and $\gV$ are the same as in Subsection \ref{subsec:orthoDege}.
Here writing $\gA = \{ 1_4, M_{\gA}:= \mathrm{diag}(1,1,-1,1) \}$, it follows from \eqref{eq:BruhatDege} that we have $\tilde\gO_{\varphi}(K) = \bigcup_{i=1}^4 \Omega_i$, where
\begin{equation*}
\begin{array}{llll}
\Omega_1 := \tilde \gT \gU \gV, & \Omega_2 := \tilde \gT \gU M_{\rm W} \gU \gV, & \Omega_3 := M_{\gA} \tilde \gT \gU \gV, & \Omega_4 := M_{\gA} \tilde \gT \gU M_{\rm W} \gU \gV .
\end{array}
\end{equation*}
Put $\tilde{T}(a_1,a_2,a_3) = \diag(1,a_1 a_3,a_3,a_2 a_3)$,
\[
U (b) =
\begin{pmatrix}
1&0&0&0\\
0&1&b&-b^2/2\\
0&0&1&-b\\
0&0&0&1
\end{pmatrix}, \quad
U (c) =
\begin{pmatrix}
1&0&0&0\\
0&1&c&-c^2/2\\
0&0&1&-c\\
0&0&0&1
\end{pmatrix},
\]
\[
V (d, e_1, e_2, e_3) =
\begin{pmatrix}
d&e_1&e_2&e_3\\
0&1&0&0\\
0&0&1&0\\
0&0&0&1
\end{pmatrix},
\]
where $a_1$, $a_2$, $a_3$, $b$, $c$, $d$, $e_1$, $e_2$ and $e_3$ are elements in $K$ with $a_1 a_2=1$, $a_3 \neq 0$ and $d \neq 0$.
One can verify that elements of $\Omega_{i}$ for each $1 \leq i \leq 4$ are written as follows:
\begin{description}
\item[{\it Elements in $\Omega_1$.}]
$g_1 := \tilde{T} (a_1,a_2, a_3) U (b) V (d, e_1, e_2, e_3)$,
\item[{\it Elements in $\Omega_2$.}]
$g_2 := \tilde{T} (a_1,a_2, a_3) U (b) M_{\rm W} U (c) V (d, e_1, e_2, e_3)$,
\item[{\it Elements in $\Omega_3$.}]
$g_3 := M_{\rm A} \tilde{T} (a_1,a_2, a_3) U (b) V (d, e_1, e_2, e_3)$,
\item[{\it Elements in $\Omega_4$.}]
$g_4 := M_{\rm A} \tilde{T}(a_1,a_2, a_3) U (b) M_{\rm W} U (c) V (d, e_1, e_2, e_3)$.
\end{description}
Regarding $a_1$, $a_2$, $a_3$, $b$, $c$, $d$, $e_1$, $e_2$ and $e_3$ as variables, we set $\mathcal{G}_Q = \{ g_i \mid 1 \leq i \leq 4 \}$ in Main algorithm and Algorithm \ref{alg:AutoN1}. 
Moreover, we set $\mathcal{P}_Q = \{ a_1 a_2 - 1, a_3 s - 1, d t - 1, r v-1 \}$, where $s$, $t$, $r$ and $v$ are extra variables.
If the $3$-rd power map is surjective over $K$, we may assume $\mathcal{P}_Q = \{ a_1 a_2 - 1, a_3 s - 1, d t - 1, r -1 \}$.

\begin{rem}
The Bruhat decomposition of the orthogonal group for each quadratic form has some {\it redundant} variables, and thus we have to find and remove such variables;
otherwise the existence of such variables causes that the dimension of the ideal we construct in Main algorithm can be larger than 0. 
We describe this for the case of {\bf (N1)}.
As we stated above, the {\bf (N1)} case has the $8$ patterns $\Omega_i$ for $1 \leq i \leq 8$.
We here write down an element in $\Omega_1 = \tilde \gT \gU \gU$.
In this case, using the fact $\mathrm{U} \mathrm{U} = \mathrm{U}$, elements in $\Omega_1$ are written as $g_1 := \tilde{T} (a_1,a_2, b_1,b_2,c) U (d_1, d_2 )$ for some $a_1$, $a_2$, $b_1$, $b_2$, $c$, $d_1$ and $d_2$ in $K$ with $a_1 a_2 =1$, $b_1 b_2 =1$ and $c \neq 0$.
However, if one does not use the fact $\mathrm{U} \mathrm{U} = \mathrm{U}$, elements in $\Omega_1$ can be represented as $\tilde{T} (a_1,a_2, b_1,b_2,c) U (d_1, d_2 ) U (e_1, e_2 )$ for some $e_1$ and $e_2$.
Here we have
\[
U(d_1,d_2) U(e_1,e_2) =
\begin{pmatrix}
1&d_1+e_1&d_2+e_2&-d_1 d_2-d_1 e_2-d_2 e_1-e_1 e_2\\
0&1&0&-d_2-e_2\\
0&0&1&-d_1-e_1\\
0&0&0&1
\end{pmatrix} .
\]
Since $d_1 + e_1$ and $d_2 + e_2$ can take independently all elements in $K$, we may assume $e_1=0$ and $e_2=0$.
Regarding unknown entries as variable, we regard only $d_1$ and $d_2$ as variables and assume $e_1=0$ and $e_2=0$ (if one regards $e_1$ as a variables, $d_1$ and $e_1$ are redundant).
In case of each $\Omega_i$ with $i \neq1$, we have the same argument.
\end{rem}

\subsection{Determining the structure of automorphism groups}\label{subsec:isom}

This subsection describes our method to determine the finite group structure of a given subgroup of the the general linear group of degree $4$.
Main algorithm in Subsection \ref{subsec:compaut} computes $\mathrm{Aut}_K (C)$ of a nonhyperelliptic superspecial curve $C$ of genus $4$ over $K$ as a subset of $\mathrm{GL}_4 (K)$.
Each computed subset is finite even if $K$ is an algebraically closed field.
(Indeed, Stichtenoth \cite{Stichtenoth} showed that the automorphism group of a curve of genus $g \ge 2$ defined over a field of characteristic $p>0$ is finite and has cardinality bounded by $16 g^4$.)
Hence our next aim is to determine the finite group structure of each of computed automorphism groups.
For this, we take the following:
For each of computed automorphism groups,
\begin{enumerate}
\item Using a computer algebra system, find the finite group isomorphic to the automorphism group.
Specifically, we choose a candidate for the finite group isomorphic to the automorphism group, and then test whether the automorphism group and the candidate are isomorphic or not.
In our case, we use Magma's built-in function \texttt{IsIsomorphic()}, which judges whether two groups are isomorphic or not. 
\item Based on the result by Magma, determine the finite field isomorphic to the automorphism group.
Specifically, we construct an isomorphism from the automorphism group to the group obtained in (1).
\end{enumerate}
For our choice of the candidate in (1), see Remark \ref{rem:isogroup} below.

\begin{rem}\label{rem:isogroup}
Based on our knowledge to the theory of finite groups, we determine the finite group isomorphic to the automorphism group.
Since the orders of some automorphism groups can be large, we try to find such a finite group by our heuristic.
(cf.\ Most of finite groups with small orders are classified, and hence we can find the isomorphic group from known class of finite groups with small orders.)
The candidates we choose are the cyclic group ${\rm C}_{t}$, the dihedral group ${\rm D}_t$, the symmetric group ${\rm S}_t$, the alternating group ${\rm A}_t$, and their direct products.
Given an automorphism group $G$ with order $n$, proceed with the following steps:
\begin{description}
	\item[{\it Step 1.}] Test whether the group $G$ is isomorphic to the $n$-th cyclic group ${\rm C}_n$. (Note that if $n$ is a prime number $p^{\prime}$, then $G \cong {\rm C}_{p^{\prime}}$.)
	\item[{\it Step 2.}] If $n$ is even, say $n=2k_1$ for some $k_1 \in \Z_{\ge 0}$, then test whether $G \cong {\rm D}_{k_1}$ or not.
	\item[{\it Step 3.}] If $n$ is a factorial number, say $n={k_2} !$ for some $k_2 \in \Z_{\ge 0}$, then decide whether $G \cong {\rm S}_{k_2}$ or not.
	\item[{\it Step 4.}] If $n = {k_3}!/2$ for some $k_3 \in \Z_{\ge 0}$, then test whether $G \cong {\rm A}_{k_3}$ or not.
	\item[{\it Step 5.}] If $G$ is not isomorphic to any finite group in Steps 1 -- 4, then test whether $G$ is isomorphic to a product of some groups (e.g., ${\rm C}_{n_1} \times {\rm C}_{n_2}$ with $n=n_1 n_2$).
\end{description}
Note that this method is not guaranteed to find the finite group isomorphic to any automorphism group, but we succeeded in finding the finite groups isomorphic to automorphism groups of the superspecial curves in Theorems \ref{theo:N1Aut}, \ref{theo:N2Aut}, \ref{theo:DegenerateAut} and \ref{theo:AlcAut} in Section \ref{section4}.
\end{rem}

\section{Main Results}\label{section4}
This section shows our main results on computing the automorphism groups of nonhyperelliptic superspecial curves of genus $4$ over $\mathbb{F}_{11}$.
Specifically, by using algorithms given in Section \ref{section3}, we shall compute the automorphism groups as subgroups of the general linear group of degree $4$ over $\mathbb{F}_{11}$ (or $\overline{\mathbb{F}_{11}}$).
Moreover, we determine the group structure of each automorphism group.

Before proving main theorems, let us recall defining equations of nonhyperelliptic superspecial curves of genus $4$ over $\mathbb{F}_{11}$, which are given in \cite[Section 4]{KH17}.
There are $30$ nonhyperelliptic superspcial curves of genus $4$ over $\F_{11}$ up to isomorphism over $\F_{11}$, and the number of their $\overline{\mathbb{F}_{11}}$-isomorphism classes is $9$. 
Each isomorphism class is defined by an irreducible quadratic form $Q$ and an irreducible cubic form $P$ in $\mathbb{F}_{11}[x,y,z,w]$, where $Q$ has the following three types:
{\bf (N1)} $Q^{\rm (N1)}=2 x w + 2 y z$, {\bf (N2)} $Q^{\rm (N2)} = 2 x w + y^2 - \epsilon z^2$ with $\epsilon \in \F_{11}^{\times} \smallsetminus (\F_{11}^{\times})^2$, and {\bf (Dege)} $Q^{\rm (Dege)} = 2 y w + z^2$.
In the following, we state all pairs of $Q$ and $P$, which define the $\mathbb{F}_{11}$-isomorphism classes of nonhyperelliptic superspcial curves $V(Q,P)$ of genus $4$ over $\F_{11}$.

\paragraph{Recall: Defining equations of nonhyperelliptic superspecial curves of genus $4$ over $\mathbb{F}_{11}$}

\begin{description}
\item[{\bf Case of (N1):}] The $8$ isomorphism classes $C_i^{\rm (N1)}=V (Q^{\rm (N1)}, P_i^{\rm (N1)})$ for $1 \leq i \leq 8$, where $Q^{\rm (N1)} = 2 x w + 2 y z$ and
\begin{eqnarray}
P_1^{({\rm N1})}&= & x^2 y + x^2 z + 2 y^2 z + 5 y^2 w + 9 y z^2 + y z w + 4 z^3 + 3 z^2 w + 10 z w^2 + w^3, \nonumber \\[-0.5mm]
P_2^{({\rm N1})}&= & x^2 y + x^2 z + y^3 + y^2 z + 7 y z^2 + 4 y w^2 + 2 z^3 + 9 z w^2, \nonumber \\[-0.5mm]
P_3^{({\rm N1})}&= & x^2 y + x^2 z + y^3 + 8 y^2 z + 3 y z^2 + 10 y w^2 + 10 z^3 + 10 z w^2, \nonumber \\[-0.5mm]
P_4^{({\rm N1})}&= & x^2 y + x^2 z + y^3 + 9 y^2 z + 2 y^2 w + 3 y z^2 + 3 y z w + 4 y w^2 + 10 z^3 + 2 z^2 w \nonumber \\[-0.5mm]
& &+ 6 z w^2, \nonumber \\[-0.5mm]
P_5^{({\rm N1})}&= & x^2 y + x^2 z + x z^2 + 10 y^2 w + 9 y z^2 + 9 y w^2 + 8 z^3 + 8 z^2 w + 8 z w^2 + 3 w^3, \nonumber \\[-0.5mm]
P_6^{({\rm N1})}&= & x^2 y + x^2 z + x z^2 + 9 y^2 z + 5 y^2 w + y z w + 8 y w^2 + 3 z^3 + 9 z^2 w + 2 z w^2 + 5 w^3, \nonumber \\[-0.5mm]
P_7^{({\rm N1})}&= & x^2 y + x^2 z + x z^2 + 4 y^3 + 2 y^2 z + 10 y^2 w + 3 y z^2 + 8 y z w + 8 y w^2 + 8 z^3 \nonumber \\[-0.5mm] 
& &  + 7 z^2 w + 7 z w^2 + 4 w^3, \nonumber \\[-0.5mm]
P_8^{({\rm N1})}&= & x^2 y + x^2 z + x z^2 + 9 y^3 + 6 y^2 z + 5 y^2 w + 8 y z^2 + 5 y z w + 2 y w^2 + z^3 + 2 z^2 w  \nonumber \\[-0.5mm] 
& & + 7 z w^2 + w^3. \nonumber
\end{eqnarray}

\item[{\bf Case of (N2):}] The $5$ isomorphism classes $C_i^{\rm (N2)}:=V (Q^{\rm (N2)}, P_i^{\rm (N2)})$ for $1 \leq i \leq 5$, where $Q^{\rm (N2)} = 2 x w + y^2 - \epsilon z^2$ with $\epsilon \in \F_{11}^{\times} \smallsetminus (\F_{11}^{\times})^2$ and
\begin{eqnarray}
P_1^{({\rm N2})}& = & x^2 y + x^2 z + x y^2 + 9 x z^2 + 6 y^3 + y^2 z + 5 y^2 w + 3 y z^2 + 9 y w^2 + 8 z^3 + z^2 w \nonumber \\[-0.5mm] 
& &  + 9 z w^2 + 6 w^3, \nonumber \\[-0.5mm]
P_2^{({\rm N2})}&= & x^2 z + 5 y^3 + 4 z w^2, \nonumber \\[-0.5mm]
P_3^{({\rm N2})}&= & x^2 y + x^2 z + 9 y^3 + 8 y^2 z + 2 y z^2 + 4 y w^2 + 9 z^3 + 4 z w^2, \nonumber \\[-0.5mm]
P_4^{({\rm N2})}&= & 8 x^2 y + 2 x^2 z + y^3 + 8 y^2 z + 6 y^2 w + 9 y z^2 + 2 y z w + 5 y w^2 + 9 z^3 + z^2 w \nonumber \\[-0.5mm]
& & + 4 z w^2 + w^3, \nonumber \\[-0.5mm]
P_5^{({\rm N2})}&= & 6 x^2 y + 4 x^2 z + 6 x y^2 + 10 x z^2 + 10 y^3 + 4 y^2 z + 3 y^2 w + 8 y z^2 + 6 y z w + 9 y w^2  \nonumber \\[-0.5mm]
& & + 10 z^3 + z^2 w + z w^2 + 9 w^3. \nonumber
\end{eqnarray}

\item[{\bf Case of (Dege):}] The $17$ isomorphism classes $C_i^{\rm (Dege)}:=V (Q^{\rm (Dege)}, P_i^{\rm (Dege)})$ for $1 \leq i \leq 17$, where $Q^{\rm (Dege)} = 2 y w + z^2$ and
\begin{eqnarray}
P_1^{({\rm Dege})}&= & x^3+y^3+w^3, \nonumber \\[-0.5mm]
P_2^{({\rm Dege})}&= & x^3+y^3+2 w^3, \nonumber \\[-0.5mm]
P_3^{({\rm Dege})}&= & x^3+y^3+z^3+5 w^3, \nonumber \\[-0.5mm]
P_4^{({\rm Dege})}&= & x^3+ x w^2+y^3 , \nonumber \\[-0.5mm]
P_5^{({\rm Dege})}&= & x^3+2 x w^2 +y^3, \nonumber \\[-0.5mm]
P_6^{({\rm Dege})}&= & x^3+x z w+y^3+7 z^3+ w^3, \nonumber \\[-0.5mm]
P_7^{({\rm Dege})}&= & x^3+x w^2+x y z+y^3+5 z^3 + 4 w^3, \nonumber \\[-0.5mm]
P_8^{({\rm Dege})}&= & x^3+6 x w^2 +x y z+y^3+8 z^3+8 w^3 , \nonumber \\[-0.5mm]
P_9^{({\rm Dege})}&= & x^3+5 y^3+2 y z^2+z^3 +z w^2 +4 w^3, \nonumber \\[-0.5mm]
P_{10}^{({\rm Dege})}&= & x^3+y^3+8 y z^2 + z^2 w+2 w^3, \nonumber \\[-0.5mm]
P_{11}^{({\rm Dege})} &= & x^3+2 y^3+2 y z^2+4 z^3 +  z^2 w+3 w^3, \nonumber \\[-0.5mm]
P_{12}^{({\rm Dege})} &= & x^3+2 y^3+4 y z^2 + z^2 w +10 w^3, \nonumber \\[-0.5mm]
P_{13}^{({\rm Dege})}&= & x^3+2 y^3+4 y z^2+z^3+ z^2 w +z w^2 +7 w^3, \nonumber \\[-0.5mm]
P_{14}^{({\rm Dege})}&= & x^3+x y^2+7 x y z+8 x z^2+8 x z w +2 x w^2 +2 y^3+4 y z^2+z^3 + z^2 w+z w^2 \nonumber \\[-0.5mm] 
& & + 7 w^3, \nonumber \\[-0.5mm]
P_{15}^{({\rm Dege})} &= & x^3+5 y^3+3 y z^2+5 z^3 +z^2 w+z w^2 + 10 w^3, \nonumber \\[-0.5mm]
P_{16}^{({\rm Dege})}&= & x^3+6 y^3+2 y z^2+6 z^3 +z^2 w + z w^2+ 6 w^3, \nonumber \\[-0.5mm]
P_{17}^{({\rm Dege})}& =& x^3+10 y^3+6 y z^2+7 z^3 + z^2 w+z w^2. \nonumber 
\end{eqnarray}
\end{description}

\subsection{The automorphism groups of superspecial curves}\label{subsec:Aut}

In this subsection, we determine the automorphism groups of nonhyperelliptic superspecial curves of genus $4$ over the prime field $\mathbb{F}_{11}$.
As in Subsection \ref{subsec:isom}, we denote by ${\rm C}_n$, ${\rm D}_n$, ${\rm S}_n$ and ${\rm A}_n$ the cyclic group of degree $n$, the dihedral group of degree $n$, the symmetric group of degree $n$, and the alternating group of degree $n$ respectively, for each $n$.
For two groups $G$ and $H$, we also denote by $G \times H$ their direct product.

\subsubsection{Case of (N1) over the prime field $\mathbb{F}_{11}$}\label{subsubsec:N1}

First, we determine the group structure of $\mathrm{Aut}_{\mathbb{F}_{11}}(C_i^{\rm (N1)})$, where $C_i^{\rm (N1)} = V ( Q^{\rm (N1)}, P_i^{\rm (N1)})$ with $Q^{\rm (N1)} = 2 x w + 2 y z$ for $1 \leq i \leq 8$.

\begin{thm}\label{theo:N1Aut}
Let $C_i^{(N1)} = V ( Q^{\rm (N1)}, P_i^{\rm (N1)})$ denote the superspecial curve of genus $4$ over $\mathbb{F}_{11}$ defined by $Q^{\rm (N1)}$ and $P_i^{\rm (N1)}$ for each $1 \leq i \leq 8$.
Then we have the following isomorphisms:
\begin{tabular}{clclcl}
$(1)$ & $\Aut_{\mathbb{F}_{11}} ( C_1^{\rm (N1)} ) \cong {\rm C}_6$,  & 
$(2)$ & $\Aut_{\mathbb{F}_{11}} ( C_2^{\rm (N1)} ) \cong {\rm C}_2$, &
$(3)$ & $\Aut_{\mathbb{F}_{11}} ( C_3^{\rm (N1)} ) \cong {\rm D}_4$, \\[2mm]
$(4)$ & $\Aut_{\mathbb{F}_{11}} ( C_4^{\rm (N1)} ) \cong {\rm C}_2$, &
$(5)$ & $\Aut_{\mathbb{F}_{11}} ( C_5^{\rm (N1)} ) \cong {\rm C}_3$, & 
$(6)$ & $\Aut_{\mathbb{F}_{11}} ( C_6^{\rm (N1)} ) \cong {\rm C}_2 \times {\rm C}_2$, \\[2mm]
$(7)$ & $\Aut_{\mathbb{F}_{11}} ( C_7^{\rm (N1)} ) \cong {\rm D}_6$, & 
$(8)$ & $\Aut_{\mathbb{F}_{11}} ( C_8^{\rm (N1)} ) \cong {\rm S}_4$. & &
\end{tabular}
\end{thm}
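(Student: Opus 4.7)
The plan is to apply Algorithm \ref{alg:AutoN1} with input $(Q^{\rm (N1)}, P_i^{\rm (N1)}, 11, 11)$ to each of the eight curves $C_i^{\rm (N1)}$ (so $K = \mathbb{F}_{11}$ throughout) and then to identify abstractly each resulting finite subgroup of $\GL_4(\mathbb{F}_{11})/{\sim}$. Concretely, for each $i \in \{1,\ldots,8\}$ I would iterate over the eight Bruhat cells $\Omega_1,\ldots,\Omega_8$ of Subsection \ref{subsec:compaut}, and for each template matrix $g_k \in \mathcal{G}_{Q^{\rm (N1)}}$ form the residue $P' := (g_k \cdot P_i^{\rm (N1)} - r P_i^{\rm (N1)}) \bmod Q^{\rm (N1)}$. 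Reading off the coefficients of $P'$ in the monomial basis of $\mathbb{F}_{11}[x,y,z,w]_3$ and adjoining the cell constraints $\mathcal{P}_{Q^{\rm (N1)}} = \{a_1 a_2 - 1,\ b_1 b_2 - 1,\ cs-1,\ rv-1\}$ produces a finite polynomial system whose $\mathbb{F}_{11}$-zero locus parametrizes the portion of $G_{\mathbb{F}_{11}}$ lying in $\Omega_k$.

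Next, I would solve each system via Algorithm \ref{alg:VarietyAC1}: compute a lex Gr\"obner basis with the auxiliary variables $r,v,s$ placed smallest, then back-substitute cell variable by cell variable, the termination and correctness being justified by Lemmas \ref{lem:elim2}, \ref{lem:extension}, \ref{lem:elim3} and \ref{lem:extension2}. The zero-dimensionality needed to invoke these lemmas is guaranteed a priori by Stichtenoth's bound $|\Aut(C_i^{\rm (N1)})| \leq 16 g^4$; in practice one simply observes that each Gr\"obner basis contains a univariate polynomial in each variable. Substituting the solutions into the templates $g_k$ yields a finite subset of $\GL_4(\mathbb{F}_{11})$, and quotienting by the scalar relation $g \sim cg$ ($c \in \mathbb{F}_{11}^\times$) gives $\Aut_{\mathbb{F}_{11}}(C_i^{\rm (N1)})$ explicitly as a list of matrices, together with its order $n_i$.

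The last step is to recognize the abstract group structure following the heuristic of Remark \ref{rem:isogroup}: given $n_i$, test successively whether the computed matrix group is isomorphic to ${\rm C}_{n_i}$, ${\rm D}_{n_i/2}$, ${\rm S}_k$ (with $k!=n_i$), ${\rm A}_k$ (with $k!/2=n_i$), or a small direct product, using Magma's \texttt{IsIsomorphic} to verify each guess and produce an explicit isomorphism. For the eight curves this procedure should yield the eight orders $6,2,8,2,3,4,12,24$ and the specific groups listed in (1)--(8); the explicit generators obtained as byproducts then underlie Proposition \ref{prop:N1Aut}.

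The main obstacle I expect is the identification step in the non-abelian cases of order at least $8$, namely $\Aut_{\mathbb{F}_{11}}(C_3^{\rm (N1)}) \cong {\rm D}_4$, $\Aut_{\mathbb{F}_{11}}(C_7^{\rm (N1)}) \cong {\rm D}_6$, and especially $\Aut_{\mathbb{F}_{11}}(C_8^{\rm (N1)}) \cong {\rm S}_4$, where the target of order $24$ must be distinguished from several other groups of the same order (e.g.\ ${\rm A}_4 \times {\rm C}_2$, ${\rm D}_{12}$, ${\rm C}_3 \rtimes {\rm C}_8$, etc.). On the computational side, the cells $\Omega_4$ and $\Omega_8$ carry the most free variables simultaneously, so the lex Gr\"obner basis computations there will be the dominant cost and require the most care; one safeguard is to verify, after the quotient by $\sim$, that the resulting matrix set is genuinely closed under multiplication, confirming that no cell has been inadvertently omitted.
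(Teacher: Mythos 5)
Your proposal is essentially the same approach as the paper's: the paper proves Theorem~\ref{theo:N1Aut} by citing Proposition~\ref{prop:N1Aut}, whose proof runs exactly the pipeline you describe (form the polynomial system from each Bruhat cell $\Omega_1,\ldots,\Omega_8$, solve with Algorithm~\ref{alg:VarietyAC1}, substitute back into the templates $g_k$, quotient by scalars, and identify the abstract group with Magma as in Remark~\ref{rem:isogroup}). Two small deviations worth noting: the paper's implementation uses the grevlex order with $\mathcal{P}_Q=\{a_1a_2-1,\ b_1b_2-1,\ cs-1,\ r-1\}$ (the constraint $r=1$ being justified by surjectivity of cubing on $\mathbb{F}_{11}^\times$, so no $v$ is needed) rather than lex with $rv-1$, and the paper does not need Stichtenoth's bound to justify zero-dimensionality over $\mathbb{F}_{11}$ since $\tilde{\mathrm{O}}_\varphi(\mathbb{F}_{11})$ is automatically finite — that argument matters only for the version over $\overline{\mathbb{F}_{11}}$ in Theorem~\ref{theo:AlcAut}, and even there the paper relies on the removal of redundant variables discussed in Subsection~\ref{subsec:compaut}.
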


\begin{proof}
To simplify the notation, we set $Q:=Q^{\rm (N1)}$ and $P_i:=P_i^{\rm (N1)}$ for $1 \leq i \leq 8$ through this proof.
Putting $G_i := \{\, g \in \tilde{\gO}_{\varphi}(\mathbb{F}_{11}) \mid g \cdot P_i \equiv r P_i \bmod{Q} \mbox{ for some } r \in \mathbb{F}_{11}^{\times} \,\}$, we have $\Aut_{\mathbb{F}_{11}}(C_i^{\rm (N1)} )=G_i /{\sim}$, where $\varphi$ denotes the matrix associated to the quadratic form $Q$, and we write $g \sim cg$ for $g \in \tilde{\gO}_{\varphi}(\mathbb{F}_{11})$ and $c \in \mathbb{F}_{11}^{\times}$.
We also recall that $g \cdot P := P ( (x, y, z, w) \cdot {}^t g )$ for $g \in \mathrm{GL}_{4} (\mathbb{F}_{11})$ and a cubic form $P \in \mathbb{F}_{11}[x,y,z,w]$.
By Proposition \ref{prop:N1Aut} in Subsection \ref{subsubsec:compN1}, we have explicit generators of $G_i / {\sim}$ for each $i$.

Here, we show only the statement (1) since the other cases (2) -- (8) are proved in ways similar to (1).
Proposition \ref{prop:N1Aut} says that $G_1 / {\sim}$ is generated by
\[
\begin{pmatrix}
2&4&5&1\\
3&6&5&1\\
7&7&1&10\\
6&6&10&1
\end{pmatrix},
\]
whose order is $6$, and hence $G_1 / {\sim}$ is isomorphic to ${\rm C}_6$.
\end{proof}

In Table \ref{tab:N1Aut}, we summarize the results in Theorem \ref{theo:N1Aut}.

\renewcommand{\arraystretch}{1.3}
\begin{table}[H]
\begin{center}
\begin{tabular}{ccc} \hline
Superspecial curves $C$ over $\F_{11}$ & $\Aut_{\F_{11}}{(C)}$ & $\# \Aut_{\F_{11}}{(C)}$ \\ \hline
$C_1^{\rm (N1)} = V(Q^{({\rm N1})},P_1^{({\rm N1})})$  & ${\rm C}_6$ & $6$\\
$C_2^{\rm (N1)} = V(Q^{({\rm N1})},P_2^{({\rm N1})})$  & ${\rm C}_2$ & $2$\\
$C_3^{\rm (N1)} = V(Q^{({\rm N1})},P_3^{({\rm N1})})$ & ${\rm D}_4$ & $8$ \\
$C_4^{\rm (N1)} = V(Q^{({\rm N1})},P_4^{({\rm N1})})$ & ${\rm C}_2$ & $2$ \\
$C_5^{\rm (N1)} = V(Q^{({\rm N1})},P_5^{({\rm N1})})$ & ${\rm C}_3$ & $3$ \\
$C_6^{\rm (N1)} = V(Q^{({\rm N1})},P_6^{({\rm N1})})$  & ${\rm C}_2 \times {\rm C}_2$ & $4$\\
$C_7^{\rm (N1)} = V(Q^{({\rm N1})},P_7^{({\rm N1})})$ & ${\rm D}_6$ & $12$ \\
$C_8^{\rm (N1)} = V(Q^{({\rm N1})},P_8^{({\rm N1})})$ & ${\rm S}_4$ & $24$ \\ \hline
\end{tabular}
\caption{The automorphism groups $\mathrm{Aut}_{\mathbb{F}_{11}} (C_i^{\rm (N1)})$ of the superspecial curves $C_i^{{\rm (N1)}}= V (Q^{\rm (N1)}, P_i^{\rm (N1)})$ over $\mathbb{F}_{11}$ for $1 \leq i \leq 8$ in the case of {\bf (N1)}.
Here $Q^{\rm (N1)} = 2 x w + 2 y z$, and each $P_i^{\rm (N1)}$ is defined at the beginning of this section (Section \ref{section4}).}\label{tab:N1Aut}
\end{center}
\end{table}

\subsubsection{Case of (N2) over the prime field $\mathbb{F}_{11}$}\label{subsubsec:N2}

Next, we determine the group structure of $\mathrm{Aut}_{\mathbb{F}_{11}}(C_i^{\rm (N2)})$, where $C_i^{\rm (N2)} = V ( Q^{\rm (N2)}, P_i^{\rm (N2)})$ with $Q^{\rm (N2)} = 2 x w + y^2 - \epsilon z^2$ and $\epsilon \in \F_{11}^{\times} \smallsetminus (\F_{11}^{\times})^2$ for $1 \leq i \leq 5$.

\begin{thm}\label{theo:N2Aut}
Let $C_i^{\rm (N2)} = V ( Q^{\rm (N2)}, P_i^{\rm (N2)})$ denote the superspecial curve of genus $4$ over $\mathbb{F}_{11}$ defined by $Q^{\rm (N2)}$ and $P_i^{\rm (N2)}$ for each $1 \leq i \leq 5$.
Then we have the following isomorphisms:
\begin{tabular}{clclcl}
$(1)$ & $\Aut_{\mathbb{F}_{11}} ( C_1^{\rm (N2)} ) \cong {\rm D}_6$,  & 
$(2)$ & $\Aut_{\mathbb{F}_{11}} ( C_2^{\rm (N2)} ) \cong {\rm C}_2 \times {\rm C}_2$, &
$(3)$ & $\Aut_{\mathbb{F}_{11}} ( C_3^{\rm (N2)} ) \cong {\rm C}_2 \times {\rm C}_2$, \\[2mm]
$(4)$ & $\Aut_{\mathbb{F}_{11}} ( C_4^{\rm (N2)} ) \cong {\rm C}_4$, &
$(5)$ & $\Aut_{\mathbb{F}_{11}} ( C_5^{\rm (N2)} ) \cong {\rm C}_6$. &  &
\end{tabular}
\end{thm}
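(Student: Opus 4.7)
The plan is to follow exactly the template laid out in the proof of Theorem \ref{theo:N1Aut}, but replace the (N1) Bruhat data by the (N2) data from Subsection \ref{subsec:orthoN2}. First I set $Q := Q^{\rm (N2)} = 2xw + y^2 - \epsilon z^2$ and $P_i := P_i^{\rm (N2)}$, and let $\varphi$ be the associated coefficient matrix. As in Section \ref{section3}, one has $\Aut_{\mathbb{F}_{11}}(C_i^{\rm (N2)}) = G_i / {\sim}$, where
\[
G_i = \{\, g \in \tilde{\gO}_{\varphi}(\mathbb{F}_{11}) \mid g \cdot P_i \equiv r P_i \pmod{Q} \text{ for some } r \in \mathbb{F}_{11}^{\times} \,\},
\]
and $g \sim cg$ for $c \in \mathbb{F}_{11}^\times$.

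Next I invoke the (N2) Bruhat decomposition $\tilde\gO_{\varphi}(\mathbb{F}_{11}) = \bigcup_{j=1}^{4}\Omega_j$ from Subsection \ref{subsec:orthoN2}, so that each $g \in G_i$ is parameterized by at most one of the four explicit patterns $g_1, g_2, g_3, g_4$ in the (N2) case, with variables $a_1, a_2, b_1, b_2, c_1, c_2, d_1, d_2$ subject to $a_1 a_2 = 1$ and $b_1^2 - \epsilon b_2^2 \neq 0$. Applying Main algorithm (Algorithm \ref{alg:AutoN1}) with $\mathcal{G}_Q = \{g_1, g_2, g_3, g_4\}$ and $\mathcal{P}_Q = \{a_1 a_2 - 1,\ (b_1^2 - \epsilon b_2^2)t - 1,\ r - 1\}$ (using that the $3$rd power map is surjective on $\mathbb{F}_{11}^\times$, since $\gcd(3,10)=1$), one reduces each case to a zero-dimensional polynomial system, which is solved by Algorithm \ref{alg:VarietyAC1}. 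This step is the computational heart; I defer the explicit lists of generators to Proposition \ref{prop:N2Aut}.

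Having the explicit generators of each $G_i / {\sim}$ as matrices in $\mathrm{GL}_4(\mathbb{F}_{11})$, I determine the finite group structure by the heuristic of Subsection \ref{subsec:isom} (Remark \ref{rem:isogroup}): compute the order $n$ of the group and test in turn the standard candidates ${\rm C}_n,\ {\rm D}_{n/2},\ {\rm S}_k,\ {\rm A}_k,\ {\rm C}_{n_1}\times{\rm C}_{n_2}$, etc., using Magma's \texttt{IsIsomorphic}. Concretely, I expect: for $i = 1$ a group of order $12$ isomorphic to ${\rm D}_6$; for $i = 2, 3$ a group of order $4$ isomorphic to ${\rm C}_2 \times {\rm C}_2$ (ruled out of ${\rm C}_4$ by checking that every nontrivial generator has order $2$); for $i = 4$ a cyclic group of order $4$ generated by a single order-$4$ matrix; and for $i = 5$ the cyclic group ${\rm C}_6$ generated by a single order-$6$ matrix.

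The main obstacle is not the group-theoretic identification, which is mechanical once generators are in hand, but the Gr\"obner basis computations inside Algorithm \ref{alg:VarietyAC1} for each of the four Bruhat cells and each of the five curves. For the degenerate cell where the orthogonal relation $b_1^2 - \epsilon b_2^2 \neq 0$ together with $\epsilon$ being a nonsquare forces the solutions into a quadratic extension, I must be careful to restrict to $\mathbb{F}_{11}$-rational solutions (handled by setting $q' = q$ in Algorithm \ref{alg:AutoN1}); the analogous over-$\overline{\mathbb{F}_{11}}$ statement will be addressed separately in Theorem \ref{theo:AlcAut}. Finally, I will compile the outcome into a table parallel to Table \ref{tab:N1Aut} for reference.
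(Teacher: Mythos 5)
Your proposal matches the paper's proof of Theorem \ref{theo:N2Aut} essentially verbatim: reduce to the quotient $G_i/{\sim}$, parameterize via the four (N2) Bruhat cells with $\mathcal{P}_Q = \{a_1a_2-1,\ (b_1^2-\epsilon b_2^2)t-1,\ r-1\}$ and $r\equiv 1$ justified by surjectivity of cubing on $\mathbb{F}_{11}^\times$, run Main algorithm over $\mathbb{F}_{11}$, and read off the group structures from the explicit generators in Proposition \ref{prop:N2Aut} — all as the paper does, with the explicit $\mathrm{D}_6$ identification for $i=1$. One small side remark is off: the condition $b_1^2-\epsilon b_2^2\neq 0$ with $\epsilon$ a nonsquare is automatic over $\mathbb{F}_{11}$ once $(b_1,b_2)\neq(0,0)$, so no quadratic extension is forced in the $\mathbb{F}_{11}$-rational computation (the restriction is handled simply by taking $q'=q$), but this does not affect the argument.
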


\begin{proof}
To simplify the notation, we set $Q:=Q^{\rm (N2)}$ and $P_i:=P_i^{\rm (N2)}$ for $1 \leq i \leq 5$ through this proof.
Putting $G_i := \{\, g \in \tilde{\gO}_{\varphi}(\mathbb{F}_{11}) \mid g \cdot P_i \equiv r P_i \bmod{Q} \mbox{ for some } r \in \mathbb{F}_{11}^{\times} \,\}$, we have $\Aut_{\mathbb{F}_{11}}(C_i^{\rm (N2)} )=G_i /{\sim}$, where $\varphi$ denotes the matrix associated to $Q$, and we write $g \sim cg$ for $g \in \tilde{\gO}_{\varphi}(\mathbb{F}_{11})$ and $c \in \mathbb{F}_{11}^{\times}$.
We also recall that $g \cdot P := P ( (x, y, z, w) \cdot {}^t g )$ for $g \in \mathrm{GL}_{4} (\mathbb{F}_{11})$ and a cubic form $P \in \mathbb{F}_{11}[x,y,z,w]$.
By Proposition \ref{prop:N2Aut} in Subsection \ref{subsubsec:compN2}, we have explicit generators of $G_i / {\sim}$ for each $i$.

Here, we show only the statement (1) for the same reason as that in the proof of Theorem \ref{theo:N1Aut}.
Proposition \ref{prop:N2Aut} says that $G_1 / {\sim}$ is generated by
\[
a:=
\begin{pmatrix}
1&6&9&5\\
10&8&6&6\\
1&8&1&1\\
6&10&9&1
\end{pmatrix},
\quad \text{and} \quad
b:=
\begin{pmatrix}
1&0&8&5\\
10&1&3&6\\
1&0&7&1\\
6&1&2&1
\end{pmatrix},
\]
whose orders are $2$ and $6$ respectively.
The homomorphism given by $a \mapsto (1, 6) (2, 5) (3, 4)$ and $b \mapsto (1, 2, 3, 4, 5, 6)$ defines an isomorphism between $G_1 / \sim$ and ${\rm D}_6$, where we identify ${\rm D}_6$ with the subgroup of $\mathrm{S}_6$ generated by the permutations $(1,6)(2,5)(3,4)$ and $(1,2,3,4,5,6)$.
\end{proof}

Table \ref{tab:N2Aut} summarizes the results in Theorem \ref{theo:N2Aut}.

\renewcommand{\arraystretch}{1.3}
\begin{table}[H]
\begin{center}
\begin{tabular}{ccc} \hline
Superspecial curves $C$ over $\F_{11}$ & $\Aut_{\F_{11}}{(C)}$ & $\# \Aut_{\F_{11}}{(C)}$ \\ \hline
$C_1^{\rm (N2)} = V(Q^{({\rm N2})},P_1^{({\rm N2})})$ & ${\rm D}_6$ & $12$ \\
$C_2^{\rm (N2)} = V(Q^{({\rm N2})},P_2^{({\rm N2})})$ & ${\rm C}_2 \times {\rm C}_2$ & $4$ \\
$C_3^{\rm (N2)} = V(Q^{({\rm N2})},P_3^{({\rm N2})})$ & ${\rm C}_2 \times {\rm C}_2$ & $4$ \\
$C_4^{\rm (N2)} = V(Q^{({\rm N2})},P_4^{({\rm N2})})$ & ${\rm C}_4$ & $4$ \\
$C_5^{\rm (N2)} = V(Q^{({\rm N2})},P_5^{({\rm N2})})$ & ${\rm C}_6$ & $6$\\ \hline
\end{tabular}
\caption{The automorphism groups $\mathrm{Aut}_{\mathbb{F}_{11}} (C_i^{\rm (N2)})$ of the superspecial curves $C_i^{{\rm (N2)}}= V (Q^{\rm (N2)}, P_i^{\rm (N2)})$ over $\mathbb{F}_{11}$ for $1 \leq i \leq 5$ in the case of {\bf (N2)}.
Here $Q^{\rm (N2)} = 2 x w + y^2 - \epsilon z^2$ with $\epsilon \in \F_{11}^{\times} \smallsetminus (\F_{11}^{\times})^2$, and each $P_i^{\rm (N2)}$ is defined at the beginning of this section (Section \ref{section4}).}\label{tab:N2Aut}
\end{center}
\end{table}

\subsubsection{Case of (Dege) over the prime filed $\mathbb{F}_{11}$}\label{subsubsec:Dege}

We determine the group structure of $\mathrm{Aut}_{\mathbb{F}_{11}}(C_i^{\rm (Dege)})$, where $C_i^{\rm (Dege)} = V ( Q^{\rm (Dege)}, P_i^{\rm (Dege)})$ with $Q^{\rm (Dege)} = 2 y w + z^2$ for $1 \leq i \leq 17$.

\begin{thm}\label{theo:DegenerateAut}
Let $C_i^{{\rm (Dege)}} = V ( Q^{\rm (Dege)}, P_i^{\rm (Dege)})$ denote the superspecial curve of genus $4$ over $\mathbb{F}_{11}$ defined by $Q^{\rm (Dege)}$ and $P_i^{\rm (Dege)}$ for each $1 \leq i \leq 17$.
Then we have the following isomorphisms:

\begin{tabular}{clcl}
$(1)$ & $\Aut_{\mathbb{F}_{11}} ( C_1^{\rm (Dege)} ) \cong {\rm C}_2 \times {\rm C}_2$,  & 
$(2)$ & $\Aut_{\mathbb{F}_{11}} ( C_2^{\rm (Dege)} ) \cong {\rm C}_2 \times {\rm C}_2$, \\[2mm]
$(3)$ & $\Aut_{\mathbb{F}_{11}} ( C_3^{\rm (Dege)} ) \cong {\rm C}_2 \times {\rm C}_2$, &
$(4)$ & $\Aut_{\mathbb{F}_{11}} ( C_4^{\rm (Dege)} ) \cong {\rm C}_2$, \\[2mm]
$(5)$ & $\Aut_{\mathbb{F}_{11}} ( C_5^{\rm (Dege)} ) \cong {\rm C}_2$, & 
$(6)$ & $\Aut_{\mathbb{F}_{11}} ( C_6^{\rm (Dege)} ) \cong \{ e \}$, \\[2mm]
$(7)$ & $\Aut_{\mathbb{F}_{11}} ( C_7^{\rm (Dege)} ) \cong {\rm C}_2$, &
$(8)$ & $\Aut_{\mathbb{F}_{11}} ( C_8^{\rm (Dege)} ) \cong \{ e \}$, \\[2mm]
$(9)$ & $\Aut_{\mathbb{F}_{11}} ( C_9^{\rm (Dege)} ) \cong {\rm C}_6$, &
$(10)$ & $\Aut_{\mathbb{F}_{11}} ( C_{10}^{\rm (Dege)} ) \cong {\rm D}_6$, \\[2mm]
$(11)$ & $\Aut_{\mathbb{F}_{11}} ( C_{11}^{\rm (Dege)} ) \cong {\rm D}_6$,  & 
$(12)$ & $\Aut_{\mathbb{F}_{11}} ( C_{12}^{\rm (Dege)} ) \cong {\rm S}_4$, \\[2mm]
$(13)$ & $\Aut_{\mathbb{F}_{11}} ( C_{13}^{\rm (Dege)} ) \cong {\rm C}_4$, &
$(14)$ & $\Aut_{\mathbb{F}_{11}} ( C_{14}^{\rm (Dege)} ) \cong {\rm C}_2$, \\[2mm]
$(15)$ & $\Aut_{\mathbb{F}_{11}} ( C_{15}^{\rm (Dege)} ) \cong {\rm C}_6$, &  
$(16)$ & $\Aut_{\mathbb{F}_{11}} ( C_{16}^{\rm (Dege)} ) \cong {\rm C}_3$, \\[2mm]
$(17)$ & $\Aut_{\mathbb{F}_{11}} ( C_{17}^{\rm (Dege)} ) \cong {\rm D}_4$. & &
\end{tabular}
\end{thm}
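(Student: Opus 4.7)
The plan is to prove Theorem \ref{theo:DegenerateAut} in direct parallel with the proofs of Theorems \ref{theo:N1Aut} and \ref{theo:N2Aut}. First I would fix the shorthand $Q := Q^{\rm (Dege)}$ and $P_i := P_i^{\rm (Dege)}$, let $\varphi$ be the coefficient matrix of $Q$, and set
\[
G_i := \{\, g \in \tilde{\gO}_{\varphi}(\mathbb{F}_{11}) \mid g \cdot P_i \equiv r P_i \bmod Q \text{ for some } r \in \mathbb{F}_{11}^{\times} \,\},
\]
so that $\Aut_{\mathbb{F}_{11}}(C_i^{\rm (Dege)}) = G_i/{\sim}$ under $g \sim cg$. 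The essential input will be Proposition \ref{prop:DegeAut}, which supplies explicit generators for each $G_i/{\sim}$ as matrices in $\mathrm{GL}_4(\mathbb{F}_{11})$; the theorem amounts to reading off the abstract group structure from those generators.

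Next I would invoke Algorithm \ref{alg:AutoN1} specialized to the (Dege) case described at the end of Section \ref{section3}: parametrize $\tilde{\gO}_{\varphi}(\mathbb{F}_{11})$ using the Bruhat decomposition of Subsection \ref{subsec:orthoDege}, namely the four strata $\Omega_1,\dots,\Omega_4$ with parameters $a_1, a_2, a_3, b, c, d, e_1, e_2, e_3$ and auxiliary relations $\mathcal{P}_Q = \{a_1 a_2 - 1,\ a_3 s - 1,\ d t - 1,\ r - 1\}$ (using the surjectivity of the cube map on $\mathbb{F}_{11}^{\times}$). For each stratum I adjoin the coefficient equations coming from $g_j \cdot P_i - r P_i \equiv 0 \pmod Q$, and then solve the resulting zero-dimensional system over $\mathbb{F}_{11}$ using Algorithm \ref{alg:VarietyAC1}. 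Running this for all $17$ cubics $P_i^{\rm (Dege)}$ produces complete finite lists of matrices representing the elements of $G_i/{\sim}$.

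Once the generators are in hand, I would apply the group-identification recipe of Subsection \ref{subsec:isom} and Remark \ref{rem:isogroup}: record the order $n$ of $G_i/{\sim}$, then test candidates $\mathrm{C}_n$, $\mathrm{D}_{n/2}$, $\mathrm{S}_k$, $\mathrm{A}_k$, and their direct products, confirming the chosen candidate by exhibiting an explicit homomorphism. Cases (6) and (8) reduce to checking that the computed set is trivial, while the cyclic and Klein four cases (1)--(5), (7), (9), (13)--(16) are settled at once by the orders and commutation of the generators. For the larger groups (10)--(12) and (17), that is $\mathrm{D}_6$, $\mathrm{D}_6$, $\mathrm{S}_4$, $\mathrm{D}_4$, I would send the generators to concrete permutations (as done for $\mathrm{D}_6$ in the proof of Theorem \ref{theo:N2Aut}(1)) and verify the defining relations directly.

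The hard part is not any single step but the sheer volume of bookkeeping across seventeen pairs $(Q^{\rm (Dege)}, P_i^{\rm (Dege)})$: the coefficient ideals are large and the Bruhat strata are four-fold, so the Gr\"obner basis computation in Algorithm \ref{alg:VarietyAC1} must be executed reliably in each case. A secondary subtlety is that the degenerate quadratic form gives a larger parameter group $\gV$ of dimension four, which inflates the search space compared to the (N1) and (N2) settings; I would therefore expect the $\mathrm{S}_4$ case (12) to be the most demanding, since one must both certify $|G_{12}/{\sim}| = 24$ and exhibit generators satisfying the relations of $\mathrm{S}_4$. For the group-isomorphism verification in these larger cases I would rely on Magma's \texttt{IsIsomorphic} as a cross-check, and then extract the explicit generator images recorded in Proposition \ref{prop:DegeAut}.
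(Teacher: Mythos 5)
Your proposal matches the paper's proof of Theorem \ref{theo:DegenerateAut}: the paper likewise identifies $\Aut_{\mathbb{F}_{11}}(C_i^{\rm(Dege)})$ with $G_i/{\sim}$, cites Proposition \ref{prop:DegeAut} for explicit generators produced by Algorithm \ref{alg:AutoN1} via the (Dege) Bruhat decomposition, and then reads off the abstract group structure (illustrating with case (1), where the two order-$2$ generators give ${\rm C}_2\times{\rm C}_2$ via an explicit homomorphism to a permutation group). Same decomposition, same key input, same group-identification step.
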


\begin{proof}
To simplify the notation, we set $Q:=Q^{\rm (Dege)}$ and $P_i:=P_i^{\rm (Dege)}$ for $1 \leq i \leq 17$ through this proof.
Putting $G_i := \{\, g \in \tilde{\gO}_{\varphi}(\mathbb{F}_{11}) \mid g \cdot P_i \equiv r P_i \bmod{Q} \mbox{ for some } r \in \mathbb{F}_{11}^{\times} \,\}$, we have $\Aut_{\mathbb{F}_{11}}(C_i^{\rm (Dege)} )=G_i /{\sim}$, where $\varphi$ denotes the matrix associated to $Q$, and we write $g \sim cg$ for $g \in \tilde{\gO}_{\varphi}(\mathbb{F}_{11})$ and $c \in \mathbb{F}_{11}^{\times}$.
We also recall that $g \cdot P := P ( (x, y, z, w) \cdot {}^t g )$ for $g \in \mathrm{GL}_{4} (\mathbb{F}_{11})$ and a cubic form $P \in \mathbb{F}_{11}[x,y,z,w]$.
By Proposition \ref{prop:DegeAut} in Subsection \ref{subsubsec:compDege}, we have explicit generators of $G_i / {\sim}$ for each $i$.

Here, we show only the statement (1) for the same reason as that in the proof of Theorem \ref{theo:N1Aut}.
Proposition \ref{prop:DegeAut} says that $G_1 / {\sim}$ is generated by
\[
a:={\diag}(1,1,-1,1),
\quad \text{and} \quad
b:=
\begin{pmatrix}
1&0&0&0\\
0&0&0&1\\
0&0&1&0\\
0&1&0&0
\end{pmatrix},
\]
whose orders are $2$ and $2$ respectively.
The homomorphism given by $a \mapsto (1, 2)$ and $b \mapsto (3,4)$ defines an isomorphism between $G_1 / \sim$ and ${\rm C}_2 \times {\rm C}_2$, where we identify ${\rm C}_2 \times {\rm C}_2$ with the subgroup of $\mathrm{S}_4$ generated by the permutations $(1,2)$ and $(3,4)$.
\end{proof}

In Table \ref{tab:DegeAut}, we summarize the results in Theorem \ref{theo:DegenerateAut}.

\renewcommand{\arraystretch}{1.3}
\begin{table}[H]
\begin{center}
\begin{tabular}{ccc} \hline
Superspecial curves $C$ over $\F_{11}$ & $\Aut_{\F_{11}}{(C)}$ & $\# \Aut_{\F_{11}}{(C)}$ \\ \hline
$C_1^{\rm (Dege)} = V(Q^{({\rm Dege})},P_1^{({\rm Dege})})$ & ${\rm C}_2 \times {\rm C}_2$ & $4$ \\
$C_2^{\rm (Dege)} = V(Q^{({\rm Dege})},P_2^{({\rm Dege})})$ & ${\rm C}_2 \times {\rm C}_2$ & $4$ \\
$C_3^{\rm (Dege)} = V(Q^{({\rm Dege})},P_3^{({\rm Dege})})$ & ${\rm C}_2 \times {\rm C}_2$ & $4$ \\
$C_4^{\rm (Dege)} = V(Q^{({\rm Dege})},P_4^{({\rm Dege})})$ & ${\rm C}_2$ & $2$ \\
$C_5^{\rm (Dege)} = V(Q^{({\rm Dege})},P_5^{({\rm Dege})})$ & ${\rm C}_2$ & $2$ \\
$C_6^{\rm (Dege)} = V(Q^{({\rm Dege})},P_6^{({\rm Dege})})$ & $\{e\}$ & $1$\\
$C_7^{\rm (Dege)} = V(Q^{({\rm Dege})},P_7^{({\rm Dege})})$ & ${\rm C}_2$ & $2$\\
$C_8^{\rm (Dege)} = V(Q^{({\rm Dege})},P_8^{({\rm Dege})})$ & $\{e\}$ & $1$ \\
$C_9^{\rm (Dege)} = V(Q^{({\rm Dege})},P_9^{({\rm Dege})})$ & ${\rm C}_6$ & $6$ \\
$C_{10}^{\rm (Dege)} = V(Q^{({\rm Dege})},P_{10}^{({\rm Dege})})$ & ${\rm D}_6$ & $12$\\
$C_{11}^{\rm (Dege)} = V(Q^{({\rm Dege})},P_{11}^{({\rm Dege})})$ & ${\rm D}_6$ & $12$ \\
$C_{12}^{\rm (Dege)} = V(Q^{({\rm Dege})},P_{12}^{({\rm Dege})})$ & ${\rm S}_4$ & $24$ \\
$C_{13}^{\rm (Dege)} = V(Q^{({\rm Dege})},P_{13}^{({\rm Dege})})$ & ${\rm C}_4$ & $4$ \\
$C_{14}^{\rm (Dege)} = V(Q^{({\rm Dege})},P_{14}^{({\rm Dege})})$ & ${\rm C}_2$ & $2$ \\
$C_{15}^{\rm (Dege)} = V(Q^{({\rm Dege})},P_{15}^{({\rm Dege})})$ & ${\rm C}_6$ & $6$ \\
$C_{16}^{\rm (Dege)} = V(Q^{({\rm Dege})},P_{16}^{({\rm Dege})})$ & ${\rm C}_3$ & $3$ \\
$C_{17}^{\rm (Dege)} = V(Q^{({\rm Dege})},P_{17}^{({\rm Dege})})$ & ${\rm D}_4$ & $8$ \\ \hline
\end{tabular}
\caption{The automorphism groups $\mathrm{Aut}_{\mathbb{F}_{11}} (C_i^{\rm (Dege)})$ of the superspecial curves $C_i^{{\rm (Dege)}}= V (Q^{\rm (Dege)}, P_i^{\rm (Dege)})$ over $\mathbb{F}_{11}$ for $1 \leq i \leq 17$ in the case of {\bf (Dege)}.
Here $Q^{\rm (Dege)} = 2 y w + z^2$, and each $P_i^{\rm (Dege)}$ is defined at the beginning of this section (Section \ref{section4}).}\label{tab:DegeAut}
\end{center}
\end{table}

\subsubsection{Necessary condition on the sum of the orders of automorphism groups}
Our main theorems (Theorems \ref{theo:N1Aut} -- \ref{theo:DegenerateAut}) are proved with the help of computational results given in Subsection \ref{subsec:comp}.
In this subsection, we shall confirm that our results satisfy a theoretically necessary condition.
Let $C_1, \ldots , C_k$ be all $\mathbb{F}_{11}$-isomorphism classes of superspecial curves of genus $4$ over $\mathbb{F}_{11}$ such that $C_i \cong C_j$ over the algebraic closure $\overline{\mathbb{F}_{11}}$ for all $1 \leq i < j \leq k$ .
It is known that the sum of the reciprocals of $\# \Aut_{\mathbb{F}_{11}}(C_i)$ for $1 \leq i \leq k$ is equal to $1$.
In Corollary \ref{cor:sumaut} below, we prove that all orders obtained with the help of our computational results satisfy this necessary condition.
This supports the truth of the main theorems (Theorems \ref{theo:N1Aut} -- \ref{theo:DegenerateAut}) obtained by computational results.

\begin{cor}\label{cor:sumaut}
Let $C_1, \ldots , C_k$ be all $\mathbb{F}_{11}$-isomorphism classes of nonhyperelliptic superspecial curves of genus $4$ over $\mathbb{F}_{11}$ such that $C_i \cong C_j$ over the algebraic closure $\overline{\mathbb{F}_{11}}$ for all $1 \leq i < j \leq k$.
Then the sum of the reciprocals of $\# \Aut_{\mathbb{F}_{11}}(C_i)$ for $1 \leq i \leq k$ is equal to $1$.
\end{cor}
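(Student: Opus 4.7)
The plan is to derive Corollary \ref{cor:sumaut} from the orbit-stabilizer identity for $\mathbb{F}_{11}$-twists classified by Galois cohomology. Fix one of the nine $\overline{\mathbb{F}_{11}}$-isomorphism classes of nonhyperelliptic superspecial curves of genus $4$ from Corollary \ref{cor:KH17overF11}, represented by a curve $C$ defined over $\mathbb{F}_{11}$, and set $A := \Aut_{\overline{\mathbb{F}_{11}}}(C_{\overline{\mathbb{F}_{11}}})$, which is a finite group by Stichtenoth's bound recalled in Subsection \ref{subsec:isom}. Standard descent identifies the set of $\mathbb{F}_{11}$-isomorphism classes of $\mathbb{F}_{11}$-forms of $C$ with the pointed cohomology set $H^{1}(\Gal(\overline{\mathbb{F}_{11}}/\mathbb{F}_{11}), A)$ (cf.\ \cite[Chap.\ III, \S 1]{S}).

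Since $\Gal(\overline{\mathbb{F}_{11}}/\mathbb{F}_{11})$ is topologically generated by the Frobenius $\sigma$, each continuous $1$-cocycle is determined by its value $a \in A$ at $\sigma$, and two cocycles $a,a'$ are cohomologous precisely when $a' = b^{-1} a\, \sigma(b)$ for some $b \in A$. Hence $H^{1}(\Gal(\overline{\mathbb{F}_{11}}/\mathbb{F}_{11}), A)$ is the set of orbits of the $\sigma$-twisted conjugation action $b \cdot a := b\, a\, \sigma(b)^{-1}$ of $A$ on itself, and under this bijection the $\mathbb{F}_{11}$-automorphism group of the twist $C^{a}$ corresponds to the stabilizer of $a$:
\begin{equation*}
\Aut_{\mathbb{F}_{11}}(C^{a}) \;\cong\; \{\, b \in A : b\, a\, \sigma(b)^{-1} = a \,\}.
\end{equation*}
Indeed, after twisting by $a$ the Frobenius action on $A$ becomes $\sigma_{a}(b) = a\, \sigma(b)\, a^{-1}$, and its fixed points are precisely this stabilizer.

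Applying the orbit-stabilizer theorem to this action of $A$ on itself then yields
\begin{equation*}
|A| \;=\; \sum_{i=1}^{k} |\mathcal{O}_i| \;=\; \sum_{i=1}^{k} \frac{|A|}{|\Aut_{\mathbb{F}_{11}}(C_i)|},
\end{equation*}
where $\mathcal{O}_{1},\ldots,\mathcal{O}_{k}$ are the orbits, equivalently the $k$ twists $C_{1},\ldots,C_{k}$. Dividing by $|A|$ yields $\sum_{i=1}^{k} 1/|\Aut_{\mathbb{F}_{11}}(C_{i})| = 1$, which is the desired identity. The only nontrivial point is the identification of $\Aut_{\mathbb{F}_{11}}(C^{a})$ with the $\sigma$-twisted centralizer of $a$; once that is in place the rest is formal. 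In practice one can (and in Section 5 presumably does) double-check the identity by summing the reciprocals of the orders computed in Theorems \ref{theo:N1Aut}, \ref{theo:N2Aut}, and \ref{theo:DegenerateAut} within each of the nine $\overline{\mathbb{F}_{11}}$-grouped families of twists, which is exactly the compatibility check with Galois cohomology announced in the introduction.
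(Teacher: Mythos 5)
Your proposal is correct, but it takes a genuinely different route from the paper. The paper treats Corollary~\ref{cor:sumaut} as a \emph{numerical consistency check}: its proof lists the nine $\overline{\mathbb{F}_{11}}$-classes explicitly, reads off the orders $\#\Aut_{\mathbb{F}_{11}}(C_i)$ from Theorems~\ref{theo:N1Aut}, \ref{theo:N2Aut}, and \ref{theo:DegenerateAut}, and verifies arithmetically in each of the nine cases that the reciprocals sum to $1$ (e.g.\ $\tfrac16+\tfrac14+\tfrac1{12}+\tfrac1{12}+\tfrac14+\tfrac16=1$ for the class of $C_1$). This is deliberately phrased as a sanity check, since those orders come from a computational enumeration that might contain bugs. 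Your argument instead proves the ``known'' general principle that the paper alludes to in the sentence immediately preceding the corollary: forms of $C$ over $\mathbb{F}_{11}$ are classified by $H^1(\Gamma,\Aut(C))$, which for $\Gamma=\widehat{\mathbb{Z}}$ acting through Frobenius $\sigma$ is $\Aut(C)$ modulo $\sigma$-twisted conjugation; the $\mathbb{F}_{11}$-automorphism group of the twist $C^{a}$ is the stabilizer of $a$ under twisted conjugation; and then orbit--stabilizer applied to the orbit decomposition of $A$ gives $\sum_i 1/\#\Aut_{\mathbb{F}_{11}}(C_i)=1$. Your key identification of $\Aut_{\mathbb{F}_{11}}(C^{a})$ with the $\sigma$-stabilizer is exactly what the paper itself records in Section~5. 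So the two approaches are complementary: the paper's proof validates the specific computed data against this identity, while your proof establishes unconditionally why the identity must hold, making it a theorem rather than a coincidence to be checked.

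One small point worth making explicit in your write-up: in passing from the cocycle description of $H^1(\widehat{\mathbb{Z}},A)$ to the orbit decomposition of the whole of $A$, you are implicitly using that \emph{every} $a\in A$ gives a continuous cocycle (which holds because $A$ is finite: one can always find $n$ with $a\,\sigma(a)\cdots\sigma^{n-1}(a)=1$, so the cocycle factors through a finite quotient of $\widehat{\mathbb{Z}}$). This is true and standard, but it is precisely what guarantees that the orbits exhaust $A$ and hence that $\sum_i|\mathcal{O}_i|=|A|$; a one-line justification would make the argument self-contained.
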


\begin{proof}
It suffices to consider the following nine cases:
\begin{enumerate}
\item $C_1^{\rm (N1)}$, $C_6^{\rm (N1)}$, $C_7^{\rm (N1)}$, $C_1^{\rm (N2)}$, $C_3^{\rm (N2)}$ and $C_5^{\rm (N2)}$.
\item $C_2^{\rm (N1)}$ and $C_{4}^{\rm (N1)}$.
\item $C_3^{\rm (N1)}$, $C_5^{\rm (N1)}$, $C_8^{\rm (N1)}$, $C_2^{\rm (N2)}$ and $C_4^{\rm (N2)}$.
\item $C_1^{\rm (Dege)}$, $C_2^{\rm (Dege)}$, $C_9^{\rm (Dege)}$, $C_{10}^{\rm (Dege)}$, $C_{11}^{\rm (Dege)}$ and $C_{15}^{\rm (Dege)}$.
\item $C_3^{\rm (Dege)}$, $C_{12}^{\rm (Dege)}$, $C_{13}^{\rm (Dege)}$, $C_{16}^{\rm (Dege)}$ and $C_{17}^{\rm (Dege)}$.
\item $C_4^{\rm (Dege)}$ and $C_{5}^{\rm (Dege)}$.
\item $C_6^{\rm (Dege)}$.
\item $C_7^{\rm (Dege)}$ and $C_{14}^{\rm (Dege)}$.
\item $C_8^{\rm (Dege)}$.
\end{enumerate}
For the case (1), we have that the six curves $V(Q^{\rm (N1)},P_1^{\rm (N1)})$, $V(Q^{\rm (N1)},P_6^{\rm (N1)})$, $V(Q^{\rm (N1)},P_7^{\rm (N1)})$, $V(Q^{\rm (N2)},P_1^{\rm (N2)})$, $V(Q^{\rm (N2)},P_3^{\rm (N2)})$ and $V(Q^{\rm (N2)},P_5^{\rm (N2)})$ are isomorphic to each other over $\overline{\mathbb{F}_{11}}$.
By Theorems \ref{theo:N1Aut} and \ref{theo:N2Aut}, the orders of their automorphism groups over $\mathbb{F}_{11}$ are $6$, $4$, $12$, $12$, $4$ and $6$, respectively.
The sum of the reciprocals of the orders is
\[
\frac{1}{6}+\frac{1}{4}+\frac{1}{12}+\frac{1}{12}+\frac{1}{4}+\frac{1}{6}=1.
\]
Similarly to the case (1), we can prove the statements for the other cases (2) -- (9).
\end{proof}

\subsection{Automorphism groups over the algebraic closure $\overline{\F_{11}}$}\label{subsec:AutAC}
The nonhyperelliptic superspecial curves of genus $4$ over $\mathbb{F}_{11}$ have the $9$ isomorphism classes over the algebraic closure $\overline{\mathbb{F}_{11}}$.
In this subsection, we determine their automorphism groups over $\overline{\mathbb{F}_{11}}$.
Namely, we compute $\mathrm{Aut}(C):=\mathrm{Aut}_{\overline{\mathbb{F}_{11}}} (C)$ for the $\overline{\mathbb{F}_{11}}$-isomorphism class $C$ of each nonhyperelliptic superspecial curve of genus $4$ over $\mathbb{F}_{11}$.
Recall from Corollary \ref{cor:KH17overF11} that any nonhyperelliptic superspecial curve of genus $4$ over $\F_{11}$ is isomorphic over $\overline{\F_{11}}$ to one of the curves $C_i:=V(Q^{{\rm (N1)}}, P_i^{({\rm alc})})$ for $1 \leq i \leq 3$, or $C_i:=V(Q^{({\rm Dege})}, P_j^{({\rm alc})})$ for $4 \leq j \leq 9$, where $Q^{{\rm (N1)}} = 2 x w + 2 y z$, $Q^{({\rm Dege})} = 2 yw + z^2$ and

\begin{eqnarray}
P_1^{(\mathrm{alc})}&:=& x^2 y + x^2 z + x z^2 + 4 y^3 + 2 y^2 z + 10 y^2 w + 3 y z^2 + 8 y z w + 8 y w^2 \nonumber \\
& & + 8 z^3 + 7 z^2 w + 7 z w^2 + 4 w^3, \nonumber \\
P_2^{(\mathrm{alc})} &:=& x^2 y + x^2 z + y^3 + y^2 z + 7 y z^2 + 4 y w^2 + 2 z^3 + 9 z w^2, \nonumber \\
P_3^{(\mathrm{alc})} &:=& x^2 y + x^2 z + x z^2 + 9 y^3 + 6 y^2 z + 5 y^2 w + 8 y z^2 + 5 y z w + 2 y w^2 + z^3 + 2 z^2 w \nonumber \\
& & + 7 z w^2 + w^3, \nonumber \\
P_4^{(\mathrm{alc})} &:=&  x^3+y^3+8 y z^2 + z^2 w+ 2 w^3, \nonumber \\
P_5^{(\mathrm{alc})} &:=& x^3+2 y^3+4 y z^2 + z^2 w+10 w^3, \nonumber \\
P_6^{(\mathrm{alc})} &:=& x^3 + x w^2 + y^3, \nonumber \\
P_7^{(\mathrm{alc})} &:=& x^3 + x z w + y^3 + 7 z^3 + w^3, \nonumber \\
P_8^{(\mathrm{alc})} &:=& x^3 + x y z + x w^2 + y^3 + 5 z^3 + 4 w^3, \nonumber \\
P_9^{(\mathrm{alc})} &:=& x^3 + x y z + 6 x w^2 + y^3 + 8 z^3 + 8 w^3. \nonumber 
\end{eqnarray}

\begin{rem}
To compare the sizes of the orders of automorphism groups over $\overline{\mathbb{F}_{11}}$ with that over $\mathbb{F}_{11}$, we choose $C$ among elements in an $\overline{\mathbb{F}_{11}}$-isomorphism class so that $\mathrm{Aut}_{\mathbb{F}_{11}} (C)$ is maximal, i.e., $\# \mathrm{Aut}_{\mathbb{F}_{11}} (C) \geq \# \mathrm{Aut}_{\mathbb{F}_{11}} (C^{\prime})$ for any $C^{\prime}$ with $C^{\prime} \cong C$ over $\overline{\mathbb{F}_{11}}$.
For example, we choose $C_7^{{\rm (N1)}}$ among $C_1^{\rm (N1)}$, $C_6^{\rm (N1)}$, $C_7^{\rm (N1)}$, $C_1^{\rm (N2)}$, $C_3^{\rm (N2)}$ and $C_5^{\rm (N2)}$.
For this reason, some equations of $P_i^{{\rm (alc)}}$ are different from those given in Corollary \ref{cor:KH17overF11}.
\end{rem}

\begin{thm}\label{theo:AlcAut}
Put $C_i:=V(Q^{{\rm (N1)}}, P_i^{({\rm alc})})$ for $1 \leq i \leq 3$, and $C_{j}:=V(Q^{({\rm Dege})}, P_j^{({\rm alc})})$ for $4 \leq j \leq 9$, where $Q^{{\rm (N1)}} := 2 x w + 2 y z$ and $Q^{({\rm Dege})}:= 2 yw + z^2$.
Then we have the following isomorphisms:

\begin{tabular}{clcl}
$(1)$ & $\Aut ( C_1 ) \cong {\rm D}_6$,  & 
$(2)$ & $\Aut ( C_2 ) \cong {\rm C}_2 \times {\rm C}_2$, \\[2mm]
$(3)$ & $\Aut ( C_3 ) \cong {\rm S}_4 $, &
$(4)$ & $\Aut ( C_4 ) \cong {\rm D}_6 \times {\rm C}_3$, \\[2mm]
$(5)$ & $\Aut ( C_5 ) \cong {\rm S}_4 \times {\rm C}_3$, & 
$(6)$ & $\Aut ( C_6 ) \cong {\rm C}_{12}$, \\[2mm]
$(7)$ & $\Aut ( C_7 ) \cong {\rm C}_3$, &
$(8)$ & $\Aut ( C_8 ) \cong {\rm A}_4$, \\[2mm]
$(9)$ & $\Aut ( C_9 ) \cong {\rm C}_3$.
& 
& 
\end{tabular}
\end{thm}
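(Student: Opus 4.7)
The plan is to invoke Algorithm \ref{alg:AutoN1} with input flag $q^\prime = 0$ for each of the nine curves $C_1, \ldots, C_9$, working throughout over $\overline{\mathbb{F}_{11}}$ rather than $\mathbb{F}_{11}$. Since $C_1, C_2, C_3$ are cut out by the quadric $Q^{(\mathrm{N1})}$, for these we use the Bruhat decomposition $\tilde{\gO}_\varphi(\overline{\mathbb{F}_{11}}) = \bigcup_{i=1}^{8} \Omega_i$ of Subsection \ref{subsec:orthoN1} together with the parameterisation $\mathcal{G}_{Q^{(\mathrm{N1})}} = \{g_1, \ldots, g_8\}$ and the accompanying ideal generators $\mathcal{P}_{Q^{(\mathrm{N1})}}$; for $C_4, \ldots, C_9$ we use the degenerate version from Subsection \ref{subsec:orthoDege} with $\mathcal{G}_{Q^{(\mathrm{Dege})}} = \{g_1, \ldots, g_4\}$. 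Note that the (N2) case does not appear here because over $\overline{\mathbb{F}_{11}}$ every nondegenerate quadric in four variables is equivalent to $Q^{(\mathrm{N1})}$.

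For each pair $(C_i, g_j)$, the procedure reduces computing $\{g \in \Omega_j : g \cdot P_i \equiv r P_i \pmod{Q}\}$ to finding the zero locus in $\overline{\mathbb{F}_{11}}$ of a zero-dimensional ideal; Algorithm \ref{alg:VarietyAC1} handles this by successive elimination along a lex Gr\"obner basis, constructing at each step the minimal splitting field of the univariate polynomial produced by back-substitution. After collecting the solutions and identifying $g \sim c g$ for $c \in \overline{\mathbb{F}_{11}}^\times$, we obtain explicit generators of $\Aut(C_i)$ as matrices in $\GL_4(\overline{\mathbb{F}_{11}})$; this is recorded in Proposition \ref{prop:ACAut}, which we then quote directly to extract each $\Aut(C_i)$ as a concrete finite subgroup.

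Knowing $\Aut(C_i) \subset \GL_4(\overline{\mathbb{F}_{11}})$ explicitly (finiteness being guaranteed by Stichtenoth's bound cited in Subsection \ref{subsec:isom}), we next identify its isomorphism class using the heuristic of that subsection: compute the order $n = \# \Aut(C_i)$, enumerate the standard candidates $\mathrm{C}_t$, $\mathrm{D}_t$, $\mathrm{S}_t$, $\mathrm{A}_t$ and their direct products of that order, and confirm the isomorphism using Magma's \texttt{IsIsomorphic()}. A valuable consistency check is the inclusion $\Aut_{\mathbb{F}_{11}}(C_i) \hookrightarrow \Aut(C_i)$ matched against Theorems \ref{theo:N1Aut}, \ref{theo:N2Aut} and \ref{theo:DegenerateAut}: for example the jumps $\mathrm{C}_2 \hookrightarrow \mathrm{C}_2 \times \mathrm{C}_2$ for $C_2$ and $\mathrm{D}_6 \hookrightarrow \mathrm{D}_6 \times \mathrm{C}_3$ for $C_4$ are naturally explained by extra $\mathrm{C}_3$-symmetries coming from a primitive cube root of unity $\zeta_3 \in \mathbb{F}_{121} \smallsetminus \mathbb{F}_{11}$ (indeed $3 \nmid (11-1)$), and the analogous index-$3$ jumps for $C_5, C_7, C_9$ have the same origin.

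The principal obstacle is computational. Unlike the $\mathbb{F}_{11}$-case, where one may shortcut by taking $r = 1$ in $\mathcal{P}_Q$ because the cube map is bijective on $\mathbb{F}_{11}^\times$, over $\overline{\mathbb{F}_{11}}$ we must keep $r$ and its inverse $v$ as variables and solve larger zero-dimensional systems, with the working field extended adaptively at each elimination step. For the curves with the largest automorphism groups — notably $C_5$ with $\Aut(C_5) \cong \mathrm{S}_4 \times \mathrm{C}_3$ of order $72$, and $C_8$ with $\Aut(C_8) \cong \mathrm{A}_4$ — the Gr\"obner basis computations are substantially heavier, and pinning down the abstract group structure rather than merely the order is where the heuristic genuinely bites; for the direct products $\mathrm{D}_6 \times \mathrm{C}_3$ and $\mathrm{S}_4 \times \mathrm{C}_3$ one further has to verify that the extra $\mathrm{C}_3$-factor centralises the corresponding $\mathrm{D}_6$ or $\mathrm{S}_4$, which can be checked directly on the matrix generators supplied by Proposition \ref{prop:ACAut}.
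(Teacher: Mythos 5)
Your proposal matches the paper's own route: the proof of Theorem \ref{theo:AlcAut} simply cites Proposition \ref{prop:ACAut}, which is obtained by running Algorithm \ref{alg:AutoN1} with $q^{\prime}=0$ using the {\bf (N1)} decomposition for $C_1,C_2,C_3$ and the {\bf (Dege)} decomposition for $C_4,\ldots,C_9$, and then identifies the abstract group via the heuristic of Subsection \ref{subsec:isom} and \texttt{IsIsomorphic()}. Your observation that {\bf (N2)} disappears over $\overline{\mathbb{F}_{11}}$ and the cross-check against the $\mathbb{F}_{11}$-automorphism groups via the inclusion $\Aut_{\mathbb{F}_{11}}(C_i)\hookrightarrow\Aut(C_i)$ are both sound and consistent with Table \ref{tab:ACAut}.

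One factual slip is worth flagging: you claim that over $\overline{\mathbb{F}_{11}}$ one ``must keep $r$ and its inverse $v$ as variables'' since the cube map is no longer bijective. This is not so. Normalizing $r=1$ only requires the third-power map to be \emph{surjective} (replace $g$ by $r^{-1/3}g$, which is $\sim g$), and $\overline{\mathbb{F}_{11}}$ is algebraically closed, so cube roots always exist; indeed the paper's proof of Proposition \ref{prop:ACAut} takes $\mathcal{P}_Q=\{a_1 a_2 -1,\; b_1 b_2 -1,\; cs-1,\; r-1\}$ exactly as in the $\mathbb{F}_{11}$ case. (Keeping $r$ variable is harmless but unnecessary.) A second, cosmetic point: in your consistency-check aside, the jump for $C_2$ from ${\rm C}_2$ to ${\rm C}_2\times{\rm C}_2$ is an index-$2$ extension and is not explained by a cube root of unity $\zeta_3\in\mathbb{F}_{121}\smallsetminus\mathbb{F}_{11}$; the $\zeta_3$-explanation correctly applies to the index-$3$ enlargements at $C_4,C_5,C_7,C_9$ (and contributes the ${\rm C}_3$-factor visible as $\diag(1,\zeta^{80},\zeta^{80},\zeta^{80})$ in Proposition \ref{prop:ACAut}(4),(5)). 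Neither of these slips affects the validity of the argument, which otherwise coincides with the paper's.
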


\begin{proof}
Similarly to the proofs of Theorems \ref{theo:N1Aut} and \ref{theo:DegenerateAut}, the claims (1) -- (9) follow from Proposition \ref{prop:ACAut} in Subsection \ref{subsubsec:compAC}.
\end{proof}

In Table \ref{tab:ACAut}, we summarize the results in Theorem \ref{theo:AlcAut} together with the results in Theorems \ref{theo:N1Aut} -- \ref{theo:DegenerateAut}.
Here an $\mathbb{F}_{11}$-form of $C$ is a (nonhyperelliptic superspecial) curve $C^{\prime}$ over $\mathbb{F}_{11}$ such that $C \cong C^{\prime}$ over $\overline{\mathbb{F}_{11}}$, where $\mathrm{Aut}(C)$ is the automorphism group over $\overline{\mathbb{F}_{11}}$.
Each of $C_i^{\rm (N1)}$, $C_j^{\rm (N2)}$ and $C_k^{\rm (Dege)}$ is defined at the beginning of this section (Section \ref{section4}).

\renewcommand{\arraystretch}{1.3}
\begin{table}[H]
\begin{center}
\begin{tabular}{ccc|ccc} \hline
Superspecial curves $C$ & \multirow{2}{*}{$\Aut(C)$} & \multirow{2}{*}{$\# \Aut(C)$} & $\mathbb{F}_{11}$-forms $C^{\prime}$ & \multirow{2}{*}{$\Aut_{\mathbb{F}_{11}}(C^{\prime})$} & \multirow{2}{*}{$\# \Aut_{\mathbb{F}_{11}}(C^{\prime})$} \\ 
over $\overline{\mathbb{F}_{11}}$ & & &  of $C$ & & \\ \hline
\multirow{6}{*}{$C_1$}  & \multirow{6}{*}{${\rm D}_6$} & \multirow{6}{*}{$12$} & $C_7^{\rm (N1)}$ & ${\rm D}_6$ & $12$ \\
 & & & $C_1^{\rm (N1)}$ & ${\rm C}_6$ & $6$ \\
 & & & $C_6^{\rm (N1)}$ & ${\rm C}_2 \times {\rm C}_2$ & $4$ \\
 & & & $C_1^{\rm (N2)}$ & ${\rm D}_6$ & $12$ \\
 & & & $C_3^{\rm (N2)}$ & ${\rm C}_2 \times {\rm C}_2$ & $4$ \\
 & & & $C_5^{\rm (N2)}$ & ${\rm C}_6$ & $6$ \\ \hline
\multirow{2}{*}{$C_2$} & \multirow{2}{*}{${\rm C}_2 \times {\rm C}_2$} & \multirow{2}{*}{$4$} & $C_2^{\rm (N1)}$  & ${\rm C}_2$ & $2$\\
 & & & $C_4^{\rm (N1)}$ & ${\rm C}_2$ & $2$ \\ \hline
\multirow{5}{*}{$C_3$}  & \multirow{5}{*}{${\rm S}_4$} & \multirow{5}{*}{$24$} & $C_3^{\rm (N1)}$ & ${\rm D}_4$ & $8$ \\
 & & & $C_5^{\rm (N1)}$ & ${\rm C}_3$ & $3$ \\
 & & & $C_8^{\rm (N1)}$ & ${\rm S}_4$ & $24$ \\
 & & & $C_2^{\rm (N2)}$ & ${\rm C}_2 \times {\rm C}_2$ & $4$ \\
 & & & $C_4^{\rm (N2)}$ & ${\rm C}_4$ & $4$ \\ \hline
\multirow{6}{*}{$C_4$} & \multirow{6}{*}{${\rm D}_6 \times {\rm C}_3$} & \multirow{6}{*}{$36$} & $C_1^{\rm (Dege)}$  & ${\rm C}_2 \times {\rm C}_2$ & $4$\\
 & & & $C_2^{\rm (Dege)}$ & ${\rm C}_2 \times {\rm C}_2$ & $4$ \\
 & & & $C_9^{\rm (Dege)}$ & ${\rm C}_6$ & $6$ \\
 & & & $C_{10}^{\rm (Dege)}$ & ${\rm D}_6$ & $12$ \\
 & & & $C_{11}^{\rm (Dege)}$ & ${\rm D}_6$ & $12$ \\
 & & & $C_{15}^{\rm (Dege)}$ & ${\rm C}_6$ & $6$ \\ \hline
\multirow{5}{*}{$C_5$} & \multirow{5}{*}{${\rm S}_4 \times {\rm C}_3$} & \multirow{5}{*}{$72$}  & $C_{12}^{\rm (Dege)}$ & ${\rm S}_4$ & $24$ \\
 & & & $C_{3}^{\rm (Dege)}$ & ${\rm C}_2 \times {\rm C}_2$ & $4$ \\
 & & & $C_{13}^{\rm (Dege)}$ & ${\rm C}_4$ & $4$ \\
 & & & $C_{16}^{\rm (Dege)}$ & ${\rm C}_3$ & $3$ \\
 & & & $C_{17}^{\rm (Dege)}$ & ${\rm D}_4$ & $8$ \\ \hline
 \multirow{2}{*}{$C_6$}  & \multirow{2}{*}{${\rm C}_{12}$} & \multirow{2}{*}{$12$} & $C_4^{\rm (Dege)}$ & ${\rm C}_2$ & $2$  \\
 & & & $C_{5}^{\rm (Dege)}$ & ${\rm C}_2$ & $2$ \\ \hline
 $C_7$ & ${\rm C}_{3}$ & $3$ & $C_6^{\rm (Dege)}$  & $\{ e \}$ & $1$ \\ \hline
 \multirow{2}{*}{$C_8$}  & \multirow{2}{*}{${\rm A}_4$} & \multirow{2}{*}{$12$} & $C_7^{\rm (Dege)}$ & ${\rm C}_2$  & $2$ \\
 & & & $C_{14}^{\rm (Dege)}$ & ${\rm C}_2$ & $2$ \\ \hline
 $C_9$ & ${\rm C}_{3}$ & $3$  & $C_8^{\rm (Dege)}$ & $\{ e \}$ & $1$ \\ \hline
\end{tabular}
\vspace{-3mm}
\caption{The automorphism groups $\mathrm{Aut} (C_i):=\mathrm{Aut}_{\overline{\mathbb{F}_{11}}} (C_i)$ of $C_i:=V(Q^{{\rm (N1)}}, P_i^{({\rm alc})})$ for $1 \leq i \leq 3$, and $C_j:=V(Q^{({\rm Dege})}, P_j^{({\rm alc})})$ for $4 \leq j \leq 9$.
Here $Q^{{\rm (N1)}}$, $Q^{\rm (Dege)}$ and $P_i^{\rm (alc)}$ are defined at the beginning of this subsection (Subsection \ref{subsec:AutAC}).}\label{tab:ACAut}
\end{center}
\end{table}

\subsection{Computational parts in the proofs of main theorems}\label{subsec:comp}

In this subsection, we give computational results, which help proving our main theorems (Theorems \ref{theo:N1Aut} -- \ref{theo:DegenerateAut} and \ref{theo:AlcAut}).
Our computational results are obtained by executing algorithms given in Subsections \ref{subsec:solving} and \ref{subsec:compaut}.
We implemented the algorithms over Magma \cite{MagmaHP} in its 64-bit version.
The source codes and log files are available at the web page of the first author \cite{KudoHP}.

\subsubsection{Case of (N1) over the prime field $\mathbb{F}_{11}$}\label{subsubsec:compN1}

\begin{prop}\label{prop:N1Aut}
Let $C_i^{\rm (N1)} = V ( Q^{\rm (N1)}, P_i^{\rm (N1)})$ denote the superspecial curve of genus $4$ over $\mathbb{F}_{11}$ defined by $Q^{\rm (N1)}$ and $P_i^{\rm (N1)}$ for each $1 \leq i \leq 8$.
Then we have the following:
\begin{enumerate}
\item[$(1)$] The group $\Aut_{\mathbb{F}_{11}} ( C_1^{\rm (N1)} )$ has order $6$ and it is generated by
\[
\begin{pmatrix}
2&4&5&1\\
3&6&5&1\\
7&7&1&10\\
6&6&10&1
\end{pmatrix}.
\]
\item[$(2)$] The group $\Aut_{\mathbb{F}_{11}} ( C_2^{\rm (N1)} )$ has order $2$ and it is generated by $\diag (-1,1,1,-1)$.
\item[$(3)$] The group $\Aut_{\mathbb{F}_{11}} ( C_3^{\rm (N1)} )$ has order $8$ and it is generated by
\[
\begin{pmatrix}
5&6&6&6\\
5&6&5&5\\
5&5&6&5\\
6&6&6&5
\end{pmatrix},
\quad \text{and} \quad
{\diag}(-1,1,1,-1),
\]
whose orders are $4$ and $2$ respectively.
\item[$(4)$] The group $\Aut_{\mathbb{F}_{11}} ( C_4^{\rm (N1)} )$ has order $2$ and it is generated by
\[
\begin{pmatrix}
10&6&6&3\\
10&1&6&6\\
2&10&1&6\\
9&2&10&10
\end{pmatrix}.
\]
\item[$(5)$] The group $\Aut_{\mathbb{F}_{11}} ( C_5^{\rm (N1)} )$ has order $3$ and it is generated by
\[
\begin{pmatrix}
5&7&3&9\\
8&7&7&9\\
10&3&0&0\\
6&8&0&0
\end{pmatrix}.
\]
\item[$(6)$] The group $\Aut_{\mathbb{F}_{11}} ( C_6^{\rm (N1)} )$ has order $4$ and it is generated by
\[
\begin{pmatrix}
2&3&10&7\\
1&9&5&10\\
5&2&9&3\\
3&5&1&2
\end{pmatrix},
\quad \text{and} \quad
\begin{pmatrix}
1&5&6&3\\
0&0&1&6\\
0&1&0&5\\
0&0&0&1
\end{pmatrix},
\]
whose orders are $2$ and $2$ respectively.
\item[$(7)$] The group $\Aut_{\mathbb{F}_{11}} ( C_7^{\rm (N1)} )$ has order $12$ and it is generated by
\[
\begin{pmatrix}
6&3&5&3\\
1&6&1&5\\
5&3&6&3\\
1&5&1&6
\end{pmatrix},
\quad \text{and} \quad
\begin{pmatrix}
0&6&0&10\\
0&8&0&7\\
1&0&2&0\\
6&0&3&0
\end{pmatrix},
\]
whose orders are $2$ and $6$ respectively.
\item[$(8)$] The group $\Aut_{\mathbb{F}_{11}} ( C_8^{\rm (N1)} )$ has order $24$ and it is generated by
\[
\begin{pmatrix}
0&0&5&2\\
7&6&9&8\\
0&0&8&9\\
2&6&1&8
\end{pmatrix},
\quad \text{and} \quad
\begin{pmatrix}
9&5&2&5\\
1&3&3&2\\
8&5&3&5\\
4&8&1&9
\end{pmatrix},
\]
whose orders are $4$ and $2$ respectively.
\end{enumerate}
\end{prop}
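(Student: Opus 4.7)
The plan is to apply the Main algorithm (Algorithm~\ref{alg:AutoN1}) developed in Section~\ref{subsec:compaut} specialized to the (N1) case. For each $1\le i\le 8$, I would carry out the following computation over $K=\mathbb{F}_{11}$ with $\varphi$ the coefficient matrix of $Q^{\rm (N1)}=2xw+2yz$.

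First, I would invoke the Bruhat-style parametrization \eqref{eq:BruhatN1}: every element of $\tilde{\gO}_\varphi(\mathbb{F}_{11})$ belongs to exactly one of the eight strata $\Omega_1,\ldots,\Omega_8$, and the generic element $g_j\in\Omega_j$ is an explicit matrix depending on the free parameters $a_1,a_2,b_1,b_2,c,d_1,d_2,e_1,e_2\in\mathbb{F}_{11}$ subject to $a_1a_2=1$, $b_1b_2=1$, $c\ne 0$ (and an extra scalar $r\ne 0$ recording similitude). For each $j\in\{1,\ldots,8\}$ I would form $g_j\cdot P_i^{\rm (N1)} - rP_i^{\rm (N1)}$, reduce it modulo $Q^{\rm (N1)}$ (for instance by replacing $xw$ by $-yz$), and extract the coefficient of every monomial in $x,y,z,w$. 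Together with the auxiliary polynomials $\mathcal{P}_Q=\{a_1a_2-1,\,b_1b_2-1,\,cs-1,\,rv-1\}$ that encode nondegeneracy, this defines an ideal $I_{i,j}\subset\mathbb{F}_{11}[a_1,a_2,b_1,b_2,c,d_1,d_2,e_1,e_2,r,s,v]$ whose $\overline{\mathbb{F}_{11}}$-zero locus parametrizes the stratum $G_i\cap\Omega_j$ (keeping only the $\mathbb{F}_{11}$-rational points for $\Aut_{\mathbb{F}_{11}}(C_i^{\rm (N1)})$).

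Next, I would feed each $I_{i,j}$ to Algorithm~\ref{alg:VarietyAC1}: compute a lexicographic Gr\"obner basis, verify zero-dimensionality via Lemma~\ref{lem:elim3}, and enumerate $V_{\mathbb{F}_{11}}(I_{i,j})$ by back-substitution. Taking the disjoint union over $j=1,\ldots,8$ and then quotienting by the scalar equivalence $g\sim cg$ (which reduces $|G_i|$ by a factor of $10$) yields the finite set $\Aut_{\mathbb{F}_{11}}(C_i^{\rm (N1)})\subset\mathrm{PGL}_4(\mathbb{F}_{11})$. Counting the outputs should reproduce the orders $6,2,8,2,3,4,12,24$ claimed in statements (1)--(8).

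Finally, for each of the explicit matrices listed in (1)--(8) I would verify two things by direct matrix computation modulo $11$: (a) the matrix lies in the computed $G_i$ (i.e.\ it preserves $Q^{\rm (N1)}$ up to scalar and preserves $P_i^{\rm (N1)}$ modulo $Q^{\rm (N1)}$ up to scalar), and (b) its order in $\mathrm{PGL}_4(\mathbb{F}_{11})$ is as stated. Since the claimed number of generators (one or two) together with their orders already generates a subgroup whose order equals $|\Aut_{\mathbb{F}_{11}}(C_i^{\rm (N1)})|$, this confirms the generation claim. The main obstacle I expect is the Gr\"obner basis computation for the strata $\Omega_2,\Omega_3,\Omega_4$ (and their $M_\gA$-translates), where up to nine unknowns appear and the $P_i^{\rm (N1)}$ with many nonzero coefficients produce large polynomial systems; however, reducing to a single prime characteristic $p=11$ and exploiting the fact that the $3$rd-power map is surjective on $\mathbb{F}_{11}^\times$ (so that $\mathcal{P}_Q$ can be simplified by setting $r=1$) keeps the computation tractable in Magma, as documented in \cite{KudoHP}.
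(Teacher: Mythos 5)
Your proposal is correct and takes essentially the same route as the paper: you parametrize $\tilde{\gO}_\varphi(\mathbb{F}_{11})$ by the eight Bruhat strata, build the polynomial ideal encoding $g\cdot P_i\equiv rP_i\pmod{Q}$ together with the nondegeneracy constraints $\mathcal{P}_Q$, solve the resulting zero-dimensional systems with Algorithm~\ref{alg:VarietyAC1}, and then pass to $G_i/{\sim}$ to read off the automorphism group and check the listed generators. The only minor deviation is that the paper's actual computation fixes $r=1$ from the start (using surjectivity of cubing on $\mathbb{F}_{11}^\times$) so that the solver returns one representative per $\sim$-class directly, and it runs the Gr\"obner computation under a grevlex variable order rather than pure lex, but neither of these changes the substance of the argument.
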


\begin{proof}
We prove only the statement (1) since the other cases (2) -- (8) are proved in ways similar to (1).
Let $\varphi$ denote the coefficient matrix associated to $Q:=Q^{\rm (N1)}$.
In the following, we compute $G_1:= \{\, g \in \tilde{\gO}_{\varphi}(\mathbb{F}_{11}) \mid g \cdot P_1^{\rm (N1)} \equiv r P_1^{\rm (N1)} \bmod{Q} \mbox{ for some } r \in \mathbb{F}_{11}^{\times} \,\}$, and then find a generator of $G_1 / {\sim} \cong \Aut_{\mathbb{F}_{11}} ( C_1^{\rm (N1)} )$, where $g \sim cg$ for $g \in \tilde{\gO}_{\varphi}(\mathbb{F}_{11})$ and $c \in \mathbb{F}_{11}^{\times}$.
Recall from Subsection \ref{subsec:orthoN1} that we have the Bruhat decomposition of the orthogonal similitude group associated to $Q$, say
\begin{eqnarray}
 \tilde\gO_{\varphi}( \mathbb{F}_{11})= \gA \tilde \gT \gU \gW \gU ,\label{eq:BruhatN1_pf}
\end{eqnarray}
where $\gA = \{ 1_4, M_{\rm A} \}$, $\tilde\gT$, $\gU$ and $\gW = \{ 1_4, s_1, s_2, s_1 s_2 \}$ are the same as in Subsection \ref{subsec:orthoN1}.
Putting
\begin{equation*}
\begin{array}{llll}
\mathrm{\Omega}_1 := \tilde{\gT} \gU \gU, & \mathrm{\Omega}_2 := \tilde{\gT} \gU s_1 \gU, & \mathrm{\Omega}_3 := \tilde{\gT} \gU s_2 \gU, & \mathrm{\Omega}_4 := \tilde{\gT} \gU s_1 s_2 \gU, \\
\mathrm{\Omega}_5 := M_{\gA} \tilde{\gT} \gU \gU, & \mathrm{\Omega}_6 := M_{\gA} \tilde{\gT} \gU s_1 \gU, & \mathrm{\Omega}_7 := M_{\gA} \tilde{\gT} \gU s_2 \gU, & \mathrm{\Omega}_8 := M_{\gA} \tilde{\gT} \gU s_1 s_2 \gU ,
\end{array}
\end{equation*}
we have $ \tilde\gO_{\varphi}( \mathbb{F}_{11}) = \bigcup_{i=1}^8 \Omega_i$.
Using the same notation as in Subsection \ref{subsec:compaut}, we set \begin{eqnarray}
g_1 &:=& \tilde{T} (a_1,a_2, b_1,b_2,c) U (d_1, d_2 ), \nonumber \\
g_2 &:=& \tilde{T} (a_1,a_2, b_1,b_2,c) U (d_1, d_2) s_1 U_1 (e_1), \nonumber \\
g_3 &:=& \tilde{T} (a_1,a_2, b_1,b_2,c) U (d_1, d_2) s_2 U_2 (e_2), \nonumber \\
g_4 &:=& \tilde{T} (a_1,a_2, b_1,b_2,c) U (d_1, d_2) s_1 s_2 U (e_1, e_2 ), \nonumber \\
g_5 &:=& M_{\gA} \tilde{T} (a_1,a_2, b_1,b_2,c) U (d_1, d_2 ), \nonumber \\
g_6 &:=& M_{\gA} \tilde{T} (a_1,a_2, b_1,b_2,c) U (d_1, d_2) s_1 U_1 (e_1), \nonumber \\
g_7 &:=& M_{\gA} \tilde{T} (a_1,a_2, b_1,b_2,c) U (d_1, d_2) s_2 U_2 (e_2), \nonumber \\
g_8 &:=& M_{\gA} \tilde{T} (a_1,a_2, b_1,b_2,c) U (d_1, d_2) s_1 s_2 U (e_1, e_2 ),\nonumber 
\end{eqnarray}
where $a_1$, $a_2$, $b_1$, $b_2$, $c$, $d_1$, $d_2$, $e_1$ and $e_2$ are variables.
For the inputs $P_1^{\rm (N1)}$, $q=11$ and $q^{\prime}=q$, we execute {\rm Main algorithm} (its pseudocode is given in Algorithm \ref{alg:AutoN1}) in Subsection \ref{subsec:compaut}.
In Main algorithm, we set $\mathcal{G}_Q = \{ g_i \mid 1 \leq i \leq 8 \}$.
For solving multivariate systems in the algorithm, we use Algorithm \ref{alg:VarietyAC1} as a subroutine, and adopt the grevlex order with
\[
a_1 \succ a_2 \succ b_1 \succ b_2 \succ c \succ s \succ d_1 \succ d_2 \succ e_1 \succ e_2 \succ r.
\]
Moreover, we set $\mathcal{P}_Q:=\{ a_1 a_2 - 1, b_1 b_2 - 1, c s -1, r-1 \}$ in Main algorithm, where $s$ and $r$ are extra variables.
Namely, we add $a_1 a_2=1$, $b_1 b_2=1$, $c s = 1$ and $r=1$ to each multivariate system.
Note that we may assume $r \equiv 1$ since the $3$-rd power map over $\mathbb{F}_{11}$ is surjective.
From the output of our computation, we obtain all elements $g$ in $G_1 = \{\, g \in \tilde{\gO}_{\varphi}(\mathbb{F}_{11}) \mid g \cdot P_1^{\rm (N1)} \equiv r P_1^{\rm (N1)} \bmod{Q} \mbox{ for some } r \in \mathbb{F}_{11}^{\times} \,\}$.
The set of the computed elements $g$ includes the matrix
\[
\begin{pmatrix}
2&4&5&1\\
3&6&5&1\\
7&7&1&10\\
6&6&10&1
\end{pmatrix},
\]
whose order is $6$.
It is computationally checked that the above matrix generates $G_1 / {\sim}$.
\end{proof}

\subsubsection{Case of (N2) over the prime field $\mathbb{F}_{11}$}\label{subsubsec:compN2}

\begin{prop}\label{prop:N2Aut}
Let $C_i^{\rm (N2)} = V ( Q^{\rm (N2)}, P_i^{\rm (N2)})$ denote the superspecial curve of genus $4$ over $\mathbb{F}_{11}$ defined by $Q^{\rm (N2)}$ and $P_i^{\rm (N2)}$ for each $1 \leq i \leq 5$.
Then we have the following:
\begin{enumerate}
\item[$(1)$] The group $\Aut_{\mathbb{F}_{11}} ( C_1^{\rm (N2)} )$ has order $12$ and it is generated by
\[
\begin{pmatrix}
1&6&9&5\\
10&8&6&6\\
1&8&1&1\\
6&10&9&1
\end{pmatrix},
\quad \text{and} \quad
\begin{pmatrix}
1&0&8&5\\
10&1&3&6\\
1&0&7&1\\
6&1&2&1
\end{pmatrix},
\]
whose orders are $2$ and $6$ respectively.
\item[$(2)$] The group $\Aut_{\mathbb{F}_{11}} ( C_2^{\rm (N2)} )$ has order $4$ and it is generated by
\[
{\diag}(-1,1,1,-1),
\quad \text{and} \quad
\begin{pmatrix}
0&0&0&9\\
0&1&0&0\\
0&0&1&0\\
5&0&0&0
\end{pmatrix},
\]
whose orders are $2$ and $2$ respectively.
\item[$(3)$] The group $\Aut_{\mathbb{F}_{11}} ( C_3^{\rm (N2)} )$ has order $4$ and it is generated by
\[
{\diag}(-1,1,1,-1),
\quad \text{and} \quad
\begin{pmatrix}
0&0&0&9\\
0&1&0&0\\
0&0&1&0\\
5&0&0&0
\end{pmatrix},
\]
whose orders are $2$ and $2$ respectively.
\item[$(4)$] The group $\Aut_{\mathbb{F}_{11}} ( C_4^{\rm (N2)} )$ has order $4$ and it is generated by
\[
\begin{pmatrix}
10&8&5&1\\
7&4&7&5\\
5&5&3&10\\
5&7&0&5
\end{pmatrix}.
\]
\item[$(5)$] The group $\Aut_{\mathbb{F}_{11}} ( C_5^{\rm (N2)} )$ has order $6$ and it is generated by
\[
\begin{pmatrix}
5&2&2&2\\
10&7&8&8\\
10&2&1&2\\
10&1&6&8
\end{pmatrix}.
\]
\end{enumerate}
\end{prop}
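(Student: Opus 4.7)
The plan is to mirror the template used in the proof of Proposition~\ref{prop:N1Aut}, now running through the five (N2) curves rather than the eight (N1) curves. For each $i \in \{1,\ldots,5\}$, let $\varphi$ denote the coefficient matrix of $Q := Q^{\rm (N2)} = 2xw + y^2 - \epsilon z^2$ and set
\[
G_i := \{\, g \in \tilde{\gO}_{\varphi}(\mathbb{F}_{11}) \mid g \cdot P_i^{\rm (N2)} \equiv r P_i^{\rm (N2)} \bmod Q \text{ for some } r \in \mathbb{F}_{11}^{\times} \,\},
\]
so that $\Aut_{\mathbb{F}_{11}}(C_i^{\rm (N2)}) \cong G_i/{\sim}$ with $g \sim cg$ for $c \in \mathbb{F}_{11}^{\times}$. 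The task is to enumerate $G_i$ explicitly and then verify that the listed matrices generate $G_i/{\sim}$ with the stated orders.

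First I would invoke the Bruhat decomposition from Subsection~\ref{subsec:orthoN2}, $\tilde\gO_{\varphi}(\mathbb{F}_{11}) = \gA\gH\tilde{\gC}\gU\gW\gU$, which with $\gA = \{1_4, M_{\gA}\}$ and $\gW = \{1_4, M_{\gW}\}$ decomposes $\tilde\gO_{\varphi}(\mathbb{F}_{11})$ into the four cells $\Omega_1, \Omega_2, \Omega_3, \Omega_4$ described in Subsection~\ref{subsec:compaut}. Next, following that subsection, I would parameterize a generic element $g_j \in \Omega_j$ using the eight indeterminates $a_1, a_2, b_1, b_2, c_1, c_2, d_1, d_2$ subject to $a_1 a_2 = 1$ and $b_1^2 - \epsilon b_2^2 \neq 0$, as recorded there.

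Then, for each pair $(i,j)$, I would feed the input $(Q^{\rm (N2)}, P_i^{\rm (N2)}, 11, 11)$ into Main algorithm (Algorithm~\ref{alg:AutoN1}) with $\mathcal{G}_Q = \{g_1, g_2, g_3, g_4\}$ and
\[
\mathcal{P}_Q = \{\, a_1 a_2 - 1,\ (b_1^2 - \epsilon b_2^2)\,t - 1,\ r - 1 \,\},
\]
where the simplification $r = 1$ is justified because the third-power map on $\mathbb{F}_{11}^{\times}$ is surjective. Algorithm~\ref{alg:VarietyAC1}, applied with the grevlex order on the variables to the resulting zero-dimensional ideal over $\mathbb{F}_{11}$, produces all $\mathbb{F}_{11}$-solutions; substituting these back into $g_j$ yields every element of $G_i$. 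Quotienting by the scalar equivalence $\sim$ is then straightforward.

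Finally, for each $i$ I would check by direct matrix computation in $\GL_4(\mathbb{F}_{11})$ that the matrices displayed in the statement lie in $G_i$, have the asserted orders, and that the subgroup they generate inside $G_i/{\sim}$ has order matching the full computed list. As in Proposition~\ref{prop:N1Aut}, I would exhibit the calculation in full only for case $(1)$ and indicate that $(2)$--$(5)$ are entirely analogous. The main obstacle I anticipate is bookkeeping rather than conceptual: the four cells produce systems in eight variables plus auxiliaries, and case $(1)$ in particular has $|\Aut| = 12$, so one must be careful both that Algorithm~\ref{alg:VarietyAC1} genuinely returns every solution (so no automorphism is missed) and that the two displayed generators $a$ and $b$ of orders $2$ and $6$ do indeed exhaust the quotient $G_1/{\sim}$ rather than a proper subgroup.
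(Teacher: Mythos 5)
Your proposal matches the paper's own proof essentially step for step: the same Bruhat decomposition of $\tilde{\gO}_{\varphi}(\mathbb{F}_{11})$ into the four cells $\Omega_1,\ldots,\Omega_4$, the same parameterization in $a_1,a_2,b_1,b_2,c_1,c_2,d_1,d_2$, the same $\mathcal{P}_Q$ with the $r=1$ simplification, the same invocation of Algorithm~\ref{alg:VarietyAC1} under a grevlex order, and the same decision to exhibit only case $(1)$ with the remaining cases handled analogously. This is correct and there is nothing to add.
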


\begin{proof}
We prove only the statement (1) since the other cases (2) -- (5) are proved in ways similar to (1).
Let $\varphi$ denote the coefficient matrix associated to $Q:=Q^{\rm (N2)}$.
In the following, we compute $G_1:= \{\, g \in \tilde{\gO}_{\varphi}(\mathbb{F}_{11}) \mid g \cdot P_1^{\rm (N2)} \equiv r P_1^{\rm (N2)} \bmod{Q} \mbox{ for some } r \in \mathbb{F}_{11}^{\times} \,\}$, and then find generators of $G_1 / {\sim} \cong \Aut_{\mathbb{F}_{11}} ( C_1^{\rm (N2)} )$, where $g \sim cg$ for $g \in \tilde{\gO}_{\varphi}(\mathbb{F}_{11})$ and $c \in \mathbb{F}_{11}^{\times}$.
Recall from Subsection \ref{subsec:orthoN2} that we have the Bruhat decomposition of the orthogonal similitude group associated to $Q$, say
\begin{eqnarray}
 \tilde\gO_{\varphi}( \mathbb{F}_{11})= \gA \tilde{\gT} \gU \gW \gU ,\label{eq:BruhatN2_pf}
\end{eqnarray}
where $\gA = \{ 1_4, M_{\rm A} \}$, $\tilde{\gT}$, $\gU$ and $\gW = \{ 1_4, M_{\rm W} \}$ are the same as in Subsection \ref{subsec:orthoN2}.
Putting
\begin{equation*}
\begin{array}{llll}
\mathrm{\Omega}_1 := \tilde{\rm{T}} {\rm U} {\rm U}, & {\rm \Omega}_2 := \tilde{{\rm T}} {\rm U} M_{\rm W} {\rm U}, & {\rm \Omega}_3 := M_{\rm A} \tilde{{\rm T}} {\rm U} {\rm U}, & {\rm \Omega}_4 := M_{\rm A} \tilde{{\rm T}} {\rm U} M_{\rm W} {\rm U},
\end{array}
\end{equation*}
we have $ \tilde\gO_{\varphi}( \mathbb{F}_{11}) = \bigcup_{i=1}^4 \Omega_i$.
Using the same notation as in Subsection \ref{subsec:compaut}, we set \begin{eqnarray}
g_1 &:=& \tilde{T}(a_1,a_2,b_1,b_2) U (c_1, c_2 ), \nonumber \\
g_2 &:=& \tilde{T}(a_1,a_2,b_1,b_2) U (c_1, c_2 ) M_{\gW} U (d_1, d_2 ), \nonumber \\
g_3 &:=& M_{\gA} \tilde{T}(a_1,a_2,b_1,b_2) U (c_1, c_2 ), \nonumber \\
g_4 &:=& M_{\gA} \tilde{T}(a_1,a_2,b_1,b_2) U (c_1, c_2 ) M_{\gW} U (d_1, d_2 ), \nonumber 
\end{eqnarray}
where $a_1$, $a_2$, $b_1$, $b_2$, $c_1$, $c_2$, $d_1$ and $d_2$ are variables.
For the inputs $P_1^{\rm (N2)}$, $q=11$ and $q^{\prime}=q$, we execute {\rm Main algorithm} (its pseudocode is given in Algorithm \ref{alg:AutoN1}) in Subsection \ref{subsec:compaut}.
In Main algorithm, we set $\mathcal{G}_Q = \{ g_i \mid 1 \leq i \leq 4 \}$.
For solving multivariate systems in the algorithm, we use Algorithm \ref{alg:VarietyAC1} as a subroutine, and adopt the grevlex order with
\[
 a_1 \succ a_2 \succ b_1 \succ b_2 \succ c_1 \succ c_2 \succ d_1 \succ d_2 \succ t \succ r. 
\]
Moreover, we set $\mathcal{P}_Q:=\{ a_1 a_2 - 1, (b_1^2-\epsilon b_2^2) t -1, r-1 \}$ in Main algorithm, where $t$ and $r$ are extra variables.
Namely, we add $a_1 a_2=1$, $(b_1^2-\epsilon b_2^2) t =1$ and $r=1$ to each multivariate system.
Note that we may assume $r \equiv 1$ since the $3$-rd power map over $\mathbb{F}_{11}$ is surjective.
From the output of our computation, we obtain all elements $g$ in $G_1 = \{\, g \in \tilde{\gO}_{\varphi}(\mathbb{F}_{11}) \mid g \cdot P_1^{\rm (N2)} \equiv r P_1^{\rm (N2)} \bmod{Q} \mbox{ for some } r \in \mathbb{F}_{11}^{\times} \,\}$.
The set of the computed elements $g$ includes the matrices
\[
\begin{pmatrix}
1&6&9&5\\
10&8&6&6\\
1&8&1&1\\
6&10&9&1
\end{pmatrix},
\quad \text{and} \quad
\begin{pmatrix}
1&0&8&5\\
10&1&3&6\\
1&0&7&1\\
6&1&2&1
\end{pmatrix},
\]
whose orders are $2$ and $6$ respectively.
It is computationally checked that these two matrices generate $G_1 / {\sim}$.
\end{proof}

\subsubsection{Case of (Dege) over the prime field $\mathbb{F}_{11}$}\label{subsubsec:compDege}

\begin{prop}\label{prop:DegeAut}
Let $C_i^{\rm (Dege)} = V ( Q^{\rm (Dege)}, P_i^{\rm (Dege)})$ denote the superspecial curve of genus $4$ over $\mathbb{F}_{11}$ defined by $Q^{\rm (Dege)}$ and $P_i^{\rm (Dege)}$ for each $1 \leq i \leq 17$.
Then we have the following:
\begin{enumerate}
\item[$(1)$] The group $\Aut_{\mathbb{F}_{11}} ( C_1^{\rm (Dege)} )$ has order $4$ and it is generated by
\[
{\diag}(1,1,-1,1),
\quad \text{and} \quad
\begin{pmatrix}
1&0&0&0\\
0&0&0&1\\
0&0&1&0\\
0&1&0&0
\end{pmatrix},
\]
whose orders are $2$ and $2$ respectively.
\item[$(2)$] The group $\Aut_{\mathbb{F}_{11}} ( C_2^{\rm (Dege)} )$ has order $4$ and it is generated by
\[
{\diag}(1,1,-1,1),
\quad \text{and} \quad
\begin{pmatrix}
1&0&0&0\\
0&0&0&7\\
0&0&1&0\\
0&8&0&0
\end{pmatrix},
\]
whose orders are $2$ and $2$ respectively.
\item[$(3)$] The group $\Aut_{\mathbb{F}_{11}} ( C_3^{\rm (Dege)} )$ has order $4$ and it is generated by
\[
\begin{pmatrix}
1&0&0&0\\
0&0&0&3\\
0&0&1&0\\
0&4&0&0
\end{pmatrix},
\quad \text{and} \quad
\begin{pmatrix}
1&0&0&0\\
0&7&4&2\\
0&5&7&4\\
0&10&5&7
\end{pmatrix},
\]
whose orders are $2$ and $2$ respectively.
\item[$(4)$] The group $\Aut_{\mathbb{F}_{11}} ( C_4^{\rm (Dege)} )$ has order $2$ and it is generated by ${\diag}(1,1,-1,1)$.
\item[$(5)$] The group $\Aut_{\mathbb{F}_{11}} ( C_5^{\rm (Dege)} )$ has order $2$ and it is generated by ${\diag}(1,1,-1,1)$.
\item[$(6)$] The group $\Aut_{\mathbb{F}_{11}} ( C_6^{\rm (Dege)} )$ has order $1$.
\item[$(7)$] The group $\Aut_{\mathbb{F}_{11}} ( C_7^{\rm (Dege)} )$ has order $2$ and it is generated by
\[
\begin{pmatrix}
1&0&0&0\\
0&7&1&7\\
0&9&7&1\\
0&6&9&7
\end{pmatrix}.
\]
\item[$(8)$] The group $\Aut_{\mathbb{F}_{11}} ( C_8^{\rm (Dege)} )$ has order $1$.
\item[$(9)$] The group $\Aut_{\mathbb{F}_{11}} ( C_9^{\rm (Dege)} )$ has order $6$ and it is generated by
\[
\begin{pmatrix}
1&0&0&0\\
0&6&5&8\\
0&3&7&3\\
0&2&4&7
\end{pmatrix}.
\]
\item[$(10)$] The group $\Aut_{\mathbb{F}_{11}} ( C_{10}^{\rm (Dege)} )$ has order $12$ and it is generated by
\[
\begin{pmatrix}
1&0&0&0\\
0&2&9&10\\
0&6&6&9\\
0&2&6&2
\end{pmatrix},
\quad \text{and} \quad
\begin{pmatrix}
1&0&0&0\\
0&2&2&10\\
0&6&5&9\\
0&2&5&2
\end{pmatrix},
\]
whose orders are $2$ and $6$ respectively.
\item[$(11)$] The group $\Aut_{\mathbb{F}_{11}} ( C_{11}^{\rm (Dege)} )$ has order $12$ and it is generated by
\[
\begin{pmatrix}
1&0&0&0\\
0&7&6&10\\
0&8&9&6\\
0&8&8&7
\end{pmatrix},
\quad \text{and} \quad
\begin{pmatrix}
1&0&0&0\\
0&2&1&8\\
0&2&0&3\\
0&10&6&7
\end{pmatrix},
\]
whose orders are $2$ and $6$ respectively.
\item[$(12)$] The group $\Aut_{\mathbb{F}_{11}} ( C_{12}^{\rm (Dege)} )$ has order $24$ and it is generated by
\[
\begin{pmatrix}
1&0&0&0\\
0&1&9&9\\
0&8&5&4\\
0&1&6&4
\end{pmatrix},
\quad \text{and} \quad
\begin{pmatrix}
1&0&0&0\\
0&0&0&3\\
0&0&1&0\\
0&4&0&0
\end{pmatrix},
\]
whose orders are $4$ and $2$ respectively.
\item[$(13)$] The group $\Aut_{\mathbb{F}_{11}} ( C_{13}^{\rm (Dege)} )$ has order $4$ and it is generated by
\[
\begin{pmatrix}
1&0&0&0\\
0&9&7&4\\
0&1&1&0\\
0&3&0&0
\end{pmatrix}.
\]
\item[$(14)$] The group $\Aut_{\mathbb{F}_{11}} ( C_{14}^{\rm (Dege)} )$ has order $2$ and it is generated by
\[
\begin{pmatrix}
1&0&0&0\\
0&1&4&3\\
0&10&8&4\\
0&5&10&1
\end{pmatrix}.
\]
\item[$(15)$] The group $\Aut_{\mathbb{F}_{11}} ( C_{15}^{\rm (Dege)} )$ has order $6$ and it is generated by
\[
\begin{pmatrix}
1&0&0&0\\
0&8&7&10\\
0&5&2&3\\
0&6&10&10
\end{pmatrix}.
\]
\item[$(16)$] The group $\Aut_{\mathbb{F}_{11}} ( C_{16}^{\rm (Dege)} )$ has order $3$ and it is generated by
\[
\begin{pmatrix}
1&0&0&0\\
0&1&8&1\\
0&8&10&0\\
0&1&0&0
\end{pmatrix}.
\]
\item[$(17)$] The group $\Aut_{\mathbb{F}_{11}} ( C_{17}^{\rm (Dege)} )$ has order $8$ and it is generated by
\[
\begin{pmatrix}
1&0&0&0\\
0&3&6&5\\
0&2&3&9\\
0&3&3&4
\end{pmatrix},
\quad \text{and} \quad
\begin{pmatrix}
1&0&0&0\\
0&7&2&6\\
0&2&9&2\\
0&6&2&7
\end{pmatrix},
\]
whose orders are $4$ and $2$ respectively.
\end{enumerate}
\end{prop}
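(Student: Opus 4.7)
The plan is to follow the template already used for Propositions \ref{prop:N1Aut} and \ref{prop:N2Aut}: for each $1\le i\le 17$, compute
\[
G_i := \{\, g \in \tilde{\gO}_{\varphi}(\F_{11}) \mid g \cdot P_i^{\rm (Dege)} \equiv r\, P_i^{\rm (Dege)} \bmod Q^{\rm (Dege)} \text{ for some } r \in \F_{11}^{\times}\,\},
\]
where $\varphi$ is the coefficient matrix of $Q^{\rm (Dege)} = 2yw + z^2$, and then exhibit the generators of $G_i/{\sim}$ claimed in the statement. Since all seventeen cases proceed by the same mechanism, it suffices to write out one representative argument (say $i=1$) in detail, and to record the computational output for the remainder in exactly the same compressed form as in Propositions \ref{prop:N1Aut} and \ref{prop:N2Aut}.

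For the detailed case, I would invoke the Bruhat decomposition of Subsection \ref{subsec:orthoDege},
\[
\tilde\gO_{\varphi}(\F_{11}) = (\gA \tilde\gT \gU \sqcup \gA \tilde\gT \gU M_{\rm W} \gU)\, \gV,
\]
which with $\gA = \{1_4, M_{\gA}\}$ breaks $\tilde\gO_\varphi(\F_{11})$ into the four strata $\Omega_1,\ldots,\Omega_4$ parametrized by the matrices $g_1, g_2, g_3, g_4$ built from $\tilde T(a_1,a_2,a_3)$, $U(b)$, $U(c)$ and $V(d,e_1,e_2,e_3)$ described in Subsection \ref{subsec:compaut}. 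Here $a_1,a_2,a_3,b,c,d,e_1,e_2,e_3$ are treated as indeterminates. For each stratum I would run Main algorithm (Algorithm \ref{alg:AutoN1}) on input $(Q^{\rm (Dege)}, P_i^{\rm (Dege)}, 11, 11)$, setting $\mathcal{G}_Q = \{g_1,g_2,g_3,g_4\}$ and
\[
\mathcal{P}_Q = \{\, a_1 a_2 - 1,\ a_3 s - 1,\ d t - 1,\ r - 1 \,\},
\]
where the final relation $r=1$ is legitimate because the cube map is surjective on $\F_{11}^\times$. The zero-dimensional multivariate systems obtained by adjoining the coefficient equations of $(g_j\cdot P_i^{\rm (Dege)} - r P_i^{\rm (Dege)}) \bmod Q^{\rm (Dege)}$ are solved by Algorithm \ref{alg:VarietyAC1} with respect to the grevlex order $a_1 \succ a_2 \succ a_3 \succ s \succ b \succ c \succ d \succ t \succ e_1 \succ e_2 \succ e_3 \succ r$; the union of the four solution sets, pushed back through the parametrizations, yields the full set $G_i$.

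Having enumerated $G_i$ explicitly as a finite subset of $\GL_4(\F_{11})$, the remaining task for case $i=1$ is to verify that the two listed matrices $\mathrm{diag}(1,1,-1,1)$ and the permutation-type matrix swapping $y \leftrightarrow w$ both lie in $G_1/{\sim}$, that each has order $2$, that they commute, and that the subgroup they generate already exhausts the four cosets of $G_1/{\sim}$; the last point follows at once by comparing cardinalities, since the output of Algorithm \ref{alg:AutoN1} has exactly $4$ elements up to $\sim$. Applying the same procedure to $i=2,\ldots,17$ produces the generators and orders tabulated in the statement. The main obstacle I anticipate is purely one of computational bookkeeping: the (Dege) stratum has more free parameters (nine) than either (N1) or (N2), so the Gr\"obner basis computations in Algorithm \ref{alg:VarietyAC1} are heavier, and for the generic curves $C_6^{\rm (Dege)}$ and $C_8^{\rm (Dege)}$ one must be careful to confirm that the only solution corresponds to the scalar class $g \sim 1_4$ (so that $\Aut_{\F_{11}} = \{e\}$) rather than a missed component. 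Confirmation of the abstract group structure of $G_i/\sim$ in each case is then deferred to Theorem \ref{theo:DegenerateAut} via the heuristic described in Remark \ref{rem:isogroup}.
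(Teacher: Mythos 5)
Your proposal mirrors the paper's own proof: the same Bruhat decomposition of $\tilde{\gO}_{\varphi}(\F_{11})$ into the four strata $\Omega_1,\ldots,\Omega_4$, the same parametrizations $g_1,\ldots,g_4$, the same choice of $\mathcal{P}_Q = \{a_1a_2-1,\ a_3s-1,\ dt-1,\ r-1\}$ with the observation that $r\equiv 1$ is harmless since cubing is surjective on $\F_{11}^\times$, the same grevlex order, and the same reduction to Algorithm \ref{alg:VarietyAC1} followed by a cardinality/generation check of $G_i/{\sim}$. This matches the paper's argument for case (1) and its assertion that cases (2)--(17) are handled identically.
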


\begin{proof}
We prove only the statement (1) since the other cases (2) -- (17) are proved in ways similar to (1).
Let $\varphi$ denote the coefficient matrix associated to $Q:=Q^{\rm (Dege)}$.
In the following, we compute $G_1:= \{\, g \in \tilde{\gO}_{\varphi}(\mathbb{F}_{11}) \mid g \cdot P_1^{\rm (Dege)} \equiv r P_1^{\rm (Dege)} \bmod{Q} \mbox{ for some } r \in \mathbb{F}_{11}^{\times} \,\}$, and then find generators of $G_1 / {\sim} \cong \Aut_{\mathbb{F}_{11}} ( C_1^{\rm (Dege)} )$, where $g \sim cg$ for $g \in \tilde{\gO}_{\varphi}(\mathbb{F}_{11})$ and $c \in \mathbb{F}_{11}^{\times}$.
Recall from Subsection \ref{subsec:orthoDege} that we have the Bruhat decomposition of the orthogonal similitude group associated to $Q$, say
\begin{eqnarray}
 \tilde{\gO}_{\varphi}( \mathbb{F}_{11})= ({\rm A} \tilde{\rm T} {\rm U} \sqcup {\rm A} \tilde{\rm T} {\rm U} M_{\rm W} \gU) \gV, \label{eq:BruhatDege_pf}
\end{eqnarray}
where $\gA  = \{ 1_4, M_{\rm A} \}$, $\tilde{\gT}$, $M_{\rm W}$, $\gU$ and $\gV$ are the same as in Subsection \ref{subsec:orthoDege}.
Putting
\begin{equation*}
\begin{array}{llll}
{\rm \Omega}_1 := \tilde{\rm T} \gU \gV, & {\rm \Omega}_2 := \tilde{\rm T} {\rm U} M_{\rm W} {\rm U} {\rm V}, & {\rm \Omega}_3 := M_{\rm A} \tilde{\rm T} {\rm U} {\rm V}, & {\rm \Omega}_4 := M_{\rm A} \tilde{\rm T} {\rm U} M_{\rm W} {\rm U} {\rm V},
\end{array}
\end{equation*}
we have $ \tilde\gO_{\varphi}( \mathbb{F}_{11}) = \bigcup_{i=1}^4 \Omega_i$.
Using the same notation as in Subsection \ref{subsec:compaut}, we set \begin{eqnarray}
g_1 &:=& \tilde{T} (a_1,a_2, a_3) U (b) V (d, e_1, e_2, e_3) , \nonumber \\
g_2 &:=& \tilde{T} (a_1,a_2, a_3) U (b) M_{\rm W} U (c) V (d, e_1, e_2, e_3), \nonumber \\
g_3 &:=& M_{\rm A} \tilde{T} (a_1,a_2, a_3) U (b) V (d, e_1, e_2, e_3), \nonumber \\
g_4 &:=& M_{\rm A} \tilde{T} (a_1,a_2, a_3) U (b) M_{\rm W} U (c) V (d, e_1, e_2, e_3), \nonumber
\end{eqnarray}
where $a_1$, $a_2$, $a_3$, $b$, $c$, $d$, $e_1$, $e_2$ and $e_3$ are variables.
For the inputs $P_1^{\rm (Dege)}$, $q=11$ and $q^{\prime}=q$, we execute {\rm Main algorithm} (its pseudocode is given in Algorithm \ref{alg:AutoN1}) in Subsection \ref{subsec:compaut}.
In Main algorithm, we set $\mathcal{G}_Q = \{ g_i \mid 1 \leq i \leq 4 \}$.
For solving multivariate systems in the algorithm, we use Algorithm \ref{alg:VarietyAC1} as a subroutine, and adopt the grevlex order with
\[
a_1 \succ a_2 \succ a_3 \succ s \succ b \succ c \succ d \succ t \succ e_1 \succ e_2 \succ e_3 \succ r.
\]
Moreover, we set $\mathcal{P}_Q:=\{ a_1 a_2 - 1, a_3 s - 1, d t - 1, r-1 \}$ in Main algorithm, where $s$, $t$ and $r$ are extra variables.
Namely, we add $a_1 a_2=1$, $a_3 s = 1$, $d t=1$ and $r=1$ to each multivariate system.
Note that we may assume $r \equiv 1$ since the $3$-rd power map over $\mathbb{F}_{11}$ is surjective.
From the output of our computation, we obtain all elements $g$ in $G_1 = \{\, g \in \tilde{\gO}_{\varphi}(\mathbb{F}_{11}) \mid g \cdot P_1^{\rm (Dege)} \equiv r P_1^{\rm (Dege)} \bmod{Q} \mbox{ for some } r \in \mathbb{F}_{11}^{\times} \,\}$.
The set of the computed elements $g$ includes the matrices
\[
{\diag}(1,1,-1,1),
\quad \text{and} \quad
\begin{pmatrix}
1&0&0&0\\
0&0&0&1\\
0&0&1&0\\
0&1&0&0
\end{pmatrix},
\]
whose orders are $2$ and $2$ respectively.
It is computationally checked that these two matrices generate $G_1 / {\sim}$.
\end{proof}

\subsubsection{Computational results over the algebraic closure $\overline{\mathbb{F}_{11}}$}\label{subsubsec:compAC}

\begin{prop}\label{prop:ACAut}
Let $Q^{\rm (N1)}= 2 x w + 2 y z$, and $Q^{\rm (Dege)}= 2 y w + z^2$ in $\mathbb{F}_{11}[x,y,z,w]$.
For each $1 \leq i \leq 3$, we denote by $C_i$ the superspecial curve $V ( Q^{\rm (N1)}, P_i^{\rm (alc)})$ over $\overline{\mathbb{F}_{11}}$ defined by $Q^{\rm (N1)}$ and $P_i^{\rm (alc)}$.
For each $4 \leq j \leq 9$, we denote by $C_j $ the superspecial curve $V ( Q^{\rm (Dege)}, P_j^{\rm (alc)})$ over $\overline{\mathbb{F}_{11}}$ defined by $Q^{\rm (Dege)}$ and $P_j^{\rm (alc)}$.
Then we have the following:
\begin{enumerate}
\item[$(1)$] The group $\Aut ( C_1 )$ has order $12$ and it is generated by
\[
\begin{pmatrix}
6&3&5&3\\
1&6&1&5\\
5&3&6&3\\
1&5&1&6
\end{pmatrix},
\quad \text{and} \quad
\begin{pmatrix}
0&6&0&10\\
0&8&0&7\\
1&0&2&0\\
6&0&3&0
\end{pmatrix},
\]
whose orders are $2$ and $6$ respectively.
\item[$(2)$] The group $\Aut ( C_2 )$ has order $4$ and it is generated by
\[
{\diag}(-1,1,1,-1),
\quad \text{and} \quad
\begin{pmatrix}
0&0&0&\zeta^{-6} \\
0&0&7&0\\
0&8&0&0\\
\zeta^{6}&0&0&0
\end{pmatrix},
\]
whose orders are $2$ and $2$ respectively.
\item[$(3)$] The group $\Aut ( C_3 )$ has order $24$ and it is generated by
\[
\begin{pmatrix}
0&0&5&2\\
7&6&9&8\\
0&0&8&9\\
2&6&1&8
\end{pmatrix},
\quad \text{and} \quad
\begin{pmatrix}
9&5&2&5 \\
1&3&3&2\\
8&5&3&5\\
4&8&1&9
\end{pmatrix},
\]
whose orders are $4$ and $2$ respectively.
\item[$(4)$] The group $\Aut (C_4 )$ has order $36$ and it is generated by
\[
\begin{pmatrix}
1&0&0&0\\
0&2&2&10\\
0&6&5&9\\
0&2&5&2
\end{pmatrix},
\quad
{\diag}(1,\zeta^{80},\zeta^{80},\zeta^{80}),
\quad \text{and} \quad
\begin{pmatrix}
1&0&0&0\\
0&2&9&10\\
0&6&6&9\\
0&2&6&2
\end{pmatrix},
\]
whose orders are $6$, $3$ and $2$ respectively.
\item[$(5)$] The group $\Aut ( C_5 )$ has order $72$ and it is generated by
\[
\begin{pmatrix}
1&0&0&0\\
0&5&2&4\\
0&8&0&9\\
0&9&3&5
\end{pmatrix},
\quad
\begin{pmatrix}
1&0&0&0\\
0&3&9&3\\
0&6&6&9\\
0&5&6&3
\end{pmatrix},
\quad \text{and} \quad
{\diag}(1,\zeta^{80},\zeta^{80},\zeta^{80}),
\]
whose orders are $4$, $2$ and $3$ respectively.
\item[$(6)$] The group $\Aut ( C_6 )$ has order $12$ and it is generated by ${\diag}(1,\zeta^{80},\zeta^{70},-1)$.
\item[$(7)$] The group $\Aut ( C_7 )$ has order $3$ and it is generated by ${\diag}(1,1,\zeta^{-40},\zeta^{40})$.
\item[$(8)$] The group $\Aut ( C_8 )$ has order $12$ and it is generated by
\[
\begin{pmatrix}
1&0&0&0\\
0&\zeta^{4}&\zeta^{-40}&7\\
0&\zeta^{-8}&\zeta^{44}&1\\
0&\zeta^{28}&\zeta^{32}&7
\end{pmatrix},
\quad \text{and} \quad
\begin{pmatrix}
1&0&0&0 \\
0&7&\zeta^{40}&\zeta^{44}\\
0&\zeta^{32}&7&\zeta^{40}\\
0&\zeta^{28}&\zeta^{32}&7
\end{pmatrix},
\]
whose orders are $3$ and $2$ respectively.
\item[$(9)$] The group $\Aut ( C_9 )$ has order $3$ and it is generated by ${\diag}(1,\zeta^{40},\zeta^{-40},1)$.
\end{enumerate}
Here $\zeta$ is a root of $a^2 + 7 a + 2$, which is a primitive element of $\mathbb{F}_{121}$.
\end{prop}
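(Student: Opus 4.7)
The plan is to follow exactly the same computational scheme that proved Propositions \ref{prop:N1Aut}, \ref{prop:N2Aut} and \ref{prop:DegeAut}, with the single change that Main algorithm (pseudocode Algorithm \ref{alg:AutoN1}) is now invoked with parameter $q'=0$ rather than $q'=q$, so that the subroutine Algorithm \ref{alg:VarietyAC1} solves the resulting zero-dimensional ideals over $\overline{\mathbb{F}_{11}}$ instead of over $\mathbb{F}_{11}$. For each $1\le i\le 3$ the curve $C_i=V(Q^{({\rm N1})},P_i^{({\rm alc})})$ is of type \textbf{(N1)}, and for each $4\le j\le 9$ the curve $C_j=V(Q^{({\rm Dege})},P_j^{({\rm alc})})$ is of type \textbf{(Dege)}. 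In each case we expand $\tilde{\gO}_{\varphi}(\overline{\mathbb{F}_{11}})=\bigcup_k\Omega_k$ via the Bruhat decomposition recalled in Subsections \ref{subsec:orthoN1} and \ref{subsec:orthoDege}, and parametrize the representatives $g_k$ with the same set of variables $\mathcal{G}_Q$ introduced in Subsection \ref{subsec:compaut}.

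Concretely, I would construct, for each pair $(C_i,g_k)$, the polynomial system obtained by reducing $g_k\cdot P_i^{({\rm alc})}-rP_i^{({\rm alc})}$ modulo $Q$ and extracting its monomial coefficients, and then adjoin the auxiliary generators of $\mathcal{P}_Q$ that enforce the appropriate Bruhat-torus invertibility conditions. Since the cube map is surjective on $\overline{\mathbb{F}_{11}}^{\times}$, one may again normalize $r=1$ rather than introducing the usual extra variable $v$ with $rv=1$. Applying Algorithm \ref{alg:VarietyAC1} with the grevlex order chosen as in the previous three propositions, one successively computes splitting fields of the univariate polynomials that appear in the elimination tower; the single algebraic extension that suffices in all nine cases is $\mathbb{F}_{121}=\mathbb{F}_{11}(\zeta)$ with $\zeta$ a root of $a^2+7a+2$. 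Substituting each root tuple back into the corresponding $g_k$ yields the complete set $G_i\subset \tilde{\gO}_{\varphi}(\mathbb{F}_{121})$.

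To finish, from each $G_i/\!\!\sim$ I would select matrices of the orders advertised in the statement and verify, by explicit multiplication inside $\GL_4(\mathbb{F}_{121})$, that the group they generate has the predicted cardinality; equality with $G_i/\!\!\sim$ then follows from a cardinality count against the total number of representatives returned by the algorithm. Once the orders are pinned down, the isomorphism type of the generated subgroup is identified by the heuristic of Remark \ref{rem:isogroup}, which in particular distinguishes $\mathrm{C}_{12}$ from $\mathrm{D}_6$ and from $\mathrm{A}_4$ in the cases $(6)$ and $(8)$ where all three orders coincide modulo factors of $3$. Note that cases $(1)$, $(3)$ and $(10)$--$(12)$ of the \textbf{(N1)} and \textbf{(Dege)} propositions already produced generators defined over $\mathbb{F}_{11}$ whose orders match those asserted here, so in parts $(1)$, $(3)$ and $(4),(5)$ (except for the central element $\mathrm{diag}(1,\zeta^{80},\zeta^{80},\zeta^{80})$ of order $3$) the generators may simply be recycled from the corresponding $\mathbb{F}_{11}$-forms.

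The main obstacle I anticipate is purely computational: the Gr\"{o}bner basis step in Algorithm \ref{alg:VarietyAC1} grows substantially when the base field is enlarged to $\mathbb{F}_{121}$ and when the Bruhat cell $\Omega_k$ involves many free parameters, particularly in the \textbf{(Dege)} case where $\gV$ contributes four extra variables $d,e_1,e_2,e_3$. A secondary difficulty is the bookkeeping of central scalar automorphisms coming from the $\mathbb{F}_{121}$-rational cube roots of unity, which enlarge $\Aut(C_j)$ by a factor of $3$ over $\Aut_{\mathbb{F}_{11}}(C_j^{\prime})$ in cases $(4)$, $(5)$, $(6)$ and $(8)$; extracting these extra generators (of the form $\mathrm{diag}(1,\zeta^{80},\zeta^{80},\zeta^{80})$ etc.) requires correctly identifying which entries of the torus factor $\tilde\gT$ are allowed to take values in $\mathbb{F}_{121}^{\times}\setminus\mathbb{F}_{11}^{\times}$ once the normalization $r=1$ is imposed.
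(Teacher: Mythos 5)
Your proposal matches the paper's proof essentially verbatim: invoke Main algorithm (Algorithm \ref{alg:AutoN1}) with $q'=0$ so that Algorithm \ref{alg:VarietyAC1} is run over $\overline{\mathbb{F}_{11}}$, use the \textbf{(N1)} Bruhat decomposition for $C_1,\dots,C_3$ and the \textbf{(Dege)} one for $C_4,\dots,C_9$ with $\mathcal{P}_Q$ including the normalization $r=1$, and then select and verify generators of the stated orders from the returned set $G_i/\!\sim$. Your side remarks about recycling $\mathbb{F}_{11}$-rational generators from Propositions \ref{prop:N1Aut} and \ref{prop:DegeAut} and about the extra central $\mathbb{F}_{121}$-scalars of order $3$ are accurate but do not change the method.
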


\begin{proof}
We prove only the statement (1) since the other cases (2) -- (9) are proved in ways similar to (1).
Let $\varphi$ denote the coefficient matrix associated to $Q:=Q^{\rm (N1)}$.
In the following, we compute $G_1:= \{\, g \in \tilde{\gO}_{\varphi}(\overline{\mathbb{F}_{11}}) \mid g \cdot P_1^{\rm (alc)} \equiv r P_1^{\rm (alc)} \bmod{Q} \mbox{ for some } r \in \overline{\mathbb{F}_{11}}^{\times} \,\}$, and then find generators of $G_1 / {\sim} \cong \Aut ( C_1^{\rm (alc)} )$, where $g \sim cg$ for $g \in \tilde{\gO}_{\varphi}(\overline{\mathbb{F}_{11}})$ and $c \in \overline{\mathbb{F}_{11}}^{\times}$.
Recall from Subsection \ref{subsec:orthoN1} that we have the Bruhat decomposition of the orthogonal similitude group associated to $Q$, say
\begin{eqnarray}
 \tilde\gO_{\varphi}( \overline{\mathbb{F}_{11}})= \gA \tilde \gT \gU \gW \gU ,\label{eq:BruhatN1_pf2}
\end{eqnarray}
where $\gA = \{ 1_4, M_{\rm A} \}$, $\tilde\gT$, $\gU$ and $\gW = \{ 1_4, s_1, s_2, s_1 s_2 \}$ are the same as in Subsection \ref{subsec:orthoN1}.
Putting
\begin{equation*}
\begin{array}{llll}
\Omega_1 := \tilde{\gT} \gU \gU, & \Omega_2 := \tilde{\gT} \gU s_1 \gU, & \Omega_3 := \tilde{\gT} \gU s_2 \gU, & \Omega_4 := \tilde{\gT} \gU s_1 s_2 \gU, \\
\Omega_5 := M_{\gA} \tilde{\gT} \gU \gU, & \Omega_6 := M_{\gA} \tilde{\gT} \gU s_1 \gU, & \Omega_7 := M_{\gA} \tilde{\gT} \gU s_2 \gU, & \Omega_8 := M_{\gA} \tilde{\gT} \gU s_1 s_2 \gU ,
\end{array}
\end{equation*}
we have $ \tilde\gO_{\varphi}( \overline{\mathbb{F}_{11}}) = \bigcup_{i=1}^8 \Omega_i$.
Using the same notation as in Subsection \ref{subsec:compaut}, we set \begin{eqnarray}
g_1 &:=& \tilde{T} (a_1,a_2, b_1,b_2,c) U (d_1, d_2 ), \nonumber \\
g_2 &:=& \tilde{T} (a_1,a_2, b_1,b_2,c) U (d_1, d_2) s_1 U_1 (e_1 ), \nonumber \\
g_3 &:=& \tilde{T} (a_1,a_2, b_1,b_2,c) U (d_1, d_2) s_2 U_2 (e_2), \nonumber \\
g_4 &:=& \tilde{T} (a_1,a_2, b_1,b_2,c) U (d_1, d_2) s_1 s_2 U (e_1, e_2 ), \nonumber \\
g_5 &:=& M_{\gA} \tilde{T} (a_1,a_2, b_1,b_2,c) U (d_1, d_2 ), \nonumber \\
g_6 &:=& M_{\gA} \tilde{T} (a_1,a_2, b_1,b_2,c) U (d_1, d_2) s_1 U_1 (e_1), \nonumber \\
g_7 &:=& M_{\gA} \tilde{T} (a_1,a_2, b_1,b_2,c) U (d_1, d_2) s_2 U_2 (e_2), \nonumber \\
g_8 &:=& M_{\gA} \tilde{T} (a_1,a_2, b_1,b_2,c) U (d_1, d_2) s_1 s_2 U (e_1, e_2 ). \nonumber 
\end{eqnarray}
For the inputs $P_1^{\rm (alc)}$, $q=11$ and $q^{\prime}=0$, we execute {\rm Main algorithm} (its pseudocode is given in Algorithm \ref{alg:AutoN1}) in Subsection \ref{subsec:compaut}.
In Main algorithm, we set $\mathcal{G}_Q = \{ g_i \mid 1 \leq i \leq 8 \}$.
For solving multivariate systems in the algorithm, we use Algorithm \ref{alg:VarietyAC1} as a subroutine, and adopt the grevlex order with
\[
a_1 \succ a_2 \succ b_1 \succ b_2 \succ c \succ s \succ d_1 \succ d_2 \succ e_1 \succ e_2 \succ r.
\]
Moreover, we set $\mathcal{P}_Q:=\{ a_1 a_2 - 1, b_1 b_2 - 1, c s - 1, r-1 \}$ in Main algorithm, where $s$ and $r$ are extra variables.
Namely, we add $a_1 a_2=1$, $b_1 b_2=1$, $c s = 1$ and $r=1$ to each multivariate system.
From the output of our computation, we obtain all elements $g$ in $G_1 = \{\, g \in \tilde{\gO}_{\varphi}(\overline{\mathbb{F}_{11}}) \mid g \cdot P_1^{\rm (alc)} \equiv r P_1^{\rm (alc)} \bmod{Q} \mbox{ for some } r \in {\overline{\mathbb{F}_{11}}}^{\times} \,\}$.
The set of the computed elements $g$ includes the matrices
\[
\begin{pmatrix}
6&3&5&3\\
1&6&1&5\\
5&3&6&3\\
1&5&1&6
\end{pmatrix},
\quad \text{and} \quad
\begin{pmatrix}
0&6&0&10\\
0&8&0&7\\
1&0&2&0\\
6&0&3&0
\end{pmatrix},
\]
whose orders are $2$ and $6$ respectively.
It is computationally checked that these two matrices generate $G_1 / {\sim}\cong \Aut (C_1^{\rm (alc)} )$.
\end{proof}

\section{Compatibility with the Galois cohomology theory}

In this section, we enumerate nonhyperelliptic superspecial curves over $\mathbb{F}_{11}$, using Galois cohomology theory.
However this enumeration requires the
data of isomorphism classes of nonhyperelliptic superspecial curves over
$\overline{\mathbb{F}_{11}}$ obtained in Corollary \ref{cor:KH17overF11} and Theorem \ref{theo:AlcAut}.
To be precise, what we show in this section is
that any $\F_{11}$-form of any curve in the list of Theorem \ref{thm:KH17overF11}
already appeared in the list.

Let $\Gamma$ denote the absolute Galois group $\mathrm{Gal}(\overline{\mathbb{F}_{11}} / \mathbb{F}_{11})$.
Let $C$ be a nonhyperelliptic superspecial curve over $\mathbb{F}_{11}$.
It is well-known (cf. \cite[Chap.III, \S 1.1, Prop. 1]{S}) that the first Galois cohomology $H^1 ( \Gamma, \mathrm{Aut} (C ) )$
parametrizes $\mathbb{F}_{11}$-forms of $C$,
where $\mathrm{Aut}(C)$ is the automorphism group over $\overline{\mathbb{F}_{11}}$.
Let $\sigma$ be the Frobenius on $\mathrm{Aut}(C)$.
The Galois cohomology is described as
\begin{equation}\label{GaloisCoh-Aut}
H^1 ( \Gamma, \mathrm{Aut} (C ) ) \cong \mathrm{Aut}(C ) / \sigma\text{-conjugacy} ,
\end{equation}
where two elements $a,b$ of $\mathrm{Aut}(C)$ are said to be
{\it $\sigma$-conjugate} if $a = g^{-1}bg^\sigma$ for some $g\in\mathrm{Aut}(C )$.
Let $a$ be an element of $\mathrm{Aut}(C )$.
Let $C^{(a)}$ be the $\F_{11}$-form associated to $a$ via \eqref{GaloisCoh-Aut}.
If $F$ is the Frobenius map on $C$, then the Frobenius on $C^{(a)}$
is given by $aF$ via an isomorphism
from $C\otimes \overline{\F_{11}}$ to $C^{(a)}\otimes \overline{\F_{11}}$.
Then $\Aut_{\F_{11}}\left(C^{(a)}\right)$ is bijective to the set
of $g\in\Aut(C)$ satisfying $g (aF) = (a F) g$. Hence we get
\[
\Aut_{\F_{11}}\left(C^{(a)}\right) \simeq \{g\in\mathrm{Aut}(C)\mid a = g^{-1}ag^\sigma\}.
\]
The RHS is just the $\sigma$-stabilizer group
$\sigma\text{-}{\rm Stab}_{\Aut(C)}(a)$ of $a$
in $\mathrm{Aut}(C)$, by definition.
Using Theorem \ref{theo:AlcAut} together with explicit representation by matrices (Proposition \ref{prop:ACAut}), the cardinality of $\mathrm{Aut}(C ) / \sigma\text{-conjugacy}$ for each $C_i$, and the orders of the $\sigma$-stabilizer groups of representatives of $\sigma$-conjugacy classes are computed as follows:
\begin{enumerate}
\item $|\mathrm{Aut}(C_1 ) / \sigma\text{-conjugacy}| = 6$
and the orders of the $\sigma$-stabilizer groups are $12, 6, 4, 6, 4, 12$;
\item  $|\mathrm{Aut}(C_2 ) / \sigma\text{-conjugacy}| = 2$
and the orders of the $\sigma$-stabilizer groups are $2,2$;
\item  $|\mathrm{Aut}(C_3 ) / \sigma\text{-conjugacy}| = 5$
and the orders of the $\sigma$-stabilizer groups are $24,3,8,4,4$;
\item  $|\mathrm{Aut}(C_4 ) / \sigma\text{-conjugacy}| = 6$
and the orders of the $\sigma$-stabilizer groups are $12, 4, 12, 4, 6, 6$;
\item  $|\mathrm{Aut}(C_5 ) / \sigma\text{-conjugacy}| = 5$
and the orders of the $\sigma$-stabilizer groups are $24, 4, 8, 3, 4$;
\item  $|\mathrm{Aut}(C_6 ) / \sigma\text{-conjugacy}| = 2$
and the orders of the $\sigma$-stabilizer groups are $2,2$;
\item  $|\mathrm{Aut}(C_7 ) / \sigma\text{-conjugacy}| = 1$
and the order of the $\sigma$-stabilizer group is $1$;
\item  $|\mathrm{Aut}(C_8 ) / \sigma\text{-conjugacy}| = 2$
and the orders of the $\sigma$-stabilizer groups are $2,2$;
\item  $|\mathrm{Aut}(C_9 ) / \sigma\text{-conjugacy}| = 1$
and the order of the $\sigma$-stabilizer group is $1$.
\end{enumerate}
Here we do not use Theorems \ref{thm:KH17overF11}.
One can see that the number of $\mathrm{Aut}(C_i) / \sigma\text{-conjugacy}$
coincides with the number of $\F_{11}$-forms of $C_i$
for each $i=1,2,\ldots, 9$ 
and that
the orders of the $\sigma$-stabilizer groups are exactly the same as those
of automorphism groups over $\F_{11}$ obtained in 
Propositions \ref{prop:N1Aut} -- \ref{prop:DegeAut}.
These support the correctness of our computational enumeration
proving Theorems \ref{thm:KH17overF11}.

For example, we first consider the case of $C_1$.
In this case we have $\Aut(C_1)\simeq \gD_{6}$
by Theorem \ref{theo:AlcAut} (1) and
any automorphism is defined over $\F_{11}$, i.e., $\sigma$ is trivial
by Proposition \ref{prop:ACAut} (1).
We know that the number of conjugacy classes of $\gD_{6}$ is $6$
and the orders of the stabilizer groups
are listed as above.
Next we consider the case of $C_8$. 
Recall $\Aut(C_8) \simeq \gA_4$.
By a straightforward computation,
as representatives of $\sigma$-conjugacy classes, we can take
the identity matrix and the second matrix in Proposition \ref{prop:ACAut} (8).
These two have the same $\sigma$-stabilizer group, which  is the cyclic group of order $2$ generated by
\[
\begin{pmatrix}
1 & 0 & 0& 0 \\
0 & 7 & 1& 7 \\
0 & 9 & 7& 1 \\
0 & 6 & 9& 7
\end{pmatrix}.
\]
The other cases are also studied in the same way.
For such explicit descriptions in all the cases,
see the web page \cite{KudoHP} of the first author, with codes producing these results.


\begin{thebibliography}{99}

\bibitem{MagmaHP}
Cannon, J., et al.: \textit{Magma A Computer Algebra System},
School of Mathematics and Statistics, University of Sydney, 2016.\ \texttt{http://magma.maths.usyd.edu.au/magma/}

\bibitem{Hess}
Hess, F.: \textit{An algorithm for computing Isomorphisms of Algebraic Function Field}, Algorithmic number theory, 26371, Lecture Notes in Comput. Sci., {\bf 3076}, Springer, Berlin, 2004. 

\bibitem{Ibukiyama}
Ibukiyama, T.:
{\it On rational points of curves of genus $3$ over finite fields},
Tohoku Math. J. {\bf 45} (1993), 311--329.


\bibitem{KH16}
Kudo, M.\ and Harashita, S.: \textit{Superspecial curves of genus $4$ in small characteristic}, Finite Fields and Their Applications {\bf 45}, 131--169, 2017.

\bibitem{KH17}
Kudo, M.\ and Harashita, S.: \textit{Enumerating superspecial curves of genus $4$ over prime fields}, arXiv:1702.05313, 2017.

\bibitem{KH17a}
Kudo, M.\ and Harashita, S.: \textit{Enumerating Superspecial Curves of Genus $4$ over Prime Fields} (abstract version of \cite{KH17}), In: Proceedings of The Tenth International Workshop on Coding and Cryptography 2017 (WCC2017), September 18-22, 2017, Saint-Petersburg, Russia, available at\\
\texttt{http://wcc2017.suai.ru/proceedings.html}

\bibitem{S}Serre, J.-P.: \textit{Galois cohomology}, Translated from the French by Patrick Ion and revised by the author. Springer-Verlag, Berlin, 1997.

\bibitem{Stichtenoth}
Stichtenoth, H.: \textit{\"{U}ber die Automorphismengruppe eines algebraischen Funktionenk\"{o}rpers von Prim zahlcharacteristik}, Teil I, Arch. Math. {\bf 24}, 527--544, 1973.

\bibitem{KudoHP} Computation programs and log files for the paper ``Automorphism groups of superspecial curves of genus 4 over $\mathbb{F}_{11}$'', available on the web page\\
\texttt{http://www2.math.kyushu-u.ac.jp/\~{}m-kudo/kudo-harashita-senda-comp.html}

\end{thebibliography}
\end{document}